\theoremstyle{plain}
\newtheorem{thm}{Theorem}[section]
\newtheorem{cor}[thm]{Corollary}
\newtheorem{lem}[thm]{Lemma}
\newtheorem{prop}[thm]{Proposition}
\theoremstyle{definition}
\newtheorem{problem}[thm]{Problem}
\theoremstyle{remark}
\newtheorem{rem}[thm]{Remark}
\newcommand{\eqd}{:=}
\newcommand{\s}{\vartheta}
\newcommand{\n}{m}
\newcommand{\Erf}{\operatorname{erf}}
\newcommand{\hatR}{V}
\newcommand{\ErfN}{\Erf(2\sqrt{\n}\hatR)}
\newcommand{\ExpN}{\operatorname{e}^{-2\n \hatR^2}}
\newcommand{\Pot}{\mathscr{U}_1}
\newcommand{\PotV}{\mathscr{U}_0}
\newcommand{\PotB}{\mathscr{B}}
\newcommand{\PotA}{\mathscr{A}}
\newcommand{\Potu}{\mathscr{U}}
\newcommand{\vDelta}{\varDelta} 
\newcommand{\dbar}{\bar\partial}
\newcommand{\coeffH}{\widehat{H}}
\newcommand{\coeffG}{\widehat{G}}
\newcommand{\coeffh}{\widehat{h}}
\newcommand{\coeffE}{\widehat{E}}
\newcommand{\coeffcalE}{\widehat{\mathcal{E}}}
\newcommand{\coeffL}{\widehat{L}}
\newcommand{\coeffSop}{\widehat{\Sop}}
\newcommand{\E}{\mathcal{E}}
\newcommand{\C}{\mathbb{C}}
\newcommand{\D}{\mathbb{D}}
\newcommand{\T}{\mathbb{T}}
\newcommand{\N}{\mathbb{N}}
\newcommand{\R}{\mathbb{R}}
\newcommand{\A}{\mathbb{A}}
\newcommand{\calS}{\mathcal{S}}
\newcommand{\calE}{\mathcal{E}}
\newcommand{\diffA}{\mathrm{dA}}
\newcommand{\diffs}{\mathrm{ds}}
\newcommand{\diff}{\mathrm{d}}
\newcommand{\Pop}{\mathbf{P}}
\newcommand{\Tope}{\mathbf{T}}
\newcommand{\Sop}{\mathbf{S}}
\newcommand{\Iop}{\mathbf{I}}
\newcommand{\Rop}{\mathbf{R}}
\newcommand{\say}[1]{``#1''}
\newcommand{\e}{\mathrm{e}}
\newcommand{\Ordo}{\mathrm{O}}
\newcommand{\ordo}{\mathrm{o}}
\newcommand{\imag}{\mathrm{i}}
\renewcommand{\Re}{\mathrm{Re}\,}
\renewcommand{\hat}{\widehat}
\newcommand{\circledQ}{Q^\circledast}
\numberwithin{equation}{section}
\begin{document}

\title[Berezin density and planar orthogonal polynomials]
{Berezin density and planar orthogonal polynomials}

\author{Haakan Hedenmalm}

\address{Department of Mathematics, KTH Royal Institute of Technology, 
Stockholm, S -- 100 44 Stockholm, Sweden}
\email{haakanh@math.kth.se}

\author{Aron Wennman}
\address{Department of Mathematics, KTH Royal Institute of Technology, 
Stockholm, S -- 100 44 Stockholm, Sweden}
\email{aronw@math.kth.se}

\keywords{Bergman kernel,
Riemann-Hilbert problem, planar orthogonal polynomials}

\subjclass{Primary 42C05, 35C20, 41A60, secondary 35Q15, 35B40, 60B20}

\begin{abstract}
We introduce a nonlinear potential theory problem for the Laplacian, 
the solution of which characterizes 
the Berezin density $B(z,\cdot)$ for the polynomial Bergman space,
where the point $z\in\C$ is fixed.
When $z=\infty$, the Berezin density is
expressed in terms of the squared modulus of the corresponding 
normalized orthogonal polynomial $P$.
We use an approximate version of this characterization to 
study the asymptotics of the orthogonal polynomials
in the context of exponentially varying weights.
This builds on earlier works by Its-Takhtajan and by the first author on a 
soft Riemann-Hilbert problem for planar orthogonal polynomials,
where in place of the Laplacian we have the $\bar\partial$-operator.
We adapt the soft Riemann-Hilbert approach to 
the nonlinear potential problem, where the nonlinearity is 
due to the appearance of $|P|^2$ in place of $\overline{P}$.  
Moreover, we suggest how to adapt the potential theory method to the study of
the asymptotics of more general Berezin densities $B(z,w)$
in the off-spectral regime, that is, when $z$ is fixed
outside the droplet.
This is a first installment in a program to obtain an explicit global expansion 
formula for the polynomial Bergman kernel, and, in particular, of the one-point function
of the associated random normal matrix ensemble.
\end{abstract}
\maketitle

\section{Overview}
We consider a novel potential theoretic problem involving the Laplacian
and the Berezin density associated with a polynomial Bergman kernel
for exponentially varying weights in the plane. The Berezin density 
for the point at infinity has a particularly simple form, which allows us
to use this framework to characterize the planar orthogonal polynomials.
In particular, we supply an independent proof of the main results of
\cite{HW-ONP, HedenmalmPotential} aimed at obtaining
an asymptotic expansion of the associated 
polynomial Bergman density.
The method is reminiscent of the Riemann-Hilbert approach to orthogonal
polynomials on the real line, and is closely related to the
$\bar\partial$-approach introduced by Its-Takhtajan, and developed
further by the first-named author in \cite{HedenmalmPotential},
where it was called a \emph{soft Riemann-Hilbert problem}.
We present the potential problem for the point at infinity in \S\ref{s:potential-OP}.
In \S\ref{s:Berezin} we indicate how to modify the 
approach to analyze the Berezin density for more 
general (off-spectral) points. At the same time, we also mention an interpretation of
the Ward identity in terms of the potential for the Berezin density, and
look at the zero set of the polynomial Bergman kernel and its motion 
under holomorphic variation of the second variable. The proofs that the
potential problems indeed characterize the Berezin densities are carried
out in \S\ref{s:pf-potential-characterization}.
In \S\ref{s:approximate}, we consider an approximate potential problem 
for the asymptotic analysis of the orthogonal polynomials.
Moreover, in \S\ref{s:comput}--\ref{s:sol-master} we supply an
algorithm which obtains a formal solution in the sense of asymptotic expansions
to the potential theoretic problem. We carry this out in detail in the instance
of the Berezin density for the point at infinity, but any off-spectral point
could be analyzed in an analogous fashion.
To make the formal solution rigorous, we estimate the size of the involved quantities
in the asymptotic expansions, and use a standard $\bar\partial$-surgery
technique based on H{\"o}rmander estimates. This is carried out
in \S\ref{s:proof}--\ref{s:truncation}. 

For the convenience of the reader, we point out that the expansion formula
for the orthogonal polynomials is obtained in Theorem~\ref{thm:main-prel} and 
Theorem~\ref{thm:main}, see also Lemma~\ref{lem:truncation-h-prel} for the truncation to
a finite asymptotic expansion.

\section{Orthogonal polynomials through a potential problem}
\label{s:potential-OP}
\subsection{The exact potential problem}
In this section, we explore the potential problem for
planar orthogonal
polynomials, where the orthogonality is taken
with respect to the exponentially varying measure
$\e^{-2mQ}\diffA$
on the plane. We are interested in the asymptotics as 
the degree $n$ and the parameter $m$ grow to infinity
in a proportional fashion: $n=\tau m$. We use the notation $P=P_{m,n}$
for the $L^2$-normalized orthogonal polynomial, and $\pi=\pi_{m,n}$ for its monic
cousin. Here, $\diffA=(2\pi\imag )^{-1}\diff\bar z\wedge\diff z$ is the planar area element, 
and $Q$ is a $C^2$--smooth weight function subject to the growth condition
\begin{equation}
\label{eq:growth-Q}
\liminf_{|z|\to+\infty}\frac{Q(z)}{\log|z|}> 1,
\end{equation}
at infinity. 
We refer to \S\ref{s:assumptions} below for detailed additional assumptions and
for basic notation.

As in the classical Riemann-Hilbert approach to orthogonal
polynomials on the real line, and also in \cite{HedenmalmPotential}, 
the current method is based on the observation that the solvability of 
a certain system of PDEs determine the 
orthogonal polynomial uniquely. This system may be solved
(at least approximately) in a constructive manner,
and the resulting approximate solution 
must be close to the orthogonal polynomial in question, in a precise sense
specified below. 

\medskip

The potential problem, which inspires our approach, 
reads as follows. We use the notation 
$\vDelta=\vDelta_z=\partial_z\bar\partial_z=\frac14(\partial^2_x+\partial^2_y)$ 
for the quarter Laplacian, where
$\partial=\partial_z$ and $\bar\partial=\bar\partial_z$ are the standard Wirtinger 
derivatives, defined below in \eqref{eq:Wirtinger},
and where $z=x+\imag y$.
\begin{problem}
\label{problem:1}
Determine a pair $(\PotV,\pi)$ of functions on $\C$ such that
$\PotV$ is real-valued, and such that 
\begin{equation}\label{eq:pot-exact}
\begin{cases}
\vDelta \PotV(z)=|\pi(z)|^2\e^{-2m Q(z)},\\
\bar\partial \pi\equiv 0,\\
\pi^{-1}\partial\, \PotV\in C^2(\C),
\end{cases}
\end{equation}
with asymptotic behavior
\begin{equation}
\label{eq:growth-Upi}
\big(\PotV(z),\pi(z)\big)=
\big(\kappa^2\log|z|^2+\Ordo(|z|^{-1}),\,z^n(1+\Ordo(|z|^{-1})\big)
\end{equation}
as $|z|\to+\infty$, for some positive constant $\kappa=\kappa_{m,n}$.
\end{problem}

We have the following elementary observation.
\begin{prop}
\label{lem:exact}
If $\n$ is sufficiently large and if $n\le \n$, 
Problem~\ref{problem:1} admits a unique solution $(\PotV,\pi)$.
The function $\pi=\pi_{m,n}$ is then the $n$-th monic orthonormal
polynomial in $L^2(\C,\e^{-2m Q}\diffA)$, and $\PotV$ is the 
associated logarithmic potential
\[
\PotV(z)=\int_{\C}g(z,w)\, |\pi_{m,n}(w)|^2\e^{-2mQ(w)}\diffA(w),
\]
where $g(z,w)=\log|z-w|^2$ is the fundamental solution for the operator
$\vDelta$ with respect to the normalized area measure $\diffA$. 
Moreover, we have that
\[
\kappa_{m,n}^2=\lVert\pi_{m,n}\rVert_{2mQ}^2=
\int_{\C}|\pi_{m,n}(z)|^2\e^{-2\n Q(z)}\diffA(z).
\]
\end{prop}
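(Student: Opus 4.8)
The plan is to verify directly that the logarithmic potential $\PotV$ of the weighted polynomial $|\pi_{m,n}|^2\e^{-2mQ}$ solves Problem~\ref{problem:1}, and then to prove uniqueness by showing that any solution of the problem forces $\pi$ to be orthogonal to all lower-degree polynomials. First I would set $\mu=|\pi_{m,n}|^2\e^{-2mQ}\diffA$ and define $\PotV(z)=\int_{\C}g(z,w)\diff\mu(w)$; by the growth condition \eqref{eq:growth-Q} together with $n\le\n$ (so that the weight dominates the polynomial at infinity), $\mu$ is a finite measure and $\PotV$ is well defined. Since $\vDelta g(\cdot,w)=\delta_w$ in the normalized area measure, we get $\vDelta\PotV=|\pi_{m,n}|^2\e^{-2mQ}$ distributionally, which is also the pointwise identity because the density is continuous; this is the first equation in \eqref{eq:pot-exact}, and $\bar\partial\pi_{m,n}\equiv0$ is immediate. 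For the asymptotics \eqref{eq:growth-Upi}: $\pi_{m,n}$ is monic of degree $n$, so $\pi_{m,n}(z)=z^n(1+\Ordo(|z|^{-1}))$, while expanding $g(z,w)=\log|z|^2+\Ordo(|z|^{-1})$ for large $|z|$ gives $\PotV(z)=\|\pi_{m,n}\|_{2mQ}^2\log|z|^2+\Ordo(|z|^{-1})$, which identifies $\kappa_{m,n}^2=\|\pi_{m,n}\|_{2mQ}^2$. The only genuinely substantive point on the existence side is the third condition $\pi^{-1}\partial\PotV\in C^2(\C)$: here I would compute $\partial\PotV(z)=\int_{\C}(z-w)^{-1}|\pi_{m,n}(w)|^2\e^{-2mQ(w)}\diffA(w)$ and recognize this as a Cauchy-type transform; the claim is that the apparent singularities at the zeros of $\pi_{m,n}$ cancel, which is exactly the reproducing/orthogonality property of $\pi_{m,n}$ — writing $\pi_{m,n}(w)\overline{\pi_{m,n}(w)}=(\pi_{m,n}(w)-\pi_{m,n}(z))\overline{\pi_{m,n}(w)}+\pi_{m,n}(z)\overline{\pi_{m,n}(w)}$ splits the integral into a polynomial-in-$z$ part (since $(\pi_{m,n}(w)-\pi_{m,n}(z))/(z-w)$ is a polynomial in $w,z$) plus $\pi_{m,n}(z)$ times the Cauchy transform of $\overline{\pi_{m,n}}\e^{-2mQ}\diffA$; dividing by $\pi_{m,n}(z)$ removes the singularities and smoothness follows from smoothness of $Q$.

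Next I would establish uniqueness, which is where the orthogonality characterization really enters. Suppose $(\PotV,\pi)$ is any solution of Problem~\ref{problem:1}. From $\bar\partial\pi\equiv0$ and $\pi(z)=z^n(1+\Ordo(|z|^{-1}))$ at infinity, Liouville's theorem forces $\pi$ to be a monic polynomial of degree exactly $n$. Now consider the vector field $F=\pi^{-1}\partial\PotV$, which by hypothesis is $C^2$ on all of $\C$; at infinity, $\partial\PotV(z)=\kappa^2 z^{-1}+\Ordo(|z|^{-2})$ from \eqref{eq:growth-Upi}, so $F(z)=\pi(z)^{-1}\partial\PotV(z)=\Ordo(|z|^{-n-1})$. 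The key computation is to integrate, for each $0\le j\le n-1$, the quantity $\bar\partial(z^jF(z))=z^j\bar\partial F(z)$ over a large disk and let the radius tend to infinity: by the divergence theorem the boundary term vanishes because $z^jF=\Ordo(|z|^{j-n-1})=\ordo(|z|^{-1})$, so $\int_{\C}z^j\bar\partial F\,\diffA=0$. But $\bar\partial F=\bar\partial(\pi^{-1}\partial\PotV)=\pi^{-1}\bar\partial\partial\PotV=\pi^{-1}\vDelta\PotV=\pi^{-1}|\pi|^2\e^{-2mQ}=\overline{\pi}\,\e^{-2mQ}$ (using $\bar\partial\pi^{-1}=0$ away from zeros of $\pi$, and checking that the resulting identity extends across the zeros since $\bar\partial F$ is continuous by the $C^2$ hypothesis). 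Hence $\int_{\C}z^j\overline{\pi(z)}\,\e^{-2mQ(z)}\diffA(z)=0$ for all $j<n$, i.e.\ $\pi$ is orthogonal to $\Pol_{n-1}$ in $L^2(\C,\e^{-2mQ}\diffA)$; combined with monicity of degree $n$ this pins down $\pi=\pi_{m,n}$ uniquely (this is the standard existence/uniqueness of orthogonal polynomials, which holds once $\n$ is large enough that the relevant Gram matrix is nondegenerate — this is where the hypothesis \say{$\n$ sufficiently large} is used). Finally, $\vDelta\PotV=|\pi_{m,n}|^2\e^{-2mQ}$ together with the prescribed logarithmic growth determines $\PotV$ up to a harmonic function that is $\Ordo(\log|z|)$ hence constant, and the constant is fixed by the $\Ordo(|z|^{-1})$ error term, giving $\PotV=\int g(\cdot,w)\diff\mu(w)$.

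The main obstacle is the careful handling of the third condition $\pi^{-1}\partial\PotV\in C^2(\C)$ on both sides of the argument: on the existence side one must check that the Cauchy transform genuinely produces a $C^2$ (indeed smoother) function after division by $\pi_{m,n}$, which rests on the orthogonality of $\pi_{m,n}$ to lower-degree polynomials and on regularity of the Cauchy transform of a compactly-controlled smooth density; on the uniqueness side one must justify that $\bar\partial(\pi^{-1}\partial\PotV)=\overline{\pi}\,\e^{-2mQ}$ as a genuine (continuous) identity across the zero set of $\pi$, rather than merely on the complement — the $C^2$ assumption is precisely what licenses this, since it guarantees $\bar\partial F$ extends continuously and the distributional computation agrees with the pointwise one. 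Everything else — the Liouville argument, the contour-integral extraction of orthogonality, and the identification of $\kappa_{m,n}^2$ — is routine once these regularity points are settled.
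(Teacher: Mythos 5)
Your proposal is correct and follows essentially the same route as the paper's: Liouville to force $\pi$ to be a monic polynomial of degree $n$, the logarithmic-potential representation of $\PotV$ and its large-$|z|$ expansion giving $\kappa^2=\|\pi\|_{2mQ}^2$, the $C^2$ function $\Phi=\pi^{-1}\partial\PotV$ satisfying $\bar\partial\Phi=\overline{\pi}\,\e^{-2mQ}$ whose decay $\Ordo(|z|^{-n-1})$ forces orthogonality to lower-degree polynomials via the contour/divergence argument, and the same splitting $|\pi(w)|^2/(z-w)=\overline{\pi(w)}\big(\pi(w)-\pi(z)\big)/(z-w)+\pi(z)\overline{\pi(w)}/(z-w)$ for the converse. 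The one small inaccuracy is your attribution of the hypothesis ``$\n$ sufficiently large, $n\le\n$'' to nondegeneracy of the Gram matrix: that nondegeneracy is automatic once the measure $\e^{-2mQ}\diffA$ has infinite support, and the hypothesis is rather what guarantees, via \eqref{eq:growth-Q}, that the moments $\int|z|^{2j}\e^{-2mQ}\diffA$ for $j\le n$ are finite so that $\PotV$, the inner products, and $\kappa_{m,n}$ are well defined.
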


The proof of Proposition~\ref{lem:exact} is rather elementary, 
but relies on some computations
which we defer to \S\ref{s:exact}.
We try to obtain approximate solutions to Problem~\ref{problem:1}, and 
this allows us derive an asymptotic formula for the planar orthogonal polynomials 
(see Theorems~\ref{thm:main-prel} and \ref{thm:main} below). 
As for the approximate solution,
we will replace the delicate divisibility criterion $\pi^{-1}\partial\,\PotV\in C^2$,
which implies that the gradient of $\PotV$ vanishes on the zero set of $\pi$,
by the requirement that the gradient of $\PotV$ vanishes asymptotically
interiorly of a curve which is characteristic for the problem 
(the boundary of the associated droplet).

Introducing the pair $(\Pot, P)=(\kappa^{-2}\PotV,\kappa^{-1}\pi)$,
we obtain a solution to the $L^2$-normalized problem
\begin{equation}
\label{eq:pot-exact-2}
\begin{cases}
\vDelta\, \Pot(z)=|P(z)|^2\e^{-2m Q(z)},\\
\bar\partial P\equiv 0,\\
P^{-1}\partial\, \Pot\in C^2(\C),
\end{cases}
\end{equation}
with asymptotic behavior
\begin{equation}
\big(\Pot(z),P(z)\big)=
\big(\log|z|^2+\Ordo(|z|^{-1}),\kappa^{-1}z^n(1+\Ordo(|z|^{-1})\big)
\label{eq:prob2b}
\end{equation}
as $|z|\to+\infty$, where $\kappa=\kappa_{m,n}$ is as before. 
It then follows
from Proposition~\ref{lem:exact} that $P=\pi/\lVert \pi\rVert_{L^2(\C,\e^{-2\n Q}\diffA)}$ 
is the  normalized
orthogonal polynomial.

\subsection{Motivation}
\label{s:motiv}
The study of the asymptotic expansions of planar orthogonal
polynomials for exponentially varying weights
is a rather recent development. 
In \cite{HW-ONP}, the asymptotic
expansion of the normalized orthogonal polynomial 
$P_{m,n}$ (and hence $\pi_{m,n}$) was found, and this result was used
to establish boundary universality in the random normal matrix model.
Related results on orthogonal polynomials and Bergman kernels were obtained
in \cite{HW-OFF, HW-ONP2}.
In the work \cite{HedenmalmPotential}, the study of planar orthogonal
polynomials was pursued using a 
\say{soft Riemann-Hilbert}-approach, which involves the equation
\begin{equation}
\label{eq:d-bar}
\bar\partial{\Psi}=\overline{\pi}\,\e^{-2mQ},
\end{equation}
where we recall that $\pi=\pi_{m,n}$ is the $n$-th
monic orthogonal polynomial in the space $L^2(\C,\e^{-2mQ}\diffA)$.

A strong motivation for studying planar orthogonal polynomials
comes from the connection with 
random normal matrix theory, see e.g. \cite{HM}, \cite{WZ}.
This connection comes from the fact that the \emph{polynomial Bergman kernel}
\begin{equation}
\label{eq:kernel-def}
{\rm k}_{m,n}(z,w)=\sum_{k=0}^{n-1}P_{m,k}(z)\overline{P_{m,k}(w)}
\end{equation}
is, up to a weight factor, the correlation kernel for the determinantal
random normal matrix eigenvalue ensemble.
It is natural to analyze the \emph{diagonal restriction}
\[
{\rm k}_{m,n}(z,z)=\sum_{k=0}^{n-1}|P_{m,k}(z)|^2,
\]
in view of the connection with the one-point function 
(or density of states) in the random normal matrix model,
see, e.g.,\ the introduction of \cite{HW-ONP} as well as
\cite{ahm2, ahm3}. Studying the diagonal restriction is also motivated by the fact that
it uniquely determines the full kernel
${\rm k}_{\n,n}(z,w)$ by polarization.
We would like to find a suitable asymptotic expansion for ${\rm k}_{\n,n}(z,z)$
valid throughout the complex plane. This is well understood in the instance when there is
no polynomial truncation, see, e.g.,\ the works \cite{Tian, Catlin, Zelditch, BBS, DHS}
of Tian, Catlin, Zelditch, Berman-Berndtsson-Sj{\"o}strand, and Deleporte-Hitrik-Sj{\"o}strand. 
In addition, the recent progress in understanding each individual 
orthogonal polynomial suggests that such asymptotics may be available also in the truncated case.
To make substantial progress on this issue, we would need to be able to capture each
contribution $|P_{\n,k}|^2$ as well as the effect of summation over the degrees.
This splits the difficulty in two parts: {\rm (a)} to derive asymptotic expansions of each 
contribution $|P_{\n,k}|^2$, and
{\rm (b)} to effectively sum these asymptotic expansions
over $k=0,\ldots, n-1$.

Here, we analyze aspect {\rm (a)} of this program, with an aim to 
use this to make progress on {\rm (b)} later on. In principle, we could just
implement the results from \cite{HW-ONP} and \cite{HedenmalmPotential} which give
the asymptotics for $P_{\n,k}$, take the squared modulus of it for part {\rm (a)} 
and sum the resulting expressions over $k$ for {\rm (b)}. This was carried out
in \cite{HW-ONP} resulting in near-diagonal asymptotics 
which gave rise to a boundary universality result. 
A similar approach was recently carried by Ameur-Cronvall \cite{AmeurCronvall}
in some other regimes, further away from the diagonal. However, given that we need
to sum the resulting expression $|P_{\n,k}|^2$ over $k$, it is much more
convenient to have a calculus which gives the squared moduli $|P_{\n,k}|^2$ directly.
This is the purpose of the present work. 
As for the summation in part {\rm (b)}, given that in many situations integration
is actually easier than summation, one should attempt to find a primitive
with respect to the degree parameter $k$ of $|P_{\n,k}|^2\e^{-2\n Q}$. We believe that this is within reach, 
and that this primitive function combined with the Euler-MacLaurin summation formula 
can be used to approximate the true one-point function.

To characterize the squared modulus $|P_{\n,n}|^2$, 
we appeal to Proposition~\ref{lem:exact} above,
which says that we should study the renormalized version of 
Problem~\ref{problem:1} expressed in \eqref{eq:pot-exact-2} and \eqref{eq:prob2b}. 
This may be viewed as an extension of the $\bar\partial$-approach
to planar orthogonal polynomials, as formulated by 
Its and Takhtajan in \cite{ItsTakhtajan}
and developed in \cite{HedenmalmPotential}. 
If $\mathscr{V}_0 =\partial \mathscr{U}_0$, 
then $\mathscr{V}_0$ solves 
\[
\bar\partial \mathscr{V}_0=|\pi|^2\e^{-2\n Q},
\] 
and, given that $\pi^{-1}\mathscr{V}_0\in C^2$ by assumption, 
we must have $\mathscr{V}_0=\pi\Psi$, 
where $\Psi$ solves the equation \eqref{eq:d-bar}.
Our approach to the study of the 
functions $|P_{\n,n}|^2$ is based on
finding approximate solutions of the renormalized version of 
Problem~\ref{problem:1}, see Theorem~\ref{thm:main}
and Corollary~\ref{cor:main} below 
(cf.\ Proposition~\ref{prop:approx-Pot-eq}).
In particular, lifting to the Laplacian allows 
us to keep the approach real-valued.

\subsection{Related work}
The asymptotic properties of orthogonal polynomials 
on the real line have been successfully analyzed 
using Riemann-Hilbert methods, see e.g.\ the works 
\cite{Its2, Deift-varying, Deift-PNAS} and \cite{DeiftZhou}
by Fokas, Its, Kitaev, Deift, Kriecherbauer, 
McLaughlin, Venakides, and Zhou, in various constellations. 
The $\bar\partial$-approach to planar orthogonal polynomials appeared first by
Its-Takhtajan in \cite{ItsTakhtajan},
and was studied also by Klein--McLaughlin in \cite{KleinMcLaughlin}.

In addition to the above-mentioned recent works \cite{HW-ONP, HW-ONP2, HW-OFF, HedenmalmPotential}
on the asymptotics of planar orthogonal polynomials for rather general potentials,
we mention a related body of work concerning special classes of potentials
which can be studied using classical Riemann-Hilbert methods. 
This includes work by Balogh-Bertola-Lee-McLaughlin \cite{bblm}, 
Bleher-Kuijlaars \cite{BleherKuijlaars}, Grava-Balogh-Merzi \cite{BGM}, 
Bertola-Elias Rebelo-Grava \cite{BEG}, Lee-Yang \cite{LeeYang1, LeeYang2} 
and by Byun-Yang \cite{ByunYang}.

\subsection{Notation}
\label{s:assumptions}
We denote by $\diffA$ and $\diffs$ the usual area and length elements
in the complex plane $\C$, normalized such that
the unit disk $\D$ and the unit circle $\T$ have unit area and length, respectively.
We recall that we use the \say{quarter Laplacian} 
\[
\vDelta\eqd \partial\bar\partial=\tfrac14\big(\partial_x^2+\partial_y^2),
\]
where $\partial=\partial_z$ and 
$\bar\partial=\bar\partial_z$ are the standard Wirtinger
differential operators
\begin{equation}\label{eq:Wirtinger}
\partial=\tfrac12\big(\partial_x -\imag \partial_y\big),
\qquad
\bar\partial=\tfrac12\big(\partial_x +\imag \partial_y\big),
\end{equation}
where $z=x+\imag y$.

Denote by $\mathrm{erf}$ the standard error function, given by
\[
\mathrm{erf}(x)=\frac{1}{\sqrt{2\pi}}
\int_{-\infty}^x \e^{-t^2/2}\diff t.
\]
With this normalization, we have the identity $\mathrm{erf}(x)+\mathrm{erf}(-x)=1$, 
the derivative is given by $\mathrm{erf}'(x)=(2\pi)^{-\frac12}\exp(-t^2/2)$
and we have $\lim_{x\to+\infty}\mathrm{erf}(x)=1$.
We note the bound
\begin{equation}
\label{eq:erf-est1}
0<\mathrm{erf}(x)< (2\pi)^{-\frac12}|x|^{-1}\e^{-x^2/2},
\end{equation}
valid for $x<0$, while for $x>0$ we have
\begin{equation}
\label{eq:erf-est2}
1-(2\pi)^{-\frac12}x^{-1}\e^{-x^2/2}<\mathrm{erf}(x)<1.
\end{equation}

We will denote by $Q$ an external \say{potential}, by which we mean
a $C^2$-smooth function $Q:\C\to \R$, subject to the growth
condition \eqref{eq:growth-Q}.

We denote by $\lVert \cdot\rVert_{2\n Q}$ and 
$\langle \cdot\,,\cdot\rangle_{2\n Q}$ the usual norm and sesquilinear
inner product
in the space $L^2(\C,\e^{-2\n Q}\diffA)$, respectively.

\subsection{Potential theoretic background}
\label{s:pot-theory}
For a parameter $\tau\in (0,1]$, we define the envelope function
\begin{equation}\label{eq:check-Q}
\check{Q}_\tau(z)\eqd\sup\big\{u(z):u\in \mathrm{SH}_\tau(\C),\, 
u(z)\le Q(z)\text{ for all }z\in\C\big\},
\end{equation}
where $\mathrm{SH}_\tau(\C)$ denotes the class of subharmonic
functions which grow at most as $\tau\log|z| + \Ordo(1)$ at infinity.
The function $\check{Q}_\tau$ is $C^{1,1}$-smooth and 
subharmonic; see e.g.\ \cite{HM}.

We let $\calS_\tau$ be the \emph{coincidence set} 
\[
\calS_\tau\eqd\big\{z\in\C:\,\check{Q}_\tau(z)=Q(z)\big\}.
\]
By a Perron-type argument, 
one sees that the function $\check{Q}_\tau$
is harmonic outside $\calS_\tau$.
Since $\tau\le 1$, the set $\calS_\tau$ is compact;
see \cite{SaffTotik}. 
We will moreover assume that $\calS_\tau$ is a connected set, and that
that the outer boundary, i.e.\ the boundary $\Gamma_\tau$ 
of the unbounded component $\Omega_\tau$ of $\C\setminus\calS_\tau$, 
is a real-analytically smooth curve.
In addition, we assume that $Q$ is strictly subharmonic 
(i.e.\ that $\vDelta Q>0$ holds) and real-analytic
in a neighborhood of $\Gamma_\tau$, which in particular implies
that other components of the complement
$\partial\calS_\tau\setminus\Gamma_\tau$ are located at a positive
distance from the curve $\Gamma_\tau$.
We denote by $\phi_\tau$ the standard Riemann mapping of $\Omega_\tau$
onto the exterior disk $\D_\e\eqd\{z\in\C:|z|>1\}$, normalized 
so that $\phi_\tau(\infty)=\infty$ and $\phi_\tau'(\infty)>0$.

The function $\check{Q}_\tau$ is harmonic on the complement 
$\C\setminus\calS_\tau$, so in particular on the unbounded
component $\Omega_\tau$.
Under the present assumptions,
the restriction $\check{Q}_\tau\vert_{\Omega_\tau}$
extends to a harmonic function $\breve{Q}_\tau$ on a larger domain
$\Omega_{\tau,\epsilon}=\{z\in\C: {\rm dist}_{\C}(z,\Omega_\tau)\le \epsilon\}$
for some small positive $\epsilon$. 
Due to well-known properties of the obstacle problem, the function
$Q-\breve{Q}_\tau$ is non-negative on a small neighborhood of $\Omega_\tau$,
which we can take to equal $\Omega_{\tau,\epsilon}$ without loss of generality.
In fact, we have that
\[
(Q-\breve{Q}_\tau)(z)\asymp {\rm dist}_\C(z,\Gamma_\tau)^2
\]
holds in a neighborhood of $\Gamma_\tau$. 
There exists a uniquely determined
$C^2$-smooth square root $V_{\tau}$ of $(Q-\breve{Q}_\tau)(z)$
which is positive outside $\Gamma_\tau$ and negative inside.
The square root is automatically real-analytically smooth 
with non-vanishing gradient in a small neighborhood of $\Gamma_\tau$,
and $C^2$--smooth and strictly positive on $\Omega_\tau$.

\section{The Berezin density and its potential}
\begin{figure}[t!]
\centering
\includegraphics[width=\linewidth]{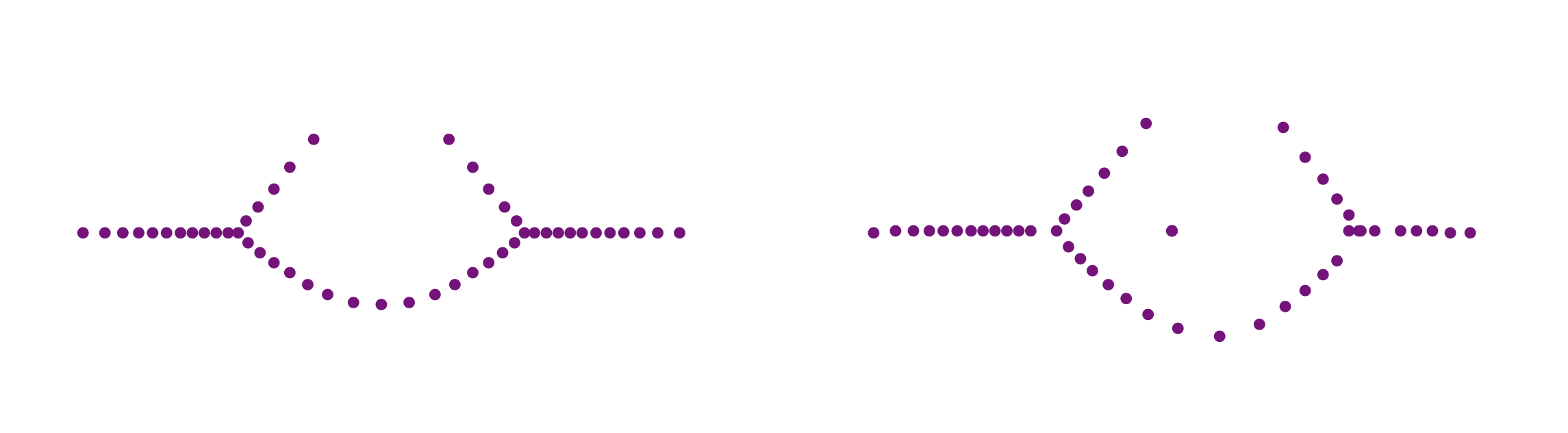}
\caption{Zeros of ${\rm k}_{\n,n}(z,w)$ for fixed $z$, 
for an elliptic potential $Q(z)=\frac12|z|^2+t\Re (z^2)$ with $t=0.6$ and
$(\n,n)=(150,22)$. The point $z$ equals {\rm (a)} $.027\imag$ 
and {\rm (b)} $0.36+0.15\imag$, respectively.}
\label{fig:Berezin-zeros}
\end{figure}

\label{s:Berezin}
\subsection{Polynomial Bergman kernels and their zero sets}
 The polynomial Bergman kernel
$\mathrm{k}(z,w)={\rm k}_{\n,n}(z,w)$ given by \eqref{eq:kernel-def}
is a polynomial of degree $\le n-1$ in $z$ 
(and in $\bar w$ as well, given the symmetry $\mathrm{k}(z,w)=
\overline{\mathrm{k}(w,z)}$). 
As such, the zero locus
\[
\mathfrak{Z}=\mathfrak{Z}_{\n,n}:=\{(z,w)\in\C^2:\,\mathrm{k}(z,w)=0\}
\]
is an algebraic variety in the variables $(z,\bar w)$ 
(and, alternatively, in $(\bar z,w)$). The slices 
\[
\mathfrak{Z}(z)=\mathfrak{Z}_{\n,n}(z):=\{w\in\C:\,\mathrm{k}(z,w)=0\}
\]
generically consist of $n-1$ points if we count multiplicities, and they move about 
holomorphically in $\bar z$. When $z$ approaches infinity, the zero set
$\mathfrak{Z}(z)$ converges to
\[
\mathfrak{Z}(\infty)=\mathfrak{Z}_{\n,n}(\infty):=\{w\in\C:\,P_{n-1}(w)=0\},
\]
where we write $P_{n-1}=P_{m,n-1}$ to simplify the notation. The reason for
this is that we have the convergence
\[
\lim_{|z|\to+\infty}\frac{\mathrm{k}(z,w)}{P_{n-1}(z)}=\overline{P_{n-1}(w)}.
\]
It is an interesting task to study the motion of $\mathfrak{Z}(z)$ as the point
$z\in\C$ varies.  The work here as well as in the work \cite{HW-OFF} suggests 
that for off-spectral points $z\in\C\setminus\calS$ where $\calS=\calS_\tau$ with 
$\tau=n/m$, the zero set
$\mathfrak{Z}(z)$ will typically be confined to the droplet, i.e., we should have
$\mathfrak{Z}(z)\subset\calS$, and then $\mathfrak{Z}(z)$ consists of precisely 
$n-1$ points. The only way to have fewer points than $n-1$ is to have at least
one point escape to infinity. We could call such points $z$ where $\mathfrak{Z}(z)$
has fewer than $n-1$ points (counting multiplicities) \emph{singular}. 
It is easy to see that $z$ is singular if and only if the degree of 
$\mathrm{k}(\cdot,z)$ is $<n-1$, and that this is so precisely when
$P_{n-1}(z)=0$. So \emph{the singular points form the zero locus of} $P_{n-1}$. 
Moreover, we know from \cite{HW-ONP} and \cite{HedenmalmPotential} that under
suitable additional topological and regularity conditions, the zero locus is contained 
in the interior of $\calS$ with some control on the distance to the boundary.
A numerical study of the zero set in the case of the kernel for the elliptic Ginibre ensemble
is illustrated in Figure~\ref{fig:Berezin-zeros}.

\subsection{The Berezin density at infinity}

The \emph{orthogonal polynomial wave function}, given by 
$|P_{\n,n}|^2\e^{-2\n Q}$, may be interpreted as an instance of the 
Berezin density for a polynomial ensemble. To see this, we introduce the 
notation $\mathrm{K}(z,w)={\rm K}_{\n,n}(z,w)$ for the weighted kernel
\[
{\rm K}(z,w)={\rm k}(z,w)\,\e^{-\n(Q(z)+Q(w))},
\]
where $\mathrm{k}(z,w)={\rm k}_{\n,n}(z,w)$ is the polynomial 
Bergman kernel \eqref{eq:kernel-def},
and define the Berezin (probability) density $B(z,w)=B_{\n,n}(z,w)$ by
the formula
\[
B(z,w)
=\frac{|{\rm K}_{\n,n}(z,w)|^2}{{\rm K}_{\n,n}(z,z)}=
\frac{|{\rm k}_{\n,n}(z,w)|^2}{{\rm k}_{\n,n}(z,z)}\e^{-2\n Q(w)}.
\]
A limit case of the Berezin density is
\[
|P_{n-1}(w)|^2\e^{-2\n Q(w)}=B(\infty,w)\coloneqq
\lim_{|z|\to+\infty}B(z,w),
\]
where the expression on the left-hand side is the density which
occurs in our potential problem \eqref{eq:pot-exact-2} with $n$ replaced by $n-1$. 
The algorithm presented in this work supplies an asymptotic expansion
for  the density $B_{\n,n}(\infty,\cdot)=|P_{\n,n}|^2\e^{-2\n Q}$ 
and the associated potential $\Pot$, where the two are
connected via \eqref{eq:pot-exact-2} and \eqref{eq:prob2b}.
It is a natural question to ask whether this asymptotic analysis extends beyond
the instance $z=\infty$. We reflect on this in the subsection below.

\subsection{The Berezin density at a general point}
We fix a general point $z\in\C$, and introduce the \emph{Berezin potential} 
$\PotB(z,\cdot)=\PotB_{\n,n}(z,\cdot)$ given by 
\[
\PotB_{\n,n}(z,w)=\int_\C B_{\n,n}(z,\xi)\log|w-\xi|^2\diffA(\xi).
\]
Then $\Pot=\PotB_{\n,n}(\infty,\cdot)$, and the $\partial_w$-derivative of the
Berezin potential equals
\begin{equation}
\label{eq:PotB21}
\partial_w\PotB(z,w)=\int_\C \frac{B_{\n,n}(z,\xi)}{w-\xi}\diffA(\xi).
\end{equation}
By the reproducing
property of kernels, we find that
\begin{multline}
\label{eq:prop101}
\partial_w\PotB(z,w)=\int_\C \frac{B_{\n,n}(z,\xi)}{w-\xi}\diffA(\xi)=
\frac{1}{\mathrm{k}(z,z)}
\int_\C\mathrm{k}(z,\xi)\,\frac{\mathrm{k}(\xi,z)}{w-\xi}\,\e^{-2mQ(\xi)}\diffA(\xi)
\\
=\frac{1}{\mathrm{k}(z,z)}
\int_\C\mathrm{k}(z,\xi)\,\frac{\mathrm{k}(\xi,z)-\mathrm{k}(w,z)}{w-\xi}\,
\e^{-2mQ(\xi)}\diffA(\xi)
+\frac{\mathrm{k}(w,z)}{\mathrm{k}(z,z)}
\int_\C\frac{\mathrm{k}(z,\xi)}{w-\xi}\,
\e^{-2mQ(\xi)}\diffA(\xi)
\\
=\frac{1}{\mathrm{k}(z,z)}
\,\frac{\mathrm{k}(z,z)-\mathrm{k}(w,z)}{w-z}\,
+\frac{\mathrm{k}(w,z)}{\mathrm{k}(z,z)}
\int_\C\frac{\mathrm{k}(z,\xi)}{w-\xi}\,
\e^{-2mQ(\xi)}\diffA(\xi).
\end{multline}
It is our task here to formulate an analogue to Problem 1 which characterizes the Berezin
potential. As a first step, we introduce appropriate notation, and write 
$\mathfrak{c}(z,w):=(w-z)^{-1}$ for the \emph{Cauchy kernel}. 
It follows from \eqref{eq:prop101} that
\begin{equation}
\label{eq:prop102}
\frac{\mathrm{k}(z,z)}{\mathrm{k}(w,z)}\big(\partial_w\PotB(z,w)-\mathfrak{c}(z,w)\big)
+\mathfrak{c}(z,w)=
\int_\C\frac{\mathrm{k}(z,\xi)}{w-\xi}\,\e^{-2mQ(\xi)}\diffA(\xi)
\end{equation}
which expression defines a smooth function of $w\in\C$. This suggests the following 
formulation of the potential problem for the Berezin density: 
\begin{equation}
\label{eq:pot-exact-2B}
\begin{cases}
\vDelta\PotA(z,\cdot)=|q|^2\e^{-2m Q}-\delta_z,\\
\bar\partial q\equiv 0\,\,\,\text{and}\,\,\,q(z)>0,\\
q^{-1}\partial\, \PotA(z,\cdot)\in C^2(\C\setminus\{z\}),
\end{cases}
\end{equation}
coupled with the asymptotic behavior
\begin{equation}
\big(\PotA(z,w),q(w)\big)=
\big(\Ordo(|w|^{-1}),\eta_0
w^{n-1}(1+\Ordo(|w|^{-1})\big)\quad\text{as}\,\,\,|w|\to+\infty,
\label{eq:prob2B}
\end{equation}
for some complex parameter $\eta_0\ne0$.

\begin{thm}
\label{thm:Berezin}
The solution to the system \eqref{eq:pot-exact-2B}--\eqref{eq:prob2B} exists 
uniquely provided that $z$ is nonsingular, i.e, $P_{n-1}(z)\ne0$, and is given by 
\[
(\PotA(z,w),q(w))=\big(\PotB(z,w)-\log|z-w|^2,k(z,z)^{-\frac12}k(w,z)\big).
\]
\end{thm}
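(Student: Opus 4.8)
The plan is to mirror the structure of Proposition~\ref{lem:exact} as closely as possible, exploiting the fact that the Berezin density $B_{\n,n}(z,\cdot)$ already has a kernel-theoretic description. First I would verify that the proposed pair indeed solves the system. Setting $q(w)=k(z,z)^{-1/2}k(w,z)$, holomorphy of $k(w,z)$ in $w$ (the kernel $\mathrm{k}$ is a polynomial in the first slot) gives $\bar\partial q\equiv 0$, while $q(z)=k(z,z)^{1/2}>0$ and nonsingularity $P_{n-1}(z)\ne0$ ensures $q$ has exact degree $n-1$, pinning down the constant $\eta_0$ and the decay in \eqref{eq:prob2B}. For the Laplacian equation, I would compute $\vDelta_w\big(\PotB(z,w)-\log|z-w|^2\big)$: by definition of $\PotB$ as a logarithmic potential of the probability density $B_{\n,n}(z,\cdot)$ we get $\vDelta_w\PotB(z,w)=B_{\n,n}(z,w)=|q(w)|^2\e^{-2\n Q(w)}$, and $\vDelta_w\log|z-w|^2=\delta_z$, so the difference satisfies $\vDelta\PotA(z,\cdot)=|q|^2\e^{-2\n Q}-\delta_z$ as required. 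The asymptotics $\PotA(z,w)=\Ordo(|w|^{-1})$ follow because $B_{\n,n}(z,\cdot)$ integrates to one, so the leading $\log|w|^2$ term of $\PotB$ cancels against $\log|z-w|^2$.

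The divisibility condition $q^{-1}\partial_w\PotA(z,\cdot)\in C^2(\C\setminus\{z\})$ is the step requiring genuine work, and is where I expect the main obstacle. Here I would use the identity \eqref{eq:prop102}: rearranging, $\partial_w\PotA(z,w)=\partial_w\PotB(z,w)-\mathfrak{c}(z,w)$, and \eqref{eq:prop102} expresses exactly the combination $\frac{\mathrm{k}(z,z)}{\mathrm{k}(w,z)}\partial_w\PotA(z,w)+\mathfrak{c}(z,w)$ as the Cauchy-type integral $\int_\C \frac{\mathrm{k}(z,\xi)}{w-\xi}\e^{-2\n Q(\xi)}\diffA(\xi)$, which is smooth in $w$ across $\C$, including near $w=z$ where the two singular contributions conspire to cancel. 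Dividing by $k(z,z)^{1/2}$ and observing $q(w)=k(z,z)^{-1/2}k(w,z)$, this says precisely $q^{-1}\partial_w\PotA(z,w)=k(z,z)^{-1/2}\big(\text{smooth function}\big)-k(z,z)^{1/2}\mathfrak{c}(z,w)/k(w,z)$; the apparent pole of $\mathfrak{c}(z,w)/k(w,z)$ at $w=z$ is cancelled since $k(z,z)\ne0$ and the smooth-function term absorbs it — more precisely, one reads off from \eqref{eq:prop102} directly that $q^{-1}(\partial_w\PotA+\mathfrak{c})$ is smooth on all of $\C$, hence $q^{-1}\partial_w\PotA$ is smooth away from $\{z\}$, with the only singularity the harmless simple pole coming from $\mathfrak{c}$. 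Care is needed that ``smooth'' here means $C^\infty$ in $w$; this follows since $Q\in C^2$ makes the integrand and its derivatives up to the relevant order locally integrable, so differentiation under the integral sign is justified and the integral defines at least a $C^2$ function.

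For uniqueness, I would argue as in the proof of Proposition~\ref{lem:exact}: suppose $(\PotA,q)$ is any solution. From $\bar\partial q\equiv 0$ and the growth $q(w)=\eta_0 w^{n-1}(1+\Ordo(|w|^{-1}))$, $q$ is a polynomial of degree exactly $n-1$. The function $\mathscr{V}:=\partial_w\PotA(z,\cdot)+\mathfrak{c}(z,\cdot)$ then satisfies $\bar\partial_w\mathscr{V}=\vDelta_w\PotA(z,\cdot)+\bar\partial_w\mathfrak{c}(z,\cdot)=|q|^2\e^{-2\n Q}-\delta_z+\delta_z=|q|^2\e^{-2\n Q}$, and by the divisibility hypothesis $\mathscr{V}/q\in C^2(\C\setminus\{z\})$; checking the behavior at $w=z$ (the simple pole of $\mathfrak{c}$ divided by $q(z)\ne0$) and at infinity, $\mathscr{V}/q$ extends to a function whose $\bar\partial$ reproduces $\bar q\,\e^{-2\n Q}$ against test polynomials of degree $\le n-1$, forcing (by the orthogonality characterization used for \eqref{eq:d-bar}, applied with $q$ in place of $\pi$ and an $\eta_0$-rescaling) that $q$ is proportional to $k(\cdot,z)$. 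The normalization $q(z)>0$ then fixes $q=k(z,z)^{-1/2}k(\cdot,z)$, and $\PotA(z,\cdot)$ is recovered from $\vDelta\PotA=|q|^2\e^{-2\n Q}-\delta_z$ together with the decay at infinity by uniqueness of the logarithmic potential. The one subtlety to handle carefully is that the reproducing/orthogonality argument now lives in the $(n-1)$-dimensional polynomial space and must use that $k(\cdot,z)$ spans the one-dimensional complement appropriately — essentially the content of \eqref{eq:prop101} run in reverse — so I would phrase this as: $w\mapsto\mathscr{V}(w)/q(w)$, being entire away from the pole and of the right growth, is the Cauchy transform $\int_\C \frac{\overline{k(\xi,z)}}{\cdot-\xi}\e^{-2\n Q}\diffA$ up to constant, which identifies $q$ with $k(\cdot,z)$ up to a unimodular-times-positive scalar, and $q(z)>0$ removes the remaining ambiguity.
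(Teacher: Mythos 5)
Your proposal mirrors the paper's proof closely in its overall structure: verify the explicit pair solves the system, then argue uniqueness by reducing to a $\bar\partial$-problem and the reproducing property. The verification direction is essentially the paper's argument. (Two small slips there: from \eqref{eq:prop102} the correct form is $q^{-1}\partial_w\PotA(z,\cdot)=k(z,z)^{-1/2}\big(\text{smooth}-\mathfrak{c}(z,\cdot)\big)$ rather than $k(z,z)^{-1/2}(\text{smooth})-k(z,z)^{1/2}\mathfrak{c}/k(\cdot,z)$, and the assertion that $q^{-1}(\partial_w\PotA+\mathfrak{c})$ is smooth on \emph{all} of $\C$ fails at the zeros of $q$ inside the droplet; neither slip affects the conclusion that $q^{-1}\partial_w\PotA\in C^2(\C\setminus\{z\})$.)

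The genuine gap is in the uniqueness direction. You claim the $\bar\partial$-argument identifies $q$ with $k(\cdot,z)$ ``up to a unimodular-times-positive scalar,'' and that ``$q(z)>0$ removes the remaining ambiguity.'' This is off by one degree of freedom: $q(z)>0$ only fixes the phase, not the positive scalar $\lambda$ in $q=\lambda\, k(\cdot,z)$. Pinning $\lambda$ down requires the normalization $\|q\|_{2mQ}=1$, which your sketch never establishes. The paper gets it from a Liouville-type comparison: set $\PotA^\circledast(z,w):=\int_\C|q(\xi)|^2\e^{-2mQ(\xi)}\log|w-\xi|^2\,\diffA(\xi)-\log|z-w|^2$, observe that $H:=\PotA(z,\cdot)-\PotA^\circledast(z,\cdot)$ is harmonic with growth $(1-\|q\|^2_{2mQ})\log|w|^2+\Ordo(|w|^{-1})$, and conclude $H\equiv0$ and $\|q\|_{2mQ}=1$. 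This step does double duty: it also supplies the explicit integral representation $\PotA=\PotA^\circledast$, which is what yields the decay $\partial_w\PotA(z,w)=\Ordo(|w|^{-2})$. Your phrase ``of the right growth'' for $\mathscr{V}/q$ quietly presupposes this, but the hypothesis $\PotA=\Ordo(|w|^{-1})$ does not by itself give the gradient decay at the rate needed for the finite-geometric-series expansion that produces the reproducing property. Once you insert this Liouville step, the reproducing property $\int_\C\xi^j q(z)\overline{q(\xi)}\,\e^{-2mQ(\xi)}\diffA(\xi)=z^j$ for $j=0,\ldots,n-1$ determines $q(z)\overline{q(\cdot)}=k(z,\cdot)$ outright (normalization included), $q(z)>0$ fixes the phase, and the rest of your sketch matches the paper.
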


The proof of this theorem is deferred to \S\ref{sec:Berezinapp}.
The theorem holds for any point $z\in\C$. However, for an off-spectral point 
$z\in\C\setminus\calS$, things are sufficiently similar to the case $z=\infty$ 
worked out here so that we may guess a shape for $(\PotA(z,\cdot),q)$  and
get the algorithm to supply asymptotic expansions.
We do not carry out any of the necessary details here.
We could mention that the analysis should share some features with 
the method introduced in \cite{HW-OFF}, 
where an asymptotic expansion formula for the kernel 
$\mathrm{k}(w,z)$ was found for a fixed off-spectral point $z$.

\subsection{The Berezin density at bulk points}
The asymptotic expansion formula for the polynomial Bergman kernel 
$\mathrm{k}(z,w)=\mathrm{k}_{\n,n}(z,w)$ for bulk points $z$ is more classical.
We recall that in the context, $z\in\C$ is a \emph{bulk point} if $z$ is in the interior 
of the droplet $\calS$, and in addition, $\varDelta Q(z)>0$ holds at the point.
The H\"ormander $\dbar$-theory allows for localization of the polynomial Bergman
kernel $\mathrm{k}(z,\cdot)$ at bulk points, which gives that the kernel is asymptotically 
the same as the Bergman kernel without polynomial growth restraint \cite{ahm1}.
This would not immediately supply the Berezin potential $\PotB(z,\cdot)$ for bulk points $z$,
but it is expected such a theory could be developed.
We hope to be able to pursue this matter elsewhere. 

\begin{rem}There are of course points $z$ which 
are neither off-spectral nor bulk. In principle there are two possible types of such points. 
We have the \emph{boundary points} $z\in\calS$ as well as the interior points $z$ of the 
droplet $\calS$ where $\vDelta Q(z)=0$. We might refer to the latter as 
\emph{weak} bulk points. Under some radiality assumptions, 
such weak points (or \say{bulk singularities}) were studied
by Ameur-Seo in \cite{AmeurSeo}, who found that Mittag-Leffler kernels
appear in the local scaling limit.
There is currently no method available to expand polynomial Bergman
kernels at general weak bulk points, but we expect that 
such a theory may be developed, at least in particular instances, depending on the type 
of zero of $\vDelta Q$. As for the expansion at regular boundary points, partial insight was
reached in \cite{HW-ONP}, but we could ask for asymptotic expansion there as well. 
\end{rem}

\subsection{ The Ward identity in terms of the Berezin potential}

We aim to characterize the information content of the Ward identities in terms 
of a pointwise identity involving the Berezin potential $\PotB(z,\cdot)$.
To this end, we let $\circledQ=\circledQ_{\n,n}$
denote the function
\[
\circledQ(z)=\tau\int_\C u_n(w)\log|z-w|^2\,\diffA(w)+c,
\] 
where as usual $\tau=n/m$. Here, $u_n=u_{\n,n}$ is the so-called one-point function
 \begin{equation}
\label{eq:onepoint}
u_n(z)=\frac{1}{n}\,\mathrm{K}(z,z)=\frac{1}{n}\,\mathrm{k}(z,z)\,\e^{-2mQ(z)},
\end{equation}
which is a probability density in $\C$. The constant $c=c_{Q,\tau}$ should
only depend on $Q$ and $\tau$, and it  
is chosen such that $\lim_{m\to+\infty}\circledQ=\check{Q}$ holds.
The Ward identity $\mathbb{E} W_{n}^+[v]=0$ of Proposition 3.1 in \cite{ahm3}
has an equivalent formulation in terms of the Berezin potential.

\begin{thm}
\label{thm:Ward}
The Ward identity is equivalent to the near-diagonal behavior
\[
\PotB(z,w)-\PotB(z,z) 
+ \Lambda(z)-\Lambda(w)=\Ordo(|z-w|^2),
\]
where $\PotB=\PotB_{\n,n}$ and we let $\Lambda=\Lambda_{\n,n}$ denote the function 
\[
\Lambda(z)=2\n ({\circledQ}(z) -Q(z))
- \log {\rm K}_{\n,n}(z,z).
\]
\end{thm}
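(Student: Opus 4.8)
The plan is to convert the asserted $\Ordo(|z-w|^2)$-bound into a single pointwise identity by Taylor's formula, and to recognize that identity as the Ward identity unpacked through the determinantal structure. For the reduction, fix $z$ (here $\mathrm{k}(z,z)>0$ always, since $P_{\n,0}$ is a nonzero constant): then $w\mapsto\PotB(z,w)$ is real-valued and of class $C^2$ near $w=z$, because $B(z,\cdot)$ is a bounded, super-polynomially decaying $C^2$ density with $B(z,z)=\mathrm{K}(z,z)<\infty$, so its logarithmic potential is $C^2$ across the diagonal; likewise $\Lambda$ is real-valued and $C^2$ near $z$, since $Q\in C^2$, $\mathrm{K}(z,z)>0$, and $\circledQ$ is a logarithmic potential of a $C^2$ density. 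Set $F(w):=\PotB(z,w)-\PotB(z,z)+\Lambda(z)-\Lambda(w)$, so $F\in C^2$ near $w=z$ and $F(z)=0$. Taylor's formula gives
\[
F(w)=2\,\Re\Big[\big(\partial_w\PotB(z,w)\big|_{w=z}-\partial_z\Lambda(z)\big)(w-z)\Big]+\Ordo(|w-z|^2),
\]
so, letting $w-z$ run over all directions, $F(w)=\Ordo(|w-z|^2)$ holds if and only if $\partial_w\PotB(z,w)\big|_{w=z}=\partial_z\Lambda(z)$. It therefore suffices to prove that this gradient identity is equivalent to $\mathbb{E}\,W_n^+[v]=0$.

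To unfold the Ward identity, take expectations in $\mathbb{E}\,W_n^+[v]=0$ and use that the ensemble is determinantal with correlation kernel $\mathrm{K}$ — so the one-point intensity is $\mathrm{K}(z,z)$ and the two-point intensity is $\mathrm{K}(z,z)\mathrm{K}(w,w)-|\mathrm{K}(z,w)|^2$ — together with an integration by parts in the term $\sum_j\partial v(z_j)$. This turns $\mathbb{E}\,W_n^+[v]=0$, valid for all admissible test fields $v$, into a pointwise identity in $z$ after stripping off the arbitrary factor $v(z)$ and dividing by $\mathrm{K}(z,z)>0$. In that identity, the $\partial v$-term contributes $\partial_z\log\mathrm{K}(z,z)$ and the external-field term a multiple of $\partial Q(z)$; the pair-interaction term, split through $\mathrm{K}(z,z)\mathrm{K}(w,w)-|\mathrm{K}(z,w)|^2$, produces the Cauchy transform $\int_\C\frac{\mathrm{K}(w,w)}{z-w}\,\diffA(w)$ of the one-point intensity — which by the definitions of $u_{\n,n}$ and $\circledQ$ equals a constant multiple of $\partial_z\circledQ(z)$ (the two sides have the same $\dbar_z$-derivative, a multiple of $\mathrm{K}(z,z)$, and both vanish at $\infty$, hence coincide) — and the term $\mathrm{K}(z,z)^{-1}\int_\C\frac{|\mathrm{K}(z,w)|^2}{z-w}\,\diffA(w)$, which by \eqref{eq:PotB21} taken at $w=z$ is exactly $\partial_w\PotB(z,w)\big|_{w=z}$. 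Collecting these, the pointwise Ward identity becomes a linear relation among $\partial_w\PotB(z,w)\big|_{w=z}$, $\partial_z\circledQ(z)$, $\partial Q(z)$ and $\partial_z\log\mathrm{K}(z,z)$, which on inserting $\Lambda=2\n(\circledQ-Q)-\log\mathrm{K}(\cdot,\cdot)$ reads precisely $\partial_w\PotB(z,w)\big|_{w=z}=\partial_z\Lambda(z)$. Every step is reversible — the extracted pointwise identity holds at every $z$, and re-multiplying by $v$ and integrating recovers $\mathbb{E}\,W_n^+[v]=0$ — so the gradient identity and the Ward identity are equivalent; with the Taylor reduction, this proves the theorem.

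The routine points are the convergence of the singular integrals, which is not an issue because $\mathrm{K}(z,z)\mathrm{K}(w,w)-|\mathrm{K}(z,w)|^2$ vanishes on the diagonal $w=z$, the Cauchy kernel $1/(z-w)$ is locally integrable in $\C$, and all intensities decay super-polynomially by \eqref{eq:growth-Q}; and the justification that $v$ may be removed, which uses density of the test class together with continuity in $z$ of the remaining kernel. The step that demands genuine care — and which I expect to be the main obstacle — is the normalization bookkeeping in the second paragraph: the factor $2\n$ in the weight $\e^{-2\n Q}$, the ratio $\tau=n/m$ built into $\circledQ$, the additive constant $c_{Q,\tau}$ (harmless, as only $\partial_z$-derivatives and differences enter), and the $\log|\cdot|^2$-versus-$\log|\cdot|$ and $\vDelta$-versus-$\Delta$ conventions all have to be tracked simultaneously; a slip would leave a spurious multiple of $\partial_z\circledQ$ or of $\partial Q$ and break the equivalence. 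Once those match up, the chain of equivalences closes.
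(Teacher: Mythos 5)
The paper does not contain a proof of Theorem~\ref{thm:Ward}; it explicitly defers the proof to the forthcoming work \cite{HW-Ward}, so there is nothing in-text to compare your argument against. I therefore assess your proposal on its own merits.

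Your overall strategy is the natural one and is sound in outline. The Taylor reduction in the first paragraph is correct and complete: $\mathrm{k}(z,z)>0$ everywhere, $B(z,\cdot)$ is a bounded, rapidly decaying, H\"older-continuous density, so $\PotB(z,\cdot)\in C^{2,\alpha}$ near the diagonal by elliptic regularity; similarly $\circledQ$, $Q$ and $\log\mathrm{K}(\cdot,\cdot)$ are $C^2$, so $F(w)=\PotB(z,w)-\PotB(z,z)+\Lambda(z)-\Lambda(w)$ is $C^2$ with $F(z)=0$, and the $\Ordo(|z-w|^2)$ bound is equivalent to the vanishing of the linear term, i.e.\ to $\partial_w\PotB(z,w)\big|_{w=z}=\partial\Lambda(z)$. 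Your identification of the Cauchy transform $\int_\C\frac{\mathrm{K}(w,w)}{z-w}\diffA(w)$ with $\n\,\partial\circledQ$ (same $\bar\partial$-derivative, both vanish at infinity) is also correct, as is the identification of $\mathrm{K}(z,z)^{-1}\int_\C\frac{|\mathrm{K}(z,w)|^2}{z-w}\diffA(w)$ with $\partial_w\PotB(z,w)\big|_{w=z}$ via \eqref{eq:PotB21}.

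The gap is the one you flag yourself: you assert, but do not verify, that the linear relation among $\partial_w\PotB(z,w)\big|_{w=z}$, $\partial\circledQ$, $\partial Q$ and $\partial\log\mathrm{K}(\cdot,\cdot)$ extracted from $\mathbb{E}W_n^+[v]=0$ is \emph{exactly} $\partial_w\PotB(z,w)\big|_{w=z}=\partial\Lambda(z)$ with the coefficients $2\n$, $2\n$, $1$ that appear in $\Lambda=2\n(\circledQ-Q)-\log\mathrm{K}(\cdot,\cdot)$. That verification \emph{is} the theorem: once the $\Ordo(|z-w|^2)$ statement has been reduced to a gradient identity and the Ward identity unfolded to another gradient identity, the content of the equivalence is precisely that the two gradient identities coincide, coefficient by coefficient. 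To close this, you need to write out the exact form of $W_n^+[v]$ from Proposition~3.1 of \cite{ahm3}, take expectations against the determinantal two-point intensity $\mathrm{K}(z,z)\mathrm{K}(w,w)-|\mathrm{K}(z,w)|^2$, integrate the $\sum_j\partial v(z_j)$-term by parts, divide by $\mathrm{K}(z,z)$, and then perform the conversion from the weight convention of \cite{ahm3} to the $\e^{-2\n Q}$ convention here using the paper's remark (replace $Q$ by $\tau^{-1}Q$), tracking how that replacement propagates through $\circledQ$, $\Lambda$ and the weight factor in $\mathrm{K}$. As written, the proposal reduces the theorem to a bookkeeping check and then leaves that check undone; until that is carried out, the proof is incomplete.
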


We defer the proof of this statement to later work \cite{HW-Ward}, 
where we intend to explore
higher order Ward identities in the planar context as well.

\begin{rem}
The statements taken from \cite{ahm3} were formulated for $\tau=1$, i.e., $m=n$.
To arrive at the more general statement we should replace $Q$ by $\tau^{-1}Q$
in the statements imported from \cite{ahm3}. 
\end{rem}

\begin{rem}
An interpretation of Theorem \ref{thm:Ward} is that the Ward identity gives local
perturbation information on the Berezin potential $\PotB(z,w)$. However, the theorem 
does not offer any way of calculating the Berezin potential, so that type of information 
should be considered as deeper lying than the Ward identity itself.
\end{rem}

\section{Proofs of Proposition~\ref{lem:exact} and Theorem~\ref{thm:Berezin}}

\label{s:pf-potential-characterization}
\subsection{The proof of Proposition~\ref{lem:exact}}
\label{s:exact}
We verify that if $(\PotV,\pi)$ is a solution
to Problem~\ref{problem:1}, then
$\pi$ is the $n$-th degree monic orthogonal polynomial.
That $\pi$ is a monic polynomial of degree $n$ follows by a Liouville theorem type
argument. We argue that $\PotV$ takes the form
\begin{equation}\label{eq:potV-struct}
\PotV(z)=\int_{\C}|\pi(w)|^2\e^{-2\n Q(w)}\log|z-w|^2\diffA(w),\qquad z\in\C.
\end{equation}
This follows from the fact that 
\begin{equation}
\label{eq:logexpansion}
\log|z-w|^2=\log|z|^2+\log\Big|1-\frac{w}{z}\Big|^2=
\log|z|^2-2\Re\frac{w}{z}+\Ordo\bigg(\frac{|w|^2}{|z|^2}\bigg),
\end{equation}
as $|z|\to \infty$ while $|w|=\ordo(|z|)$,
together with the rapid decay of $|\pi|^2\e^{-2\n Q}$ at infinity and a Liouville-type
argument. We observe that the representation \eqref{eq:potV-struct} implies the
asymptotics
\begin{equation}
\label{eq:appendixU0}
\PotV(z)=\lVert \pi\rVert_{2\n Q}^2\log|z|^2 +\Ordo(|z|^{-1})
\end{equation}
as $|z|\to+\infty$, which proves that $\kappa^2=\lVert \pi\rVert_{2\n Q}^2$.
We may differentiate the relation \eqref{eq:potV-struct} to obtain
\begin{equation}
\label{eq:PotV-diff}
\partial \,\PotV(z)=\int_{\C}\frac{1}{z-w}|\pi(w)|^2\e^{-2\n Q(w)}\diffA(w),\qquad z\in\C,
\end{equation}
which implies the asymptotic relation
\begin{equation}
\label{eq:diff-PotV-asymp}
\partial \,\PotV(z)=\frac{\kappa^2+\Ordo(|z|^{-1})}{z}\qquad\text{as} \quad|z|\to+\infty.
\end{equation}
We proceed to verify that the orthogonality relation
\begin{equation}
\label{eq:orth-pi}
\int_\C q(z)\overline{\pi(z)}\,\e^{-2\n Q(z)}\diffA(z)
=0
\end{equation}
holds for any polynomial $q$ of degree at most $n-1$. 
By assumption, $\partial\PotV$ has the form
\[
\partial\,\PotV=\pi\,\Phi,
\]
where $\Phi$ is $C^2$-smooth. It follows that 
\[
|\pi|^2\e^{-2\n Q}=\vDelta\,\PotV=\bar\partial \partial\,\PotV
=\bar\partial\big(\pi\Phi\big)=\pi\bar\partial\Phi,
\]
from which we conclude that
$\bar\partial\Phi=\overline{\pi}\e^{-2\n Q}$.
We may also conclude that 
\[
\Phi(z)=\frac{1}{\pi(z)}\partial\,\PotV(z)
=\frac{\kappa^2+\Ordo(|z|^{-1})}{z^{n+1}}\qquad\text{as}\quad|z|\to+\infty.
\]
By a standard argument (see e.g.\ \S1.3 in \cite{HedenmalmPotential}), this implies the 
desired orthogonality relation \eqref{eq:orth-pi}.

Conversely, we let $\pi$ be the monic orthogonal polynomial of degree $n$
in the space $L^2(\C,\e^{-2\n Q}\diffA)$, and define $\PotV$ according to \eqref{eq:potV-struct}.
By repeating the above argument, it follows that $\PotV$ has the claimed asymptotic behavior
at infinity, and it only remains to show that 
\[
\pi^{-1}\partial\,\PotV\in C^2(\C).
\]
To this end, we observe that the formula \eqref{eq:PotV-diff} for $\partial\,\PotV$
remains valid, and that
\[
\frac{|\pi(w)|^2}{z-w}=\overline{\pi(w)}\,\frac{\pi(w)-\pi(z)}{z-w} + \frac{\pi(z)\overline{\pi(w)}}{z-w}.
\]
As a consequence, we obtain
\begin{multline}
\partial\,\PotV(z)=\int_{\C}\frac{|\pi(w)|^2}{z-w}\e^{-2\n Q(w)}\diffA(w)
=\int_{\C}\overline{\pi(w)}\,\frac{\pi(w)-\pi(z)}{z-w}\e^{-2\n Q(w)}\diffA(w)
\\+\pi(z)\int_{\C}\frac{\overline{\pi(w)}}{z-w}\e^{-2\n Q(w)}\diffA(w)=
\pi(z)\int_{\C}\frac{\overline{\pi(w)}}{z-w}\e^{-2\n Q(w)}\diffA(w),
\end{multline}
where we use that $(\pi(z)-\pi(w))/(z-w)$ is a polynomial of 
degree at most $n-1$ in $w$.
It follows that
\[
\pi(z)^{-1}\partial\,\PotV(z)=\int_{\C}\frac{\overline{\pi(w)}}{z-w}\e^{-2\n Q(w)}\diffA(w),
\]
which expresses a $C^2$--smooth function by elliptic regularity.\qed

\subsection{The proof of Theorem~\ref{thm:Berezin}}
\label{sec:Berezinapp}
We first check that the proposed solution 
\[
(\PotA(z,\cdot),q)=\big(\PotB(z,\cdot)-\mathfrak{l}(z,\cdot),k(z,z)^{-\frac12}k(\cdot,z)\big)
\]
indeed solves the system \eqref{eq:pot-exact-2B}--\eqref{eq:prob2B}, where 
$\mathfrak{l}(z,w):=\log|z-w|^2$.
From the definition of the Berezin potential $\PotB(z,\cdot)$, it is clear that
\begin{multline}
\vDelta \PotA(z,\cdot)=\vDelta(\PotB(z,\cdot)-\mathfrak{l}(z,\cdot))
=\vDelta\PotB(z,\cdot)-\delta_z=B_{\n,n}(z,\cdot)-\delta_z
\\
=\frac{|\mathrm{k}(z,\cdot)|^2}{\mathrm{k}(z,z)}\,\e^{-2mQ}-\delta_z
=|q|^2\,\e^{-2mQ}-\delta_z,
\end{multline}
where in the last step, we used the proposed formula for $q$. This settles the first line of 
\eqref{eq:pot-exact-2B}. Next, since $z$ is nonsingular, i.e., $P_{n-1}(z)\ne0$,
the proposed formula for $q$ is a polynomial of 
degree precisely $n-1$ with point value $q(z)=k(z,z)^{\frac12}>0$, which settles the second line of 
\eqref{eq:pot-exact-2B} as well as the asymptotic growth of $q$ in \eqref{eq:prob2B}. 
By the argument which leads up to \eqref{eq:appendixU0}, we obtain that
\begin{multline}
\PotB(z,w)= \int_\C\frac{|\mathrm{k}(z,\xi)|^2}{\mathrm{k}(z,z)}\,\e^{-2mQ(\xi)}\log|w-\xi|^2\diffA(\xi)
\\
=\frac{\|\mathrm{k}(z,\cdot)\|^2_{2mQ}}{k(z,z)}\log|w|^2+\Ordo(|w|^{-1})
=\log|w|^2+\Ordo(|w|^{-1})
\end{multline}
as $|w|\to+\infty$. Next, since $\log|w|^2-\log|z-w|^2=\Ordo(|w|^{-1})$, we obtain the required
asymptotic decay of $\PotA(z,\cdot)$ at infinity. 
Finally, in view of the identity \eqref{eq:prop102}, the third line of \eqref{eq:pot-exact-2B} 
follows as well.

It remains to check that the solution to the system \eqref{eq:pot-exact-2B}--\eqref{eq:prob2B} 
is unique. We observe that according to the second line of \eqref{eq:pot-exact-2B}, 
$q$ is entire, and according to the growth bound in \eqref{eq:prob2B}, $q$ must be a 
polynomial of degree equal to $n-1$, as the leading coefficient equals $\eta_0\ne0$. Next, we put
\begin{equation}
\label{eq:circledA}
\PotA^\circledast(z,w):=\int_\C|q(\xi)|^2\e^{-2mQ(\xi)}\log|w-\xi|^2\diffA(\xi)-\log|z-w|^2,
\end{equation}
and, by the argument which gives \eqref{eq:appendixU0}, we find that
\[
\PotA^\circledast(z,w):=\|q\|^2_{2mQ}\log|w|^2-\log|z-w|^2
=(\|q\|^2_{2mQ}-1)\log|w|^2+\Ordo(|w|^{-1}),
\]
as $|w|\to+\infty$. A calculation gives that
\[
\vDelta\PotA^\circledast(z,\cdot)=|q|^2\e^{-2mQ}-\delta_z,
\]
which is the same as $\vDelta\PotA(z,\cdot)$. As a consequence, 
$H:=\PotA(z,\cdot)-\PotA^\circledast(z,\cdot)$ is harmonic, and given the decay of 
$\PotA(z,\cdot)$ required by \eqref{eq:prob2B}, it follows from the above bound that
\[
H(w)=(1-\|q\|^2_{2mQ})\log|w|^2+\Ordo(|w|^{-1})
\]
holds as $|w|\to+\infty$. A Liouville-type argument for harmonic functions now applies to 
show that such weak growth forces $H$ to be constant, and in a second step, that 
$\|q\|_{2mQ}=1$ and $H(w)\equiv0$. We conclude that $\PotA(z,w)=\PotA^\circledast(z,w)$,
which allows us to differentiate \eqref{eq:circledA} to obtain
\begin{equation}
\label{eq:circledA2}
\partial_w\PotA(z,w)=\partial_w\PotA^\circledast(z,w):=
\int_\C \frac{|q(\xi)|^2\e^{-2mQ(\xi)}}{w-\xi}\diffA(\xi)-\frac{1}{w-z}=\Ordo(|w|^{-2}).
\end{equation}
Let us write
\[
\mathscr{X}(z,\cdot)=q^{-1}\partial\PotA(z,\cdot),
\]
which by assumption is $C^2$-smooth in the whole punctured plane $\C\setminus\{z\}$. 
Away from the zeros of $q$, we have 
\begin{multline}
\bar\partial\mathscr{X}(z,\cdot)=
\bar\partial q^{-1}\partial\PotA(z,\cdot)=q^{-1}\bar\partial\partial\PotA(z,\cdot)
q^{-1}\vDelta\PotA(z,\cdot)
\\
=q^{-1}(|q|^2\e^{-2mQ}-\delta_z)=\bar q\e^{-2mQ}
-q(z)^{-1}\delta_z,
\end{multline}
and since $\mathscr{X}(z,\cdot)$ is $C^2$-smooth in $\C\setminus\{z\}$ according to
the third line of \eqref{eq:pot-exact-2B}, and $q(z)>0$ holds at the point, this equality 
holds in the sense of distribution theory in the whole plane. Next, since $q$ has degree
precisely equal to $n-1$, we have the decay
\begin{equation}
\label{eq:decayX}
\mathscr{X}(z,w):=\Ordo(|w|^{-n-1})\quad\text{as}\,\,\,|w|\to+\infty.
\end{equation}
If we put 
\begin{equation}
\mathscr{X}^\circledast(z,w):=
\int_\C\frac{\bar q(\xi)\,\e^{-2mQ(\xi)}}{w-\xi}\,\diffA(\xi)
-\frac{q(z)^{-1}}{w-z},
\end{equation}
an elementary estimates gives that $\mathscr{X}^\circledast(z,w)=\Ordo(|w|^{-1})$
as $|w|\to+\infty$. But then the entire function 
$\mathscr{X}(z,\cdot)-\mathscr{X}^\circledast(z,\cdot)$ has to decay as well, which is only 
possible if it vanishes. So $\mathscr{X}^\circledast(z,\cdot)=\mathscr{X}(z,\cdot)$ as functions. 
By finite geometric series expansion of the Cauchy kernel $\mathfrak{c}(z,w)=(w-z)^{-1}$,
as in \S 1.3 in \cite{HedenmalmPotential}, it is possible to show that the decay 
\eqref{eq:decayX} entails that
\[
\int_\C \xi^j q(z)\overline{q(\xi)}\,\e^{-2mQ(\xi)}\diffA(\xi)=z^j,\qquad j=0,\ldots,n-1.
\]
This is the reproducing property in the space of polynomials of degree $\le n-1$, which
determines the function $q(z)\overline{q(\xi)}$ uniquely: 
\[
q(z)\overline{q(\xi)}=\mathrm{k}(z,\xi).
\]
Then $q(z)=\mathrm{k}(z,z)^{\frac12}$ and $q(w)=\mathrm{k}(z,z)^{-\frac12}\mathrm{k}(w,z)$,
as claimed. In view of this, the formula giving $\PotA(z,w)$ is immediate from \eqref{eq:circledA}.
\qed

\section{An approximate potential problem for orthogonal polynomials}
\label{s:approximate}

\subsection{Heuristic derivation of an approximate potential problem}
\label{s:heuristic}
To reduce the quantity of indices, we usually drop indication of the
parameters $\n, n$ and $\tau=\n/n$. 
Hence, we write in the sequel 
$\calS$, $\Omega$, $\Omega_\epsilon$, $\Gamma$, $P$, and $\hatR$.

We return to the potential problem for the orthogonal polynomials.
We will work with an approximate version of the renormalized version of
Problem~\ref{problem:1},
which is more robust. In particular,
the delicate divisibility condition $P^{-1}\partial\,\Pot\in C^2$
will be replaced by a decay condition in the interior
direction of the droplet, which allows us to introduce 
a cut-off function at little cost and ignore the
interior completely.

We will also work with the $L^2$-normalized orthogonal polynomial rather than the monic
polynomial $\pi=\pi_{m,n}$, and we denote it by $P=P_{m,n}$. 
By now it is understood that 
(see e.g.\ \cite{HW-ONP}, \cite{HedenmalmPotential}), to leading order
\[
P_{m,n}\sim c_{m,n}\phi^n \e^{m\mathscr{Q}}F,
\]
where $c_{m,n}$ is a constant and where $\mathscr{Q}$ and $F$
are holomorphic and bounded on $\Omega_\epsilon$, and where in addition
$F$ is bounded away from zero. The function 
$\mathscr{Q}=\mathscr{Q}_\tau$
satisfies $\Re\mathscr{Q}_\tau=Q$ on 
$\Gamma=\Gamma_\tau$, so that 
\[
\tau\log|\phi_\tau| + \Re \mathscr{Q}_\tau=\breve{Q}_\tau,
\] 
and $F$ should admit an asymptotic expansion in non-negative powers
of $\n^{-1}$. We may consequently conclude that
\[
|P|^2\e^{-2\n Q}\sim c_{m,n}^2|F|^2\e^{-2\n \hatR^2}
\]
holds near $\Gamma$,
where we recall that $\hatR^2=Q-\breve{Q}$ holds there.
Since $F$ is zero-free and holomorphic on $\Omega_\epsilon$,
it follows that $\log|F|$ is harmonic there, which allows us to write
\[
F=\tilde{c}_{m,n}
\e^{h+\imag \tilde{h}},\qquad\text{where }\;
\tilde{c}_{m,n}=c_{m,n}^{-1}\n^{\frac14},
\]
where $h$ is a real-valued bounded harmonic function, while 
$\tilde{h}$ is the harmonic conjugate of $h$, normalized
to vanish at infinity.
In terms of the function $h$, we expect
\[
|P|^2\e^{-2 m Q}\sim m^{\frac12}\,\e^{2h-2m\hatR^2}
\]
to hold in a suitably approximate sense.
It is also convenient to relax the above renormalized version of 
Problem~\ref{problem:1} somewhat,
by asking that the potential, now denoted $\Potu$, satisfies the
weaker growth condition
\[
\Potu(z)=\log|z|^2 + \Ordo(1)
\]
at infinity. The solution $(\Potu,P)$ is no longer unique, but the non-uniqueness
is only up to an additive constant in the potential, i.e.\ the solution space
is $\{(\Pot+a,P):a\in\R\}$, where $(\Pot,P)$ is the unique solution to the
renormalized version of Problem~\ref{problem:1} expressed in \eqref{eq:pot-exact-2}
and \eqref{eq:prob2b}. Since $\Potu=\Pot + a$, we have
\begin{equation}\label{eq:pot-full}
\vDelta\, \Potu =\vDelta\,\Pot = |P|^2\e^{-2\n Q}\sim
m^{\frac12}
\,\e^{2h-2 m\hatR^2}.
\end{equation}
The freedom of adding a constant to $\Pot$ will allow
us to ensure that $\Potu$ decays rapidly in the interior direction to 
the droplet $\calS$
at $\Gamma$, and this effectively replaces the divisibility condition
$P^{-1}\partial\Pot=P^{-1}\partial\Potu\in C^2(\C)$ (given that we are 
convinced that the zeros are located further into the interior of $\calS$). 
The decay in the interior direction is encoded by the ansatz
\begin{equation}
\Potu\sim H \ErfN 
+\frac{G}{\sqrt{2\pi m}}\ExpN
\label{eq:Ansatz-full}
\end{equation}
where $H$ is harmonic while $G$ is
smooth, both real-valued. The function $H$ should be defined on the whole of
$\Omega_\epsilon=\Omega_{\tau,\epsilon}$ and have the appropriate growth
\begin{equation}
\label{eq:growthH}
H(z)=\log|z|^2+\Ordo(1)
\end{equation}
at infinity, but $G$ only needs to be defined 
in a small neighborhood of $\Gamma$. Away from $\Gamma$, 
it is heavily suppressed
by the Gaussian $\ExpN$ anyway.
The principal properties of these functions
are that they admit asymptotic expansions
\begin{equation}
\label{eq:asymp-HG}
H=\coeffH_0 + \n^{-1}\coeffH_1 + \ldots,\qquad
G= \coeffG_0 + \n^{-1}\coeffG_1 + \ldots
\end{equation}
with harmonic and smooth coefficient functions $\coeffH_j$ and $\coeffG_j$, respectively,
which for fixed $\tau$ do not depend on the parameter $\n$.
We stress that the function $\hatR$ is not yet uniquely determined apart
from in a neighborhood of the curve $\Gamma$, but $V^2$ is supposed
to be positive throughout
$\Omega_\epsilon\setminus\Gamma$.

We proceed to solve the problem
\begin{equation}\label{eq:pot-full-h}
\vDelta\, \Potu \sim
m^{\frac12}
\,\e^{2h-2 m\hatR^2}
\end{equation}
algorithmically, where $\Potu$ takes the form \eqref{eq:Ansatz-full}
while $h$ admits an asymptotic expansion
\begin{equation}
\label{eq:asymp-h}
h=\coeffh_0+\n^{-1}\coeffh_1 + \n^{-2}\coeffh_2+\ldots
\end{equation}
with bounded harmonic coefficient functions $\{\coeffh_j\}_j$ which
do not depend on $\n$.
The point is that if a pair $(\Potu, h)$ can be found such that
\eqref{eq:pot-full} holds up to a small error, then we obtain a
good approximation of the wave function $|P|^2\e^{-2\n Q}$.

We do not impose any convergence properties on the asymptotic
expansions \eqref{eq:asymp-HG} and \eqref{eq:asymp-h}. 
At first, we will focus on algorithmic aspects
and produce formal solutions to \eqref{eq:pot-full-h} at the
level of asymptotic expansions.
Later on, we will control the error terms that are introduced
by suitably truncating the asymptotic expansions.

\subsection{A preliminary version of the main result}
A preliminary version of the main result
reads as follows. We work under the standing assumptions of 
\S \ref{s:pot-theory}.
\begin{thm}
\label{thm:main-prel}
There exist {\rm (i)} harmonic functions $H\sim \coeffH_0+\frac{1}{\n}\coeffH_1+\ldots$ with
growth $H(z)=\log|z|+O(1)$ as $|z|\to+\infty$,
{\rm (ii)} smooth functions $G\sim \coeffG_0 + \frac{1}{m}\coeffG_1+\ldots$ and {\rm (iii)} bounded harmonic
functions $h\sim \coeffh_0+\frac{1}{m}\coeffh_1+\ldots$, 
such that if $\Potu$ is given by
\[
\Potu= H \ErfN 
+\frac{G}{\sqrt{2\pi m}}\ExpN,
\]
it holds that
\[
\vDelta\Potu \sim m^{\frac12}\e^{2h-2\n V^2}.
\]
As a consequence of this approximate potential equation, 
the $L^2$-normalized orthogonal polynomial $P=P_{\n,n}$ satisfies
\[
|P|^2\e^{-2\n Q}\sim\n^{\frac12}e^{2h-2\n V^2},
\]
which in turn leads to an asymptotic expansion for $P$.
\end{thm}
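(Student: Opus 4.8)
The plan is to construct the triple $(H,G,h)$ by a recursive procedure organized by powers of $m^{-1}$, feeding the ansatz \eqref{eq:Ansatz-full} into \eqref{eq:pot-full-h} and matching the two natural scales that appear: the boundary layer scale, where the relevant variable is $\sqrt{m}\,V$ (so that $\Erf(2\sqrt m V)$ and $\e^{-2mV^2}$ are the building blocks), and the harmonic/exterior scale, where $H$ lives. First I would compute $\vDelta$ of the ansatz. Using $\vDelta H=0$ and the chain rule, $\vDelta\big(H\,\Erf(2\sqrt m V)\big)$ produces a term with $\vDelta V^2$ and terms with $\partial V\cdot\bar\partial H$, all multiplied by $\sqrt m\,\e^{-2mV^2}$ times rational factors in $\sqrt m V$; similarly for the $G$-term one gets $\e^{-2mV^2}$ times a polynomial in $\sqrt m V$ whose coefficients involve $G$, $\partial G$, $\vDelta G$, $\partial V$, $\vDelta V^2$, etc. The upshot is an identity of the shape $\vDelta\Potu = \sqrt m\,\e^{-2mV^2}\,\Phi(\sqrt m V; m)$, where $\Phi$ is an asymptotic series in $m^{-1}$ whose coefficients are smooth functions on $\Omega_\epsilon$ near $\Gamma$, polynomially dependent on the rescaled normal variable. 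The target right-hand side $m^{1/2}\e^{2h-2mV^2}$ is also of this form once one expands $\e^{2h}=1+2h+\ldots$ and re-expands in $m^{-1}$. Matching the two series coefficient-by-coefficient in $m^{-1}$, and within each order matching the coefficients of the successive powers of the normal coordinate, yields at each step a small system of equations for $(\coeffH_j,\coeffG_j,\coeffh_j)$.

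The key structural point — which I expect the paper's \S\S\ref{s:comput}--\ref{s:sol-master} to carry out in detail — is that this system is solvable and essentially triangular. At the lowest order one reads off $\coeffH_0$ as the harmonic function with the prescribed logarithmic growth at infinity and a boundary condition on $\Gamma$ dictated by the requirement that the non-Gaussian ``tail'' of $H\Erf$ outside the droplet reproduces the leading potential; this determines $\coeffH_0$ via a Dirichlet-type problem on $\Omega_\epsilon$, solvable because $\Gamma$ is real-analytic and $\Omega$ is simply connected (use $\phi_\tau$). The leading $\coeffG_0$ is then obtained algebraically from the ``$\vDelta V^2$'' term — here one uses the nondegeneracy $\vDelta V^2 \asymp 1$ near $\Gamma$ and the normalization of $V$ from \S\ref{s:pot-theory} — and $\coeffh_0$ absorbs whatever remains at that order. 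At order $m^{-j}$ the equations for $\coeffH_j,\coeffG_j,\coeffh_j$ have the same principal parts (a Dirichlet problem for $\coeffH_j$, an algebraic/first-order relation for $\coeffG_j$, and a pointwise formula for $\coeffh_j$) with right-hand sides built from the already-constructed lower-order data, so induction on $j$ closes the construction. Throughout, $H$ is globally defined on $\Omega_\epsilon$ with the stated growth, while $G$ and $h$ need only be defined near $\Gamma$, consistent with the Gaussian suppression away from $\Gamma$.

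Once the formal solution is in hand, the passage to the ``as a consequence'' clause goes through Proposition~\ref{lem:exact} (equivalently its renormalized form \eqref{eq:pot-exact-2}--\eqref{eq:prob2b}): I would invoke the characterization that the exact potential $\Pot$ satisfies $\vDelta\Pot=|P|^2\e^{-2mQ}$ together with the divisibility and growth conditions, and argue that the constructed $\Potu$ satisfies the same equation up to a controllably small error, and in particular has the right growth at infinity and decays in the inward normal direction (this is what the $\Erf$-profile is engineered to do). Comparing $\vDelta(\Potu-\Pot)$ and using that the inward decay plays the role of the divisibility condition $P^{-1}\partial\Pot\in C^2$, one gets $|P|^2\e^{-2mQ}\sim\vDelta\Potu\sim m^{1/2}\e^{2h-2mV^2}$ near $\Gamma$. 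Extracting from this the asymptotics of $P$ itself (as opposed to $|P|^2$) is the delicate last move: one writes $|P|^2\e^{-2mQ}\sim m^{1/2}\e^{2h-2mV^2}$, takes logarithms on $\Omega_\epsilon$ where $V^2=Q-\breve Q_\tau>0$ and $h$ is harmonic, and reconstructs $P$ as $m^{1/4}\phi_\tau^{\,n}\e^{m\mathscr{Q}_\tau}\e^{h+\imag\tilde h}$ up to a unimodular constant fixed by the leading coefficient being positive — the harmonic conjugate $\tilde h$ and the holomorphic primitive $\mathscr{Q}_\tau$ being exactly the ingredients anticipated in the heuristic of \S\ref{s:heuristic}.

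The main obstacle, in my view, is not the algebra of the recursion but making the word ``$\sim$'' precise and stable: one must show that the formal series can be truncated at a finite order $N$ so that the residual in $\vDelta\Potu - m^{1/2}\e^{2h-2mV^2}$ is $\Ordo(m^{-N})$ in an appropriate weighted norm adapted to the boundary layer (so that it survives division by the small quantities $\e^{-2mV^2}$ near $\Gamma$), and then that a $\dbar$-surgery argument with Hörmander $L^2$-estimates — as the paper announces for \S\S\ref{s:proof}--\ref{s:truncation} — converts a small residual in the potential equation into a small correction to $P$ without destroying its polynomial character (degree $n$, normalization). Handling the cut-off at $\Gamma$, where $G$ and $h$ are only locally defined and where the zeros of $P$ must be kept strictly interior, is the subtle point; this is precisely why the divisibility condition was relaxed to an inward-decay condition in \S\ref{s:heuristic}, and verifying that this relaxation is harmless (i.e.\ that it still pins down $P$ up to the accuracy claimed) is where the real work lies.
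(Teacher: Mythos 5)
Your overall skeleton is correct and matches the paper: an $\Erf$/Gaussian boundary-layer ansatz, formal matching, then cut-offs and H{\"o}rmander $\bar\partial$-estimates to convert a small residual in the potential equation into the asymptotics of $P$. Your reconstruction of $P$ from $|P|^2$ via $\phi^n\e^{m\mathscr{Q}}\e^{h+\imag\tilde h}$ and your identification of the inward-decay relaxation as the key delicacy are also on target. But the middle of your proposal departs from the paper, and one step as written does not go through.

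Structurally, you claim $\vDelta\Potu = \sqrt m\,\e^{-2m\hatR^2}\,\Phi(\sqrt m \hatR;m)$ with $\Phi$ ``polynomially dependent on the rescaled normal variable,'' and propose to match coefficients of successive powers of $\sqrt m \hatR$. That multi-scale matching does not arise: since $H$ is harmonic, the $\ErfN$ factor drops out of $\vDelta\Potu$ entirely, and the computation (\eqref{eq:lap-ansatz}--\eqref{eq:Lap-U-star}) yields $\ExpN$ times a sum of exactly three terms at powers $m^{3/2},m^{1/2},m^{-1/2}$ with smooth coefficients, with no residual dependence on $\sqrt m\hatR$ and no $\Erf$. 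There is then a single scalar identity at each order --- the master equation \eqref{eq:master-full-mod} --- rather than a family indexed by normal powers.

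The genuine gap is your treatment of $h$. You propose expanding $\e^{2h}=1+2h+\ldots$ and re-expanding in $m^{-1}$, with $\coeffh_0$ ``absorbing whatever remains.'' But $\coeffh_0$ is a bounded harmonic function of unit size, not $\Ordo(m^{-1})$, so Taylor-expanding $\e^{2h}$ about $h=0$ does not produce an asymptotic series in $m^{-1}$ and does not determine $\coeffh_0$; this nonlinearity cannot be linearized away. The paper keeps $\e^{2h}$ intact, restricts the master equation to $\Gamma$ where $\hatR=0$, takes a logarithm, and applies the Poisson extension $\Pop_\Omega$ to obtain \eqref{eq:h-sol}. That same nonlinearity is why the whole system is compressed into a single fixed-point equation $\Tope\E=\E$ for $\E=\s^{-1}(H-2\hatR G)$, with $H$ and $G$ recovered a posteriori via \eqref{eq:H-sol}--\eqref{eq:G-sol} (Weierstrass-type division by $\hatR$ of a numerator that vanishes on $\Gamma$, using $\partial\hatR\neq0$ there --- not an algebraic relation driven by $\vDelta \hatR^2$). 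Your proposed recursion over the triple $(\coeffH_j,\coeffG_j,\coeffh_j)$ would also make the quantitative side considerably harder: the Lipschitz estimates in the Banach scale $\mathscr{H}^\infty_\sigma(\Gamma)$ (Theorem~\ref{thm:Lipschitz}) and the choice of truncation level $k\asymp\sqrt m$ (Theorem~\ref{thm:main-iteration}) are formulated for the single iterate $\Tope^{k+1}[0]$, and reproducing that control order-by-order for three coupled unknowns is markedly less tractable.
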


The coefficient functions in the various asymptotic expansions
get uniquely determined by a nonlinear algorithm which takes the form
of a fixed-point equation. The symbol $\sim$ denotes equality of
asymptotic expansions. By suitably truncating the asymptotic expansions, 
we obtain an error term of order $\e^{-c_0 \sqrt{\n}}$ with $c_0>0$, valid in a neighborhood 
of width $\asymp \n^{-\frac14}$ of $\overline{\C\setminus\calS_\tau}$.
A more precise statement of this theorem 
is contained in Theorem~\ref{thm:main}
below.

\section{Derivation of the master equation}
\label{s:comput}
\subsection{The ansatz and its consequences}
We make the ansatz
\begin{equation}
\Potu= H \ErfN 
+\frac{G}{\sqrt{2\pi m}}\ExpN,
\label{eq:Ansatz-full-exact}
\end{equation}
where $G$ and $H$ are as above, with asymptotic expansions
given by \eqref{eq:asymp-HG}. This means that $\Potu$ is no longer
considered as an exact solution to the above Laplace equation
\eqref{eq:pot-full}, but only in an approximate sense.
Taking the $\bar\partial$-derivative of the ansatz, we find that
\begin{align}
\label{eq:d-bar-ansatz}
\bar\partial\, \Potu&=
\bar\partial H\ErfN +
\sqrt{\frac{2\n}{\pi}}\bar\partial \hatR
(H-2\hatR G)\ExpN
+\frac{\bar\partial G}{\sqrt{2\pi\n}}\ExpN\\
&=\sqrt{\frac{2\n}{\pi}}\bar\partial \hatR
E\ExpN
+\bar\partial H\ErfN +
\frac{\bar\partial G}{\sqrt{2\pi\n}}\ExpN,
\end{align}
where we have introduced the simplifying notation
\begin{equation}
\label{eq:def-W}
E=H-2\hatR G.
\end{equation} 
Taking the $\partial$-derivative of the above expression,
we arrive at
\begin{multline}
\label{eq:lap-ansatz}
\vDelta\,\Potu=\partial\bar\partial\,\Potu
=-2^{\frac52}\n^{\frac32}\pi^{-\frac12}\hatR|\partial\hatR|^2 \,E\ExpN
\\
+2^{\frac12}\n^{\frac12}\pi^{-\frac12}\Big\{
\partial\hatR(\bar\partial H
-2\hatR\bar\partial G)+
\partial\big(E\bar\partial\hatR\big)\Big\}\ExpN
+\frac{\vDelta G}{\sqrt{2\pi\n}}\ExpN.
\end{multline}
By the product rule, we have the identity
\[
-2\hatR \partial\hatR\bar\partial G
=-2\partial\hatR\bar\partial(G\hatR)
+2|\partial\hatR|^2G,
\]
which gives that
\[
\partial \hatR(\bar\partial H-2\hatR\bar\partial G)=\partial\hatR \bar\partial E 
+ 2|\partial \hatR|^2G.
\]
Combining this with \eqref{eq:lap-ansatz} we find that
\begin{multline}
\label{eq:Lap-U-star}
\vDelta\,\Potu
=-2^{\frac52}\n^{\frac32}\pi^{-\frac12}\hatR|\partial\hatR|^2 \,E\ExpN
\\
+2^{\frac12}\n^{\frac12}\pi^{-\frac12}\Big\{
\partial \hatR\bar\partial E
+2|\partial\hatR|^2G+
\partial E\bar\partial\hatR + E\vDelta \hatR\Big\}\ExpN
+\frac{\vDelta G}{\sqrt{2\pi\n}}\ExpN.
\end{multline}
\subsection{The master equation}
As we want to solve the equation
\begin{equation}
\label{eq:pot-eq-exact}
\vDelta\,\Potu=\n^{\frac12}\e^{2h-2\n \hatR^2},
\end{equation}
we are led to consider
\begin{multline}
\label{eq:lap-eq-final}
\bigg\{-2^{\frac52}\n^{\frac32}\pi^{-\frac12}\hatR|\partial\hatR|^2 \,E
+2^{\frac12}\n^{\frac12}\pi^{-\frac12}\Big(2\Re \{\partial \hatR\bar\partial E\}
+2|\partial \hatR|^2G+ E\vDelta \hatR\Big)\\+\frac{\vDelta G}{\sqrt{2\pi\n}}\bigg\}\ExpN
=m^{\frac12}\,\e^{2h-2\n \hatR^2},
\end{multline}
where we recall that $h$ is supposed to be a bounded harmonic
function with an asymptotic expansion of the form \eqref{eq:asymp-h}.
This should hold in the sense of equalities of formal
asymptotic expansions in powers of $\n^{-1}$,
on a neighborhood
of the outer boundary curve $\Gamma=\Gamma_\tau$.
In terms of the functions $E$, $H$, $G$ and $h$, the equation
\eqref{eq:lap-eq-final} posits that
\begin{equation}
\label{eq:master-full-mod}
-4\n\hatR|\partial\hatR|^2 \,E
+2\Re\big\{\partial\hatR \bar\partial E\big\} 
+E\vDelta \hatR +2|\partial\hatR|^2 G
+\tfrac{1}{2\n}\vDelta G=
\Big(\frac{\pi}{2}\Big)^{\frac12}\e^{2h}
\end{equation}
should hold in the sense of equality of asymptotic expansions.
We recall that we require that $h$ is a real-valued bounded harmonic function
on the fattened exterior domain $\Omega_\epsilon$.

\section{Solving the master equation}
\label{s:sol-master}
In this section (and in the sequel) we will use the notation
$\s=\n^{-1}$. While we will specialize to $\s>0$ in the end, 
it is useful to allow the parameter $\s$ to take on complex values. Restated 
in terms of $\s$, the master equation \eqref{eq:master-full-mod} becomes
\begin{equation}
\label{eq:master-full-mod-theta}
-4\s^{-1}\hatR|\partial\hatR|^2 \,E
+2\Re\big\{\partial\hatR \bar\partial E\big\} 
+ E\vDelta \hatR +2|\partial\hatR|^2 G
+\frac{\s}{2}\vDelta G=
\Big(\frac{\pi}{2}\Big)^{\frac12}\e^{2h}.
\end{equation}

\subsection{Reduction of the master equation: Step 1}
We assume that the equation \eqref{eq:master-full-mod-theta} admits a formal
solution tuple $(E,G,h)$, where $E$ takes the form $E=\coeffE_0+\s\coeffE_1+\ldots$ and where 
$(G,H,h)$ are given by \eqref{eq:asymp-HG}--\eqref{eq:asymp-h} with $\n$ replaced by $\s^{-1}$. 
If we multiply \eqref{eq:master-full-mod} by $\s$ and evaluate at $\s=0$, we obtain
\[
4\hatR|\partial\hatR|^2 \,\coeffE_0=0.
\]
Given that this relation needs to hold in a neighborhood of the curve
$\Gamma$ and since $|\partial\hatR|$ does not vanish there, we find that
the leading order coefficient 
$\coeffE_0$ of $E$ must
vanish identically in a neighborhood of $\Gamma$:
\begin{equation}
\label{eq:E0-eq}
\coeffE_0=0.
\end{equation}
It is thus convenient to work with the function $\E:=\s^{-1}E$, where $\E$
should admit an asymptotic expansion
\begin{equation}
\label{eq:asymp-calE2}
\E=\coeffcalE_0+\s\coeffcalE_1+\s^2\coeffcalE_2+\ldots,
\end{equation}
rather than with $E$. If we rewrite \eqref{eq:master-full-mod-theta} 
as an equation for $(\E,H,G,h)$, we obtain
\begin{equation}
\label{eq:master-full-mod2}
-4\hatR|\partial\hatR|^2 \,\E
+2\s\Re\big\{\partial\hatR \bar\partial \E\big\} 
+\s\E\vDelta \hatR +2|\partial\hatR|^2 G
+\tfrac{1}{2\n}\vDelta G=
\Big(\frac{\pi}{2}\Big)^{\frac12}\e^{2h}.
\end{equation}

As $\s \E=H-2\hatR G$ and since $V$ vanishes on $\Gamma$, we have 
$(H-\s\E)\vert_\Gamma=0$. Given that we know that $H$ meets the growth
bound \eqref{eq:growthH}, we obtain the formula
\begin{equation}\label{eq:H-sol}
H=\log|\phi|^2+\s\Pop_{\Omega}[\E].
\end{equation}
Here, we recall that $\phi$ is the 
conformal mapping of $\Omega$ onto the exterior disk 
$\D_\e$, and denote by
\[
\Pop_\Omega[f]=\int_{\Gamma}P_\Omega(z,w)f(w)\diffs(w)
\]
the usual Poisson extension operator for $\Omega$, given
in terms of the Poisson kernel 
$P_{\Omega}$ for the domain $\Omega$.
For sufficiently regular functions $f$, this 
means that $\Pop_\Omega[f]$ is the bounded harmonic extension of
$f$ to $\Omega$, and if $f$ is real-analytic $\Pop_\Omega[f]$ extends
harmonically past $\Gamma$ in the interior direction. If $\epsilon$
is small enough we may assume that $\Pop_\Omega[E]$ extends harmonically
to the whole fattened exterior domain $\Omega_\epsilon$.
The relation $\s\E=H-2\hatR G$ backwards gives that
\begin{equation}
\label{eq:G-sol}
G=\frac{H-\s \E}{2\hatR}=\frac{\s(\Pop_\Omega-\Iop)[\E]+\log|\phi|^2}{2\hatR},
\end{equation}
where we observe that the above ratio is smooth in view of Weierstrass'
preparation theorem. 
The reason is that the numerator is a real-analytic function which
vanishes along $\Gamma$, while $\hatR$ has non-vanishing gradient
along $\Gamma$.
This allows us to get rid of the function $G$ in the 
equation \eqref{eq:master-full-mod}, so that
\begin{multline}
\label{eq:step-reduct-T}
-4\hatR|\partial\hatR|^2 \,\E
+2\s\Re\big(\partial\hatR \bar\partial \E\big)
+\s \E\vDelta \hatR +\frac{|\partial\hatR|^2}{\hatR}\big(\s(\Pop_\Omega-\Iop)\E
+\log|\phi|^2\big)
\\+\frac{\s}{4}\vDelta\frac{\s(\Pop_\Omega-\Iop)\E+\log|\phi|^2}{\hatR}=
\Big(\frac{\pi}{2}\Big)^{\frac12}\e^{2h}.
\end{multline}
\subsection{Reduction of the master equation: Step 2}
In terms of the linear operator
\begin{equation}
\label{eq:Sop-def}
\Sop [f]=\Sop_\s[f]\eqd \partial\hatR \bar\partial f+\bar\partial\hatR\partial f
+f\vDelta \hatR +|\partial\hatR|^2\frac{(\Pop_\Omega-\Iop)f}{\hatR}
+\frac{\s}{4}\vDelta\frac{(\Pop_\Omega-\Iop)f}{\hatR},
\end{equation}
and the expression
\begin{equation}
\label{eq:L-def}
L\eqd
2|\partial \hatR|^2\frac{\log|\phi|}{\hatR} + \frac{\s}{2}\vDelta
\frac{\log|\phi|}{\hatR},
\end{equation}
the equation \eqref{eq:step-reduct-T} reads, given that $\E$ is real-valued, 
\begin{equation}
\label{eq:reduct-Top2}
-4 \hatR|\partial\hatR|^2 \E+\s\Sop[\E]+L=\Big(\frac{\pi}{2}\Big)^{\frac12}\e^{2h}.
\end{equation}
Hence, since $V$ vanishes on $\Gamma$, the equation for $\E$ and $h$ becomes
\[
L+\s\Sop [\E] = \Big(\frac{\pi}{2}\Big)^{\frac12}\e^{2h}
\qquad\text{on }\;\,\Gamma,
\]
which is equivalent to the equation
\[
\log(L+\s\Sop[\E]) = 2h + \frac{1}{2}\log\frac{\pi}{2}   \qquad\text{on }\;\,\Gamma
\]
so that $h$ is determined from $E$ by
\begin{equation}
\label{eq:h-sol}
h=\frac12\Pop_\Omega\big[\log(L+\s\Sop [\E])\big]-\frac{1}{4}\log\frac{\pi}{2},
\end{equation}
provided that $L+\s\Sop[\E]>0$. We recall $\E$ should admit a full asymptotic expansion
\eqref{eq:asymp-calE2} with leading order behavior $\E=\coeffcalE_0+\Ordo(\s)$,
and it is clear that $L>0$ holds on $\Gamma$ when $|\s|$
is sufficiently small. Hence, 
$L+\s\Sop[\E]>0$ will be satisfied on $\Gamma$ 
for sufficiently small $|\s|$.
In view of the relation \eqref{eq:h-sol}, we 
may rid the master equation of the dependence on $h$ as well, 
and we obtain the following non-linear equation for $\E$:
\begin{equation}
\label{eq:master-simple-W}
-4 \hatR|\partial \hatR|^2 \E + \Big\{L+\s\Sop[\E] - 
\exp\Big(\Pop_\Omega\big[\log(L+\s\Sop [\E])\big]\Big)\Big\}=0.
\end{equation}
The equation \eqref{eq:master-simple-W} may equivalently be stated as
\begin{equation}
\label{eq:master-E'}
\E = \frac{1}{4\hatR|\partial\hatR|^2}
\Big\{L+\s\,\Sop[\E] -\exp\Big(\Pop_\Omega\big[\log(L+\s\,\Sop [\E])\big]\Big)\Big\}
\end{equation}
where we note that the division by $\hatR$ does not produce any
singularity, as the expression in brackets vanishes along $\Gamma$.
This is, as before, related with the Weierstrass division theorem
(see, e.g., Section~6.1 in \cite{HW-ONP} and Section 5.5 in \cite{HedenmalmPotential}).
Moreover, the division by $|\partial \hatR|^2$ is harmless, since $\partial V\neq 0$
in a neighborhood of the curve $\Gamma$.

\subsection{Solution in terms of an asymptotic expansion}
\label{ss:sol-asymp}
We proceed to outline an algorithm which will determine a formal solution $\E$ 
to \eqref{eq:master-E'} in the form of an asymptotic expansion \eqref{eq:asymp-calE2} in $\s$.
In the proofs below, we will take an alternative approach to deriving approximate solutions.
Here, we focus on explaining the ideas, and suppress the technical details for the reader's convenience.

\subsubsection*{The term $\coeffcalE_0$}
The main term $\coeffcalE_0$ is obtained by evaluating both sides of \eqref{eq:master-E'}
at $\s=0$. This gives
\begin{equation}
\label{eq:calE0-def}
\coeffcalE_0=\E\big\vert_{\s=0}=\frac{1}{4\hatR|\partial\hatR|^2}
\Big\{\coeffL_0 - \exp\big(\Pop_\Omega\big[\log \coeffL_0\big]\big)\Big\},
\end{equation}
where we use the notation $\coeffL_0=L\vert_{\s=0}$. 
Since $\coeffL_0-\exp(\Pop_\Omega[\log \coeffL_0])$ vanishes along $\Gamma$
and, the
division by $\hatR$ does not produce any singularity, and, since $\partial\hatR\ne 0$ on $\Gamma$ the term
$\coeffcalE_0$ becomes real-analytic on some neighborhood of $\Gamma$.

\subsubsection*{The term $\coeffcalE_1$} In order to obtain 
the coefficient function $\coeffcalE_1$, we apply
Taylor expansion in $\s$ in equation \eqref{eq:master-E'}. 
In particular, the expansion of the left-hand side is
\begin{equation}
\label{eq:calE0-def}
\E=\coeffcalE_0+\s\coeffcalE_1+\Ordo(|\s|^2),
\end{equation}
where, for formal Taylor series $f$ and $g$, 
the notation $f=g+\Ordo(\s^{k+1})$ means that the Taylor coefficients of $f$ and $g$
agree up to order $k$.
If we introduce the notation $L=\coeffL_0+\s \coeffL_1$ and $\Sop=\coeffSop_0+\s\coeffSop_1$ 
where $\coeffL_j$ and $\coeffSop_j$ do not depend on $\s$ (cf.\ \eqref{eq:Sop-def} and \eqref{eq:L-def}), 
the expression in brackets on the right-hand side of \eqref{eq:master-E'} becomes
\begin{multline}
L+\s\,\Sop[\E] -\exp\Big(\Pop_\Omega\big[\log(L+\s\,\Sop [\E])\big]\Big)
\\
=\coeffL_0+\s (\coeffL_1 + \Sop[\E]\big) 
-\e^{\Pop_\Omega \log \coeffL_0}\exp\big(\Pop_\Omega\log\big(1+\s\tfrac{\coeffL_1+\Sop[\E]}{\coeffL_0}\big)\big)\\
=\coeffL_0-\e^{\Pop_\Omega \log \coeffL_0}+\s (\coeffL_1 + \coeffSop_0[\E]+\s\coeffSop_1[\E]\big) 
\\-\e^{\Pop_\Omega \log \coeffL_0}\Big[\exp\big(\Pop_\Omega\log\big(1+\s\tfrac{\coeffL_1+\coeffSop_0[\E]
+\s\coeffSop_1[\E]}{\coeffL_0}\big)\big)-1\Big].
\end{multline}
Inserting the expansion \eqref{eq:asymp-calE2} of $\E$ and Taylor expanding the logarithm to first order, 
we obtain the Taylor expansion
\begin{multline}
L+\s\,\Sop[\E] -\exp\Big(\Pop_\Omega\big[\log(L+\s\,\Sop [\E])\big]\Big)
\\
=\coeffL_0-\e^{\Pop_\Omega \log \coeffL_0}+\s \Big(\coeffL_1 + \coeffSop_0[\coeffcalE_0]
-\e^{\Pop_\Omega \log \coeffL_0}\Pop_\Omega\Big[\tfrac{\coeffL_1+\coeffSop_0[\coeffcalE_0]}{\coeffL_0}\Big]\Big)
+\Ordo(|\s|^2).
\end{multline}
We hence see that
the the equality \eqref{eq:master-E'} up to terms of order 1 in $\s$ amounts to
\begin{equation}
\label{eq:E1-def}
\coeffcalE_1=\frac{1}{4\hatR |\partial\hatR|^2}\Big(\coeffL_1 + \coeffSop_0[\coeffcalE_0]
-\e^{\Pop_\Omega \log \coeffL_0}\Pop_\Omega\Big[\tfrac{\coeffL_1+\coeffSop_0[\coeffcalE_0]}{\coeffL_0}\Big]\Big).
\end{equation}
On the curve $\Gamma$, we have
\[
\coeffL_1 + \coeffSop_0[\coeffcalE_0]-\e^{\Pop_\Omega \log \coeffL_0}\Pop_\Omega
\Big[\tfrac{\coeffL_1+\coeffSop_0[\coeffE_0]}{\coeffL_0}\Big]
=\coeffL_1+\coeffSop_0[\coeffcalE_0]-\e^{\log \coeffL_0}\tfrac{\coeffL_1+\coeffSop_0[\coeffcalE_0]}{\coeffL_0}=0,
\]
which means that we may divide smoothly by $\hatR|\partial\hatR|^2$ in \eqref{eq:E1-def}, 
so $\E_1$ becomes real-analytic 
on a neighborhood of $\Gamma$.

\subsubsection*{The general term} 
We now describe how to obtain the coefficient functions $\coeffcalE_j$
for $j\ge 2$. As we enter the general step in the algorithm, 
we have determined the coefficient functions $\coeffcalE_{0}, \ldots,\coeffcalE_{k}$
for some given positive integer $k$.
In view of \eqref{eq:master-E'}, the next coefficient, $\coeffcalE_{k+1}$, 
must be equal to the $(k+1)$-st Taylor coefficient of
\[
\frac{1}{4\hatR|\partial\hatR|^2}
\Big\{L+\s\,\Sop[\E] -\exp\Big(\Pop_\Omega\big[\log(L+\s\,\Sop [\E])\big]\Big)\Big\}.
\]
We will refrain from writing the explicit formula for $\coeffcalE_{k+1}$, but 
we should explain that it may be expressed in terms of the known coefficient 
functions $\coeffcalE_j$ for $j\le k$, the operators $\Sop$ and $\Pop_\Omega$ 
and the function $L$. 
To see this, we first introduce the notation
\[
\E=\sum_{j=0}^k\s^j \coeffcalE_j + \sum_{j=k+1}^\infty \s^j\coeffcalE_j=:\calE^{\rm I} 
+ \s^{k+1}\calE^{\rm II},
\]
where we stress that $\calE^{{\rm I}}$ is considered \emph{known} at this stage,
and find that
\begin{multline}
L+\s\,\Sop[\E] -\exp\Big(\Pop_\Omega\big[\log(L+\s\,\Sop [\E])\big]\Big)\\
=L+\s\,\Sop[\E] -\e^{\Pop_\Omega\log \coeffL_0}\exp\Big(\Pop_\Omega\big[\log(1+\s\,\Sop [\E]/\coeffL_0)\big]\Big)
\\=L+\s\,\Sop[\E] -\e^{\Pop_\Omega\log \coeffL_0}
+\e^{\Pop_\Omega\log \coeffL_0}\exp\Big(\Pop_\Omega\big[\log\big(1+\,\s(\Sop [\E^{\rm I}]+
\s^{k+2}\Sop[\E^{\rm II}])/\coeffL_0\big)\big]\Big).
\end{multline}
Next, we rewrite the expression in the exponent on the right-hand side as
\begin{multline}
\Pop_\Omega\big[\log(1+\,\big(\s\Sop [\E^{\rm I}]+
\s^{k+2}\Sop[\E^{\rm II}]\big)/\coeffL_0)\big]
\\
=\Pop_\Omega\Big[\log\big(1+\,\s\Sop [\E^{\rm I}]/\coeffL_0\big)+
\log\Big(1+\s^{k+2}\frac{\Sop[\E^{\rm II}]}{\coeffL_0+\s\Sop[\E^{\rm I}]}\Big)\Big]
\\
=\Pop_\Omega\Big[\log\big(1+\,\s\Sop [\E^{\rm I}]/\coeffL_0\big)\Big]+\Ordo(\s^{k+2}),
\end{multline}
where the last equality holds since $\coeffL_0$ is non-zero on $\Gamma$ 
for sufficiently small $\s$. As a consequence, we obtain that
\begin{multline}
\label{eq:Ck-pre}
L+\s\,\Sop[\E] -\exp\Big(\Pop_\Omega\big[\log(L+\s\,\Sop [\E])\big]\Big)\\
=L+\s\Sop[\calE^{\rm I}]-
\e^{\Pop_\Omega\log \coeffL_0}\exp\Big(\Pop_\Omega\big[\log(1+\s\,
\Sop [\calE^{\rm I}]/\coeffL_0)\big]\Big)
+\Ordo(\s^{k+2}),
\end{multline}
which shows that the $(k+1)$-st Taylor coefficient of the right-hand side of the
master equation \eqref{eq:master-E'} 
only depends on $\calE^{\rm I}$; and not on 
the higher order terms contained in $\calE^{\rm II}$.
Combining \eqref{eq:master-E'} with \eqref{eq:Ck-pre}, we obtain the structural
formula
\begin{equation}
\label{eq:general-term-k+1}
\E_{k+1}=\frac{1}{4\hatR|\partial\hatR|^2}\mathfrak{F}_k(\E_0,\ldots,\E_k)
\end{equation}
where the expression $\mathfrak{F}_k(\E_0,\ldots,\E_k)$ can be computed in terms of known functions.
As in the previous steps, the fact that 
\[
L+\s\,\Sop[\E] -\exp\Big(\Pop_\Omega\big[\log(L+\s\,\Sop [\E])\big]\Big)=0\quad\text{on }\,\Gamma
\]
ensures $\mathfrak{F}_{k}$ vanishes on $\Gamma$, and no singularity is created 
when we divide by $\hatR|\partial\hatR|^2$ in \eqref{eq:general-term-k+1} with $j=k+1$.
Hence, $\coeffcalE_{k+1}$ is real-analytic in a neighborhood of $\Gamma$.

\bigskip

By induction, the above algorithm produces a sequence $\coeffcalE_0,\coeffcalE_1,\coeffcalE_2,\ldots$ of coefficient
functions, so that $\E=\coeffcalE_0+\s\coeffcalE_1+\s^2\coeffcalE_2 + \ldots$ supplies a formal
solution to \eqref{eq:master-E'}.

\subsection{The asymptotic expansion of the coefficient functions}
\label{s:approx-coeff}
We recall the equations \eqref{eq:H-sol}, \eqref{eq:G-sol}
and \eqref{eq:h-sol} for the remaining coefficient functions involved
in the approximate potential equation \eqref{eq:master-full-mod}, 
which express $H$, $G$ and $h$ in terms of
$E=\s\E$. With $\E$ denoting the formal asymptotic obtained
expansion in the previous section, we let $\E^\approx$ 
denote a suitable approximation. For instance, we may choose
to truncate the series at a level $k=k_\s$
\[
\E^{\approx}=\coeffcalE_0+\s\coeffcalE_1+\ldots + \s^k\coeffcalE_k.
\]
It turns out to be convenient for estimates to do things somewhat differently
and only truncate at the end, but the approximation $\E^\approx$ will
have a growing number of coefficients equal to $\coeffcalE_j$; 
we discuss this more in the following section.
Given $\E^\approx$, we put
\begin{equation}
\label{eq:def-H-H-approx}
H^{\approx}=\log|\phi|^2+\s\Pop_\Omega[\E^{\approx}],
\qquad G^{\approx}=\frac{H^{\approx}-\s \E^{\approx}}{2\hatR}
\end{equation}
and finally
\begin{equation}
\label{eq:def-h-approx}
h^{\approx}=\Pop_\Omega\log\big(L+\s\Sop[\E^{\approx}])
-\frac{1}{4}\log\frac{\pi}{2}.
\end{equation}
The function $h^\approx$ admits an asymptotic expansion $h^\approx=\coeffh_0^\approx+\s\coeffh_1^\approx+\ldots$,
and, provided that the approximate solution $\coeffcalE$ has correct coefficients up to order $k$, 
we have $\coeffh^\approx_j=\coeffh_j$ for $j\le k+1$. The leading coefficient function becomes
\[
\coeffh_0=\Pop_\Omega\log(\coeffL_0)-\frac14\log\frac{\pi}{2}
\]
and, by Taylor expanding the logarithm, we see that the subsequent corrections are given by
\[
\coeffh_1=\Pop_\Omega\Big\{\frac{\coeffL_1+\coeffSop_0[\coeffcalE_0]}{\coeffL_0}\Big\}
\]
and
\[
\coeffh_2=\Pop_\Omega\Big\{\frac{\coeffSop_1[\coeffcalE_0]+
\coeffSop_0[\coeffcalE_1]}{\coeffL_0}-\frac12\coeffh_1^2\Big\},
\]
where $\coeffcalE_0$ and $\coeffcalE_1$ are given by \eqref{eq:calE0-def} and \eqref{eq:E1-def}, respectively.
Here, we recall that $L=\coeffL_0+\s\coeffL_0$ 
and $\Sop=\coeffSop_0+\s\coeffSop_1$, where the coefficients $\coeffL_j$ and $\coeffSop_j$
do not depend on $\s$.

In the end, we will further truncate $h^{\approx}$ by replacing it with its Taylor polynomial
$\mathfrak{h}_{k}$ in $\s$, for a suitable choice of $k$.
With these functions in hand, we will construct 
a suitably approximate local potential for $\n^{\frac12}\e^{2h^\approx-2\n \hatR^2}$ from the ansatz
\eqref{eq:Ansatz-full},
and derive the asymptotics of the orthogonal polynomial $P=P_{\n,n}$ from this fact; see 
Proposition~\ref{prop:approx-Pot-eq} and Theorem~\ref{thm:main}.

\subsection{The key estimates}
The equation \eqref{eq:master-E'} may equivalently 
be stated as the fixed point equation
\begin{equation}
\label{eq:Top-fixpt}
\Tope \E=\E,
\end{equation} 
where $\Tope$ denotes the nonlinear operator
\begin{equation}\label{eq:Top-def-prel}
\Tope[f]=\Tope_\s[f]\eqd \frac{1}{4\hatR|\partial\hatR|^2}
\Big\{L+\s\,\Sop[f] - 
\exp\Big(\Pop_\Omega\big[\log(L+\s\,\Sop [f])\big]\Big)\Big\},
\end{equation}
where $\Sop$ and $L$ were defined in \eqref{eq:Sop-def} 
and \eqref{eq:L-def}, respectively. We remark that the expression in brackets
vanishes on $\Gamma$, so the division by $\hatR|\partial V|^2$ does not create any
singularity.

Rather than truncating the asymptotic expansion of $\E$, we will
obtain the approximate solution $\E^{\approx}$ as an iterate $\E^{\approx}=\Tope^{k+1}[0]$
of the nonlinear operator $\Tope$. 
While the operator $\Sop$ preserves real-analyticity in some neighborhood
of $\Gamma$, its iterates can in fact grow quite wildly. 
Hence, we need to carefully choose the number $k=k_\s$ of iterates of $\Tope$
in relation to the parameter $\s$, and only keep iterating as long as the size of 
$\Tope^{k+1}[0]$ remains suitably controlled. 

Below, we will prove estimates which imply the following lemmas. 
We recall that while in the end we will apply the results with $\s=\frac1\n$,
it is convenient to allow $\s$ to take complex values.

\begin{lem}
\label{lem:approx-fix-pt-prel}
There exist positive constants $C_0$, $c_0$, $\varepsilon$ and $\varrho$ and a neighborhood
$\mathcal{N}$ of $\Gamma$, all depending only on $\hatR$ and $\phi$, such that
if $k=k_\s$ is the integer part of $\varepsilon |\s|^{-\frac12}$, 
the function $\E^\approx=\Tope^{k+1}[0]$ satisfies
\begin{equation}
\label{eq:Ek-main}
\sup_{z\in\mathcal{N}}
\big|\E^\approx(z)-\Tope[\E^\approx](z)\big|
\le C_0\exp\big(-c_0|\s|^{-\frac12}\big)
\end{equation}
for $\s\in\D(0,\varrho)$.
\end{lem}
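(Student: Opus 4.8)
The plan is to estimate the iterates $\Tope^{j}[0]$ in a scale of norms on shrinking neighborhoods of $\Gamma$, exploiting the fact that $\Tope$ is built from the real-analytic operators $\Sop$ and $\Pop_\Omega$ together with the explicit function $L$, all multiplied by the small parameter $\s$ wherever $\E$ enters. The key structural feature, visible already in \eqref{eq:Top-def-prel}, is that $\Tope[f]-\Tope[g]$ is of order $\s$ times a Lipschitz-type expression in $f-g$, but the Lipschitz constant involves a \emph{loss of analyticity width}: the operator $\Sop$ in \eqref{eq:Sop-def} contains first-order derivatives $\partial,\bar\partial$ as well as the nonlocal Poisson operator $\Pop_\Omega$ and a further Laplacian $\vDelta$, so passing from functions analytic on a strip of width $r$ to their images costs a factor like $r^{-2}$ (from the two derivatives in $\tfrac{\s}{4}\vDelta(\cdots)/\hatR$). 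I would therefore set up a Cauchy-type estimate on a nested family of neighborhoods $\mathcal{N}_r$ of $\Gamma$, with $r$ ranging in a fixed interval, and track how the sup-norm of $\Tope^{j}[0]$ on $\mathcal{N}_{r_j}$ grows as $r_j\downarrow$.

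\textbf{Key steps.} First I would fix a reference neighborhood $\mathcal{N}_0$ of $\Gamma$ on which $\hatR$ is real-analytic with non-vanishing gradient (so the divisions by $\hatR$ and $|\partial\hatR|^2$ are harmless by Weierstrass division, as noted after \eqref{eq:master-E'}), on which $\phi$ is analytic and zero-free, and on which $L|_{\s=0}=\coeffL_0>0$; this is possible for $|\s|<\varrho$ with $\varrho$ small. Second, I would prove a one-step estimate of the form
\[
\sup_{\mathcal{N}_{r'}}\big|\Tope[f]-\Tope[g]\big|
\le \frac{C|\s|}{(r-r')^{2}}\sup_{\mathcal{N}_{r}}|f-g|,
\qquad
\sup_{\mathcal{N}_{r'}}\big|\Tope[0]\big|\le C,
\]
valid whenever $f,g$ are real-analytic on $\mathcal{N}_r$ with $\|f\|,\|g\|$ bounded by a fixed constant, using Cauchy estimates to control the derivatives in $\Sop$ and standard bounds for $\Pop_\Omega$ (the harmonic/Poisson extension gains analyticity back on a slightly smaller strip, which is exactly where the width is consumed). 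Third, choosing the geometric schedule $r_j = r_0 - \delta\sum_{i\le j} 2^{-i}$ so that $r_j\downarrow r_\infty>0$ and $r_j-r_{j+1}\asymp \delta 2^{-j}$, the contraction factor at step $j$ is $\asymp |\s|\,4^{j}\delta^{-2}$; iterating $k$ times gives a bound like $(C|\s|)^{k}4^{k^2}$-ish — more precisely the product telescopes to something controlled by $\prod_{j\le k}(C|\s| 4^{j}) \asymp (C|\s|)^{k} 2^{k^2}$. Taking $k=k_\s=\lfloor \varepsilon|\s|^{-1/2}\rfloor$ and optimizing, the dominant behavior is $\exp\big(k\log(C|\s|) + (\log 2)k^2\big)$; with $k\asymp\varepsilon|\s|^{-1/2}$ one has $k\log|\s|\asymp -\varepsilon|\s|^{-1/2}\log(1/|\s|)$ dominating the $k^2\asymp\varepsilon^2|\s|^{-1}$ term once $\varepsilon$ is small enough, yielding $\le C_0\exp(-c_0|\s|^{-1/2})$ for the difference $\E^\approx-\Tope[\E^\approx]=\Tope^{k+1}[0]-\Tope^{k+2}[0]$. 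Fourth, I would note that $\Tope^{k+1}[0]$ stays uniformly bounded along the way (so the Lipschitz estimate applies at every step, and the neighborhood $\mathcal{N}$ in the statement can be taken to be $\mathcal{N}_{r_\infty}$), which closes the induction.

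\textbf{Main obstacle.} The delicate point is the bookkeeping of the analyticity-width loss against the smallness of $\s$: the two spatial derivatives in the term $\tfrac{\s}{4}\vDelta\big((\Pop_\Omega-\Iop)f\big)/\hatR$ of $\Sop$ force a $(r-r')^{-2}$ loss per iteration, so the naive iterate bound degrades like $2^{k^2}$, and it is only the logarithmic gain $\log(1/|\s|)$ in the factor $|\s|$ per step — together with the calibration $k\asymp|\s|^{-1/2}$ rather than $k\asymp|\s|^{-1}$ — that makes the product summable to $\exp(-c_0|\s|^{-1/2})$. Getting the constants to line up (choosing $\varepsilon$ small relative to the implied constants in the Cauchy estimates, and $\varrho$ small relative to $\varepsilon$) is the crux; once the one-step estimate is in hand, everything else is a routine, if careful, induction. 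A secondary technical nuisance is justifying the expansions of $\log$ and $\exp$ in \eqref{eq:Top-def-prel}: one must keep $L+\s\Sop[f]$ bounded away from $0$ and $\infty$ uniformly over the iterates, which follows from the uniform boundedness of $\Tope^{j}[0]$ and the positivity of $\coeffL_0$ on $\Gamma$, but needs to be threaded through the induction hypothesis.
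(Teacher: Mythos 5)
Your overall strategy matches the paper's: iterate $\Tope$ in a decreasing scale of analyticity neighborhoods of $\Gamma$, control each step with a Lipschitz estimate that loses a power of the width increment, track the boundedness of the iterates so the local Lipschitz bound applies at every step, and calibrate the number of iterations to $k\asymp|\s|^{-1/2}$. The one-step bound $\lVert\Tope[f]-\Tope[g]\rVert_{r'}\lesssim |\s|(r-r')^{-2}\lVert f-g\rVert_r$ is also the right leading-order shape (the paper's Corollary~\ref{cor:bd-T-Lip} has a further $|\s|/(\sigma-\sigma')^4$ term, but with the calibration $k\asymp|\s|^{-1/2}$ and increments $\asymp 1/k$ it is of the same order, so this is not the issue).

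The genuine gap is the \emph{schedule} of the shrinking radii, and it is not a cosmetic choice. With the geometric decrement $r_j-r_{j+1}\asymp\delta 2^{-j}$, the step-$j$ contraction factor is $\asymp |\s|4^{j}/\delta^2$, so the accumulated product is
\[
\prod_{j=0}^{k-1}\frac{C|\s|4^{j}}{\delta^2}
\;\asymp\;\Big(\frac{C|\s|}{\delta^2}\Big)^{k}\,4^{k(k-1)/2}
\;\asymp\;\exp\!\Big(k\log\frac{C|\s|}{\delta^2}+(\log 2)\,k^2\Big).
\]
Now plug in $k=\varepsilon|\s|^{-1/2}$. The negative part of the exponent, $k\log|\s|$, is of size $\asymp |\s|^{-1/2}\log(1/|\s|)$, while the positive part $(\log 2)k^2$ is of size $\asymp |\s|^{-1}$. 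As $|\s|\to0$ the $k^2$ term dominates \emph{regardless of how small the fixed $\varepsilon>0$ is}: one would need $\varepsilon\ll|\s|^{1/2}\log(1/|\s|)\to0$, which is incompatible with $\varepsilon$ being a fixed constant in the statement of the lemma. So the accumulated bound is $\exp(+c|\s|^{-1})\to+\infty$, not $\exp(-c_0|\s|^{-1/2})$. The claim in your "Main obstacle" paragraph that the logarithmic gain from $|\s|$ defeats the $k^2$ loss is therefore reversed.

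The paper avoids this by using a \emph{linear} schedule $\sigma_j=\sigma^*\big(1-\frac{j}{2k}\big)$, so that $\sigma_j-\sigma_{j+1}=\sigma^*/(2k)$ is the \emph{same} at every step. The per-step contraction factor then becomes $\asymp|\s|k^2$ (uniformly in $j$), and the product telescopes to $\big(\tfrac12 C_2|\s|k^2\big)^{k}$. With $k\le C_2^{-1/2}|\s|^{-1/2}$ one has $\tfrac12 C_2|\s|k^2\le\tfrac12$, giving a genuine geometric decay $\le 2^{-k}=\exp(-c_0|\s|^{-1/2})$ (Theorem~\ref{thm:main-iteration}, then restrict to the diagonal and take $\mathcal{N}=\phi^{-1}(\A(\tfrac13\sigma^*))$). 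The moral is that when a loss $(\sigma-\sigma')^{-p}$ must be paid $k$ times over a fixed total budget, equal increments are the correct choice: they produce a loss of order $k^{pk}$, whereas the geometric schedule produces $2^{O(k^2)}$, and only the former can be absorbed by $|\s|^k$ with $k\asymp|\s|^{-1/2}$. Everything else in your sketch (boundedness of the iterates by $r_0$, positivity of $L+\s\Sop[f]$ along the iteration, Weierstrass division to remove the $\hatR$ denominator) is correct and matches the paper; only the choice of schedule needs to be replaced.
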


Recall the definitions of $H^\approx$, $G^\approx$
and $h^\approx$ from \eqref{eq:def-H-H-approx}--\eqref{eq:def-h-approx}.
\begin{lem}
\label{lem:bdd-coeff-prel}
With $\mathcal{N}$ and $\varrho$ as in Lemma~\ref{lem:approx-fix-pt-prel},
the functions $\E^\approx$, $H^\approx$, $G^\approx$ are real analytic on $\mathcal{N}$,
while $h^\approx$ is bounded and harmonic on $\mathcal{N}\cup\Omega$ for $\s\in\D(0,\varrho)$.
In addition, the $C^1(\mathcal{N})$--norms of the 
functions $\E^\approx$, $H^\approx$, $G^\approx$, $h^\approx$ and the harmonic conjugate
$(h^\approx)^*$ of $h^\approx$ in $\Omega$
are uniformly bounded for $\s\in\D(0,\varrho)$.
\end{lem}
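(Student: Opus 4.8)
The plan is to deduce this statement from the quantitative estimates of \S\ref{s:proof}; we indicate the structure here. We work on a scale of Banach spaces of real-analytic functions: fixing a real-analytic collar of the curve $\Gamma$ inside the fattened exterior domain $\Omega_\epsilon$ on which, by the standing assumptions of \S\ref{s:pot-theory}, the functions $\hatR$ and $\phi$ are real-analytic, $\partial\hatR$ is non-vanishing, and $\hatR$ vanishes to exactly first order along $\Gamma$, we let $X_\rho$ denote the space of real-analytic functions on a complex neighborhood of $\Gamma$ of width $\rho$ (thought of through their holomorphic extensions), with the sup norm, for $\rho\in(\rho_\infty,\rho_0]$; here $\mathcal{N}$ is the neighborhood of width $\rho_\infty$. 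Since $H^\approx$, $G^\approx$, $h^\approx$ and $(h^\approx)^*$ are all produced from $\E^\approx$ through the explicit formulas \eqref{eq:def-H-H-approx}--\eqref{eq:def-h-approx}, the whole lemma reduces to (i) showing that $\E^\approx=\Tope^{k_\s+1}[0]$ lies in a fixed ball of $X_{\mathcal{N}}$ for all $\s\in\D(0,\varrho)$, and (ii) checking that the four operations producing $H^\approx$, $G^\approx$, $h^\approx$ and $(h^\approx)^*$ are bounded maps between the relevant spaces.

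For (i), I would first check that $\Tope=\Tope_\s$ respects the scale. The operator $\Sop$ of \eqref{eq:Sop-def} maps $X_\rho$ into $X_{\rho'}$ for $\rho'<\rho$: the first-order terms $\partial\hatR\,\dbar f+\dbar\hatR\,\partial f+f\vDelta\hatR$ cost a factor $\Ordo((\rho-\rho')^{-1})$ by Cauchy-type estimates for real-analytic functions, while the terms built from $(\Pop_\Omega-\Iop)f$, which vanishes along $\Gamma$, are handled by a bounded Weierstrass division by $\hatR$ (using that $\hatR$ vanishes to first order along $\Gamma$), the extra Laplacian costing two further derivatives but carrying the harmless factor $\s$; moreover $\Pop_\Omega$ sends real-analytic data on $\Gamma$ to functions harmonic on $\Omega$ and real-analytic on all of $\Omega_\epsilon$. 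Since $L=\coeffL_0+\s\coeffL_1$ has $\coeffL_0>0$ on $\Gamma$, for $|\s|$ small and $f$ in a fixed ball one has $L+\s\Sop[f]>0$ near $\Gamma$, so $\log(L+\s\Sop[f])$ is real-analytic there and composition with $\exp$ stays on the scale; and the bracketed expression in \eqref{eq:Top-def-prel} vanishes along $\Gamma$ by the reproducing property of $\Pop_\Omega$, so the concluding division by $4\hatR|\partial\hatR|^2$ is again a bounded Weierstrass division. Hence each iterate $\Tope^j[0]$ is real-analytic on the corresponding neighborhood, and $\Tope$ carries a fixed ball of $X_\rho$ into $X_{\rho'}$ with a bound consisting of an $\s$-independent contribution coming from $L$, plus a term controlled by $|\s|$ and inverse powers of $\rho-\rho'$. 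I would then distribute the width budget $\rho_0-\rho_\infty$ across the $k_\s=\lfloor\varepsilon|\s|^{-1/2}\rfloor$ iterations, each step losing $\asymp(\rho_0-\rho_\infty)/k_\s$ of width; the $\s$-dependent per-step factor is then $\asymp|\s|\,k_\s\asymp|\s|^{1/2}$, below $1$ for $\s\in\D(0,\varrho)$, so (since the iteration starts from $0$) every iterate $\Tope^j[0]$ with $j\le k_\s+1$ remains in one fixed ball of $X_{\mathcal{N}}$, uniformly in $\s$ -- this is exactly why $k_\s$ is tuned to order $|\s|^{-1/2}$. In particular $\|\E^\approx\|_{X_{\mathcal{N}}}$ is bounded uniformly, and a Cauchy estimate on a slightly smaller neighborhood yields the uniform $C^1(\mathcal{N})$ bound for $\E^\approx$.

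For (ii), granting the bound on $\E^\approx$, the remaining assertions follow by propagating it through \eqref{eq:def-H-H-approx}--\eqref{eq:def-h-approx}. The function $\log|\phi|^2$ is a fixed real-analytic (and harmonic) function near $\Gamma$, while $\Pop_\Omega[\E^\approx]$ is harmonic on $\Omega$ and extends harmonically across $\Gamma$ to $\mathcal{N}\cup\Omega$, with $C^1$-norm controlled by $\|\E^\approx\|_{C^1(\Gamma)}$; hence $H^\approx$ is real-analytic on $\mathcal{N}$ with a uniform $C^1(\mathcal{N})$ bound. On $\Gamma$ one has $\log|\phi|^2=0$ and $\Pop_\Omega[\E^\approx]=\E^\approx$, so the numerator $H^\approx-\s\E^\approx$ of $G^\approx$ vanishes along $\Gamma$, whence $G^\approx$ arises from it by a bounded Weierstrass division by $2\hatR$ and is real-analytic on $\mathcal{N}$ with a uniform $C^1$ bound. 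The quantity $L+\s\Sop[\E^\approx]$ is real-analytic, uniformly bounded in $C^1(\mathcal{N})$ and bounded below away from $0$ on $\Gamma$, so $\log(L+\s\Sop[\E^\approx])$ and then $h^\approx=\Pop_\Omega\log(L+\s\Sop[\E^\approx])-\tfrac14\log\tfrac\pi2$ are controlled, the latter being bounded and harmonic on $\mathcal{N}\cup\Omega$. Finally $(h^\approx)^*$, the harmonic conjugate of $h^\approx$ in the simply connected domain $\Omega$ normalized at $\infty$, extends harmonically across $\Gamma$ together with $h^\approx$, and its $C^1(\mathcal{N})$-norm is controlled either via the Cauchy integral for the holomorphic function $h^\approx+\imag(h^\approx)^*$ or via boundedness of the conjugation operator on real-analytic functions on $\Gamma$.

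The hard part will be the controlled iteration in the second paragraph: one must quantify precisely how the factor $|\s|$ gained from the prefactor of $\Sop$ in $\Tope$ competes with the derivative losses incurred at each of the $\asymp|\s|^{-1/2}$ iterations, so that the iterates do not escape a fixed ball of real-analytic functions on a neighborhood of $\Gamma$ that does not shrink with $\s$; this is the content of the estimates of \S\ref{s:proof}, and is what also underlies Lemma~\ref{lem:approx-fix-pt-prel}. By comparison, the regularity statements and the concluding $C^1$-bounds on $H^\approx$, $G^\approx$, $h^\approx$ and $(h^\approx)^*$ are soft consequences of Weierstrass division, the mapping properties of the Poisson and conjugation operators on scales of real-analytic functions, and the fact that $\exp$ and $\log$ act boundedly on functions that are bounded and bounded away from $0$.
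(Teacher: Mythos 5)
Your proposal follows the paper's own route: introduce a decreasing Banach scale of quantitatively real-analytic functions near $\Gamma$, use a Lipschitz estimate for $\Tope$ that loses width at each step, distribute the width budget over the $k_\s$ iterations, and then propagate the resulting uniform bound on $\E^\approx$ through the formulas \eqref{eq:def-H-H-approx}--\eqref{eq:def-h-approx} via Weierstrass division, the Poisson operator, and Cauchy estimates for the $C^1$ control. This is exactly the structure of \S\ref{s:est-Top}, and the lemma is proved in the paper as Proposition~\ref{prop:bdd-coeff} by the same propagation argument, relying on the uniform bound \eqref{eq:Ek-bd}.

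One heuristic in your second paragraph is off in a way worth correcting. You claim the per-step Lipschitz factor is $\asymp|\s|\,k_\s\asymp|\s|^{1/2}$, deducing smallness from the $|\s|$-prefactor on $\Sop$. But the Weierstrass divisions by $\hatR$ (two of them: one inside $\Sop$ through $(\Pop_\Omega-\Iop)/\hatR$, one at the end of $\Tope$) and the second-order $\vDelta$-term each cost further inverse powers of $\sigma-\sigma'$; the paper's Corollary~\ref{cor:bd-T-Lip} gives a per-step factor $\asymp |\s|(\sigma-\sigma')^{-2}+|\s|^2(\sigma-\sigma')^{-4}\asymp|\s|k^2+|\s|^2k^4$, which at $k\asymp|\s|^{-1/2}$ is $\asymp1$, \emph{not} $\asymp|\s|^{1/2}$. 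The contraction factor $<1$ is achieved by taking the constant $\varepsilon$ (equivalently $C_2$) small, and $k_\s\asymp|\s|^{-1/2}$ is exactly the critical threshold beyond which the iterates would escape; your accounting would wrongly suggest one could iterate up to $k\asymp|\s|^{-1}$. You do flag this as the hard part and defer to \S\ref{s:proof}, so it is a heuristic rather than a structural error, but the exponent on $k$ is where the tuning of $k_\s$ is actually decided.
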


In order to obtain an asymptotic expansions for the orthogonal polynomial in the end
(rather than a formula in terms of iterates of $\Tope$),
we need to know that $h^\approx$ is well approximated by its $k$-th Taylor polynomial in
the variable $\s$ with $k=k_\s$, 
which we denote by $\mathfrak{h}^\approx=\mathfrak{h}^\approx_k$.

\begin{lem}
\label{lem:truncation-h-prel}
With $\mathcal{N}$ and $\varrho$ as in Lemma~\ref{lem:approx-fix-pt-prel} and $k=k_\s$,
there exist positive constants $C_1$ and $c_1$ such that 
\[
\sup_{z\in\mathcal{N}\cup\Omega}\big|\mathfrak h^\approx-h^\approx\big|
\le C_1\exp(-c_1|\s|^{-\frac12})
\]
for $\s\in\D(0,\varrho)$.
\end{lem}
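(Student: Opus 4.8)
The plan is to leverage the two preceding lemmas and analyticity of the iterates $\Tope^{j}[0]$ in the parameter $\s$ on the disk $\D(0,\varrho)$. The key observation is that $h^\approx=h^\approx_\s$, as constructed from $\E^\approx=\Tope^{k_\s+1}[0]$ via \eqref{eq:def-h-approx}, depends on $\s$ both through the explicit factors of $\s$ in $L$, $\Sop$, and in the formula \eqref{eq:def-h-approx} \emph{and} through the number of iterates $k_\s$. To disentangle these, I would first fix an auxiliary parameter $\s_0\in\D(0,\varrho)$ and consider the function $\s\mapsto \widetilde h_{\s_0}(\s)\eqd \Pop_\Omega\log\big(L_\s+\s\,\Sop_\s[\Tope_\s^{k_{\s_0}+1}[0]]\big)-\tfrac14\log\tfrac\pi2$, i.e. the same recipe but with the \emph{number of iterates frozen} at $k_{\s_0}$. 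By Lemma~\ref{lem:bdd-coeff-prel}, the relevant quantities are uniformly bounded and real-analytic on $\mathcal N$, and $L_\s+\s\Sop_\s[\cdot]$ stays bounded away from $0$ on $\Gamma$ for $\s\in\D(0,\varrho)$ (shrinking $\varrho$ if needed), so $\widetilde h_{\s_0}$ is a holomorphic function of $\s$ on $\D(0,\varrho)$ with values in a fixed bounded subset of $C^1(\mathcal N\cup\Omega)$; here one uses that $\Pop_\Omega$ maps bounded real-analytic data on $\Gamma$ to bounded harmonic functions on $\Omega$ together with its harmonic conjugate, again by Lemma~\ref{lem:bdd-coeff-prel}.

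Next I would apply a Cauchy-estimate argument to this holomorphic function. Since $\widetilde h_{\s_0}$ is holomorphic on $\D(0,\varrho)$ with a uniform bound $M$ on $\D(0,\varrho)$ (in the $C^1$-norm over $\mathcal N\cup\Omega$, uniformly in $\s_0$), its Taylor coefficients in $\s$ satisfy $\|a_j\|_{C^1}\le M\varrho^{-j}$, and hence the tail of its Taylor series past degree $k$ is bounded by
\[
\sup_{|\s|\le r}\Big\|\widetilde h_{\s_0}(\s)-\sum_{j=0}^{k}a_j\s^j\Big\|_{C^1(\mathcal N\cup\Omega)}
\le M\sum_{j>k}(r/\varrho)^j = \frac{M}{1-r/\varrho}\Big(\frac{r}{\varrho}\Big)^{k+1}
\]
for $r<\varrho$. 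Now the point is that the Taylor coefficients $a_j$ of $\widetilde h_{\s_0}$ for $j\le k_{\s_0}+1$ coincide with the genuine coefficient functions $\coeffh_j$: by the same mechanism already recorded in \S\ref{s:approx-coeff} (the statement that $\coeffh_j^\approx=\coeffh_j$ for $j\le k+1$ when $\E^\approx$ has correct coefficients up to order $k$, which in turn rests on the stabilization property \eqref{eq:Ck-pre}), the iterate $\Tope_\s^{k_{\s_0}+1}[0]$ has Taylor coefficients agreeing with those of the formal solution $\E$ up to order $k_{\s_0}$, and feeding this through \eqref{eq:def-h-approx} fixes the first $k_{\s_0}+2$ Taylor coefficients of $\widetilde h_{\s_0}$. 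Therefore the degree-$k_{\s_0}$ Taylor polynomial of $\widetilde h_{\s_0}$ is exactly $\mathfrak h^\approx_{k_{\s_0}}$, the object in the statement.

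Finally I would evaluate at $\s=\s_0$ itself. By construction $\widetilde h_{\s_0}(\s_0)=h^\approx$ (freezing the iteration count at $k_{\s_0}$ and then setting $\s=\s_0$ reproduces exactly the definition of $h^\approx$ at parameter $\s_0$), so the displayed tail estimate with $k=k_{\s_0}$ and $r=|\s_0|$ gives, writing $\s$ for $\s_0$,
\[
\sup_{z\in\mathcal N\cup\Omega}\big|\mathfrak h^\approx(z)-h^\approx(z)\big|
\le \frac{M}{1-|\s|/\varrho}\Big(\frac{|\s|}{\varrho}\Big)^{k_\s+1}.
\]
Since $k_\s$ is the integer part of $\varepsilon|\s|^{-1/2}$, we have $(|\s|/\varrho)^{k_\s+1}=\exp\big((k_\s+1)\log(|\s|/\varrho)\big)\le \exp\big(-c_1|\s|^{-1/2}\big)$ for a suitable $c_1>0$ once $|\s|$ is small enough (note $\log(\varrho/|\s|)\to+\infty$, so the exponent is $\le -\tfrac{\varepsilon}{2}|\s|^{-1/2}\log(\varrho/|\s|)\le -c_1|\s|^{-1/2}$, and for $|\s|$ bounded below on the rest of $\D(0,\varrho)$ one absorbs the bound into the constant $C_1$). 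This yields the claimed bound with appropriate $C_1,c_1$, after possibly shrinking $\varrho$.

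The main obstacle I anticipate is not the Cauchy estimate itself but justifying the holomorphy and, above all, the \emph{uniform in $\s_0$} $C^1$-bound $M$ on $\widetilde h_{\s_0}$ over the full disk $\D(0,\varrho)$: one must check that $\Tope_\s^{k_{\s_0}+1}[0]$ stays in a fixed bounded region of the relevant function space for all $\s\in\D(0,\varrho)$ and all $\s_0$, which is really where the careful choice $k_\s\asymp|\s|^{-1/2}$ and the contraction/growth estimates underlying Lemmas~\ref{lem:approx-fix-pt-prel} and \ref{lem:bdd-coeff-prel} do their work — the iterates of $\Tope$ "can grow quite wildly", so the bound $M$ must be extracted from the same quantitative analysis that proves those lemmas rather than taken for granted. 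A secondary technical point is to confirm that the map $\E\mapsto \Pop_\Omega\log(L+\s\Sop[\E])$ genuinely depends holomorphically on $\s$ as a Banach-space-valued map, which requires that $L+\s\Sop[\E^\approx]$ stay in the domain of $\log$ (bounded away from $0$ on $\Gamma$), uniformly — again available from Lemma~\ref{lem:bdd-coeff-prel}.
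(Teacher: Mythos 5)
Your overall strategy---viewing $h^\approx$ as arising from the $(k_\s+1)$-fold iterate of the $\s$-dependent operator $\Tope_\s$, freezing the iteration count, exploiting analyticity in $\s$, and applying a Cauchy/Taylor-tail estimate---is essentially the one the paper uses (Lemma~\ref{lem:Taylors-formula} followed by Lemma~\ref{lem:truncation}). But there is a genuine gap, and it is the very point you flag as the ``main obstacle'' and then set aside as a bookkeeping technicality: the claim that $\widetilde h_{\s_0}$ is holomorphic in $\s$ with a uniform bound $M$ on the \emph{fixed} disk $\D(0,\varrho)$, uniformly in $\s_0$, is false. The quantitative control on the iterates supplied by Theorem~\ref{thm:main-iteration} and the attendant estimates (Corollary~\ref{cor:bd-T-Lip}, \eqref{eq:Ek-bd}, \eqref{eq:Ek-bd-app}) requires $k\le |\s|^{-\frac12}C_2^{-\frac12}$, equivalently $|\s|\le C_2^{-1}k^{-2}$. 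So once the iteration count is frozen at $k_{\s_0}$, the iterate $\Tope_\s^{k_{\s_0}+1}[0]$ is only known to remain in the ball of radius $r_0$ in $\mathscr{H}^\infty_{\frac12\sigma^*}(\Gamma)$ for $\s$ in the $k$-dependent \emph{shrinking} disk $\D(0,C_2^{-1}k_{\s_0}^{-2})$, whose radius is comparable to $|\s_0|$ itself, not to the fixed $\varrho$. The iterates really do ``grow quite wildly'' outside this disk, as the Lipschitz constant $M_1(\sigma,\sigma',\s)$ in Corollary~\ref{cor:bd-T-Lip} blows up as the number of annuli used per iterate increases.

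This is not a cosmetic issue: it changes the numerology. Your argument would produce the factor $(|\s|/\varrho)^{k_\s+1}$ with $\varrho$ fixed, which would give an even stronger bound than claimed---a hint that the hypothesis is too strong. The paper's Lemma~\ref{lem:truncation} instead works on the shrinking disk: rescaling Lemma~\ref{lem:Taylors-formula} to $\D(0,C_2^{-1}k^{-2})$ gives the factor $\big(C_2|\s|k^2\big)^{k'+1}$, and this is geometrically small precisely because the choice $k=k_\s\asymp|\s|^{-\frac12}$ makes $C_2|\s|k^2$ a constant strictly less than $1$. The interplay between the shrinking radius of holomorphy and the choice of $k_\s$ is exactly what produces the $\exp(-c_1|\s|^{-\frac12})$ rate. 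Once you replace $\D(0,\varrho)$ by $\D(0,C_2^{-1}k_{\s_0}^{-2})$ and evaluate at $\s=\s_0$ with $|\s_0|\le \frac12 C_2^{-1}k_{\s_0}^{-2}$, the rest of your outline goes through and yields the paper's bound.

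A minor remark: the digression about the Taylor coefficients $a_j$ of $\widetilde h_{\s_0}$ coinciding with the ``genuine'' $\coeffh_j$ is correct but not needed for the Lemma as stated. Here $\mathfrak h^\approx$ is \emph{defined} to be the $k$-th Taylor polynomial of $h^\approx$ in $\s$, so the statement is a pure remainder estimate for the frozen-iterate family, and the stabilization property \eqref{eq:Ck-pre} plays no role.
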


For the reader's convenience, we defer the rather technical proofs of these 
three lemmas to \S\ref{s:est-Top}-\ref{s:truncation} below. 
In fact, we will obtain stronger versions which control
the relevant quantities in a scale of Banach spaces which 
implies uniform pointwise and $C^1$-control
on $\mathcal{N}$; see Theorem~\ref{thm:main-iteration}, Proposition~\ref{prop:bdd-coeff} and 
Lemma~\ref{lem:truncation}, respectively. In the upcoming section, we proceed 
to investigate the consequence of these results for 
the potential equation and for the asymptotics of the orthogonal
polynomials.
\section{Deriving the asymptotic expansion}
\label{s:proof}

\subsection{Cut-off functions}
We introduce a cut-off function $\chi_1$ 
with the following properties. The function $\chi_1$ is $C^\infty$--smooth
and takes values in $[0,1]$. It
vanishes on a neighborhood of $\C\setminus \mathcal{N}$ while
it equals one on a smaller ring domain
$\mathcal{N}'$ containing $\Gamma$. Without loss of generality we assume that
$\mathcal{N}=\phi^{-1}(\A)$ and $\mathcal{N}'=\phi^{-1}(\A')$, where $\A$ and $\A'$
are two centered open annuli containing the unit circle with $\overline{\A'}\subset\A$.
A way to construct $\chi_1$ is to begin with a $C^\infty$--smooth radial 
cut-off function $\chi_1'$ with values
in $[0,1]$ which
equals 1 on the annulus $\mathbb{A}'$ and vanishes
off the larger annulus $\mathbb{A}$,  
and put $\chi_1\eqd \chi_1'\circ\phi$.
Later on, we will also need a $C^\infty$--smooth cut-off function $\chi_2:\C\to[0,1]$ 
which vanishes on $\C\setminus(\Omega\cup \mathcal{N})$
and equals $1$ on $\Omega\cup\mathcal{N}'$. We construct this function in a similar fashion
based on a radial cut-off function.
In addition, we require that $\chi_2=\chi_1$ holds on
the set $\mathcal{N}'\cup (\C\setminus\Omega)$, so that the support of the 
difference $\chi_2-\chi_1$ is contained in $\Omega\setminus\mathcal{N}'$.

\subsection{The approximate potential equation}
\label{s:approx-pot}
We now return to the original problem, and specify $\s$ to $\s=\n^{-1}$,
where $\n$ is the large semiclassical weight parameter.
By a slight abuse of notation, we will write $k_\n$ for $k_\s$. That is,
$k_\n$ is the integer part of $\varepsilon\sqrt{\n}$ for some fixed $\varepsilon>0$.
We let
\begin{equation}
\label{eq:Pot-u-approx-def}
\Potu^\approx=\chi_1\Big(H^\approx\ErfN + \frac{G^\approx}{\sqrt{2\n\pi}}\ExpN\Big)
+(\chi_2-\chi_1)H^\approx,
\end{equation}
where $\E^\approx=\Tope^{k_\n+1}[0]$ and where the auxiliary coefficient
functions $H^\approx$ and $G^\approx$ are given by \eqref{eq:def-H-H-approx}.
We also recall the harmonic function $h^\approx$ given by \eqref{eq:def-h-approx}.
The following is an immediate consequence of Lemma~\ref{lem:approx-fix-pt-prel} 
and these definitions.

\begin{prop}
\label{prop:approx-Pot-eq}
We have 
\[
\vDelta\, \Potu^{\approx}=\chi_1\sqrt{\n}\e^{2h^\approx-2\n\hatR^2}
\Big(1+\Ordo\big(\e^{-c_0\sqrt{\n}}\big)\Big) 
+ \Ordo\big(1_{\mathcal{N}\setminus\mathcal{N}'}\ExpN\big)
\]
as $\n\to\infty$, where the implicit 
constants are uniform throughout the plane, and where $c_0>0$
is the positive constant from Lemma~\ref{lem:approx-fix-pt-prel}.
\end{prop}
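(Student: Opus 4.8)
The plan is to differentiate the explicit formula \eqref{eq:Pot-u-approx-def} for $\Potu^\approx$ piece by piece, using the computation of $\vDelta$ applied to the ansatz already carried out in \eqref{eq:Lap-U-star}, and then to feed in the approximate fixed-point identity from Lemma~\ref{lem:approx-fix-pt-prel}. First I would split the computation according to the three terms appearing in \eqref{eq:Pot-u-approx-def}. On the ring domain $\mathcal N'$ where $\chi_1\equiv1$ and $\chi_2\equiv1$, the last term $(\chi_2-\chi_1)H^\approx$ drops out and $\Potu^\approx$ coincides with the bare ansatz $H^\approx\ErfN+\frac{G^\approx}{\sqrt{2\n\pi}}\ExpN$; applying $\vDelta=\partial\bar\partial$ there gives exactly the expression \eqref{eq:Lap-U-star} with $(E,H,G)$ replaced by $(\s\E^\approx,H^\approx,G^\approx)$, which after regrouping is $\n^{\frac12}\bigl(-4\n\hatR|\partial\hatR|^2\E^\approx+\s\Sop[\E^\approx]+L\bigr)\ExpN\cdot(2/\pi)^{-\frac12}$ up to the harmless reshuffling done in \S\ref{s:comput}. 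By the definition \eqref{eq:def-h-approx} of $h^\approx$ and the definition \eqref{eq:Top-def-prel} of $\Tope$, the bracket equals $(\pi/2)^{\frac12}\e^{2h^\approx}$ plus $4\hatR|\partial\hatR|^2\bigl(\Tope[\E^\approx]-\E^\approx\bigr)$, and Lemma~\ref{lem:approx-fix-pt-prel} bounds the latter by $C_0\e^{-c_0|\s|^{-1/2}}$ uniformly on $\mathcal N$. Converting $\ExpN$ to $\e^{-2\n\hatR^2}$ and absorbing the bounded factors $\hatR|\partial\hatR|^2$ and $\e^{-2h^\approx}$ (bounded by Lemma~\ref{lem:bdd-coeff-prel}) then yields the main term $\chi_1\sqrt\n\,\e^{2h^\approx-2\n\hatR^2}(1+\Ordo(\e^{-c_0\sqrt\n}))$ on $\mathcal N'$.

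Next I would handle the transition region $\mathcal N\setminus\mathcal N'$, where $\chi_1$ and $\chi_2$ are no longer locally constant. Here $\vDelta(\chi_1 u)=\chi_1\vDelta u+2\Re\{\partial\chi_1\,\bar\partial u\}+u\,\vDelta\chi_1$ for $u=H^\approx\ErfN+\frac{G^\approx}{\sqrt{2\n\pi}}\ExpN$, and similarly for $(\chi_2-\chi_1)H^\approx$. The terms where at least one derivative falls on $\chi_1$ or $\chi_2-\chi_1$ are supported in $\mathcal N\setminus\mathcal N'$, a region bounded away from $\Gamma$, where $\hatR^2$ is bounded below by a positive constant; hence every occurrence of $\ErfN$ or $\ExpN$ (and their derivatives, which only bring polynomial-in-$\sqrt\n$ factors and further powers of $\bar\partial\hatR$) is $\Ordo(\ExpN)$ there after crude estimation, with the $\ErfN$ contributions being even smaller since $\Erf(2\sqrt\n\hatR)=1+\Ordo(\ExpN)$ for $\hatR$ bounded away from zero on the outer side. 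Finally, the purely harmonic piece: on $\Omega\cup\mathcal N'$ one checks that $\chi_2 H^\approx$ contributes $\vDelta(\chi_2 H^\approx)$ which, since $H^\approx$ is harmonic and $\chi_2$ is locally constant on $\Omega\cup\mathcal N'$, vanishes except on $\Omega\setminus\mathcal N'$ — but there $\chi_2=\chi_1$ by construction of the cut-offs, so the difference $\chi_2-\chi_1$ is supported off $\Gamma$ and its contribution is again absorbed into the error, which on that set is actually identically zero in the displayed formula because the stated error terms are only required to capture the $\mathcal N\setminus\mathcal N'$ piece. Collecting the three contributions gives precisely the claimed identity.

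The main obstacle I anticipate is bookkeeping rather than conceptual: one must verify that the regrouping of \eqref{eq:lap-ansatz} into \eqref{eq:Lap-U-star} — in particular the product-rule identity $-2\hatR\partial\hatR\bar\partial G=-2\partial\hatR\bar\partial(G\hatR)+2|\partial\hatR|^2 G$ and the ensuing cancellation — goes through verbatim with the \emph{approximate} coefficient functions $(\s\E^\approx,H^\approx,G^\approx)$ in place of the exact $(E,H,G)$, which it does since $G^\approx=(H^\approx-\s\E^\approx)/(2\hatR)$ holds as an exact pointwise identity by \eqref{eq:def-H-H-approx}, not merely asymptotically. A secondary subtlety is tracking powers of $\sqrt\n$ in the cross-terms $\partial\chi_1\,\bar\partial(\cdots)$: differentiating $\ExpN$ produces a factor $\sqrt\n\,\bar\partial\hatR$, so one naively loses $\sqrt\n$, but this is defeated by the exponential smallness $\e^{-2\n\hatR^2}\le\e^{-2\n\delta^2}$ on $\mathcal N\setminus\mathcal N'$, so that $\sqrt\n\,\ExpN=\Ordo(\ExpN)$ after shrinking the constant $\delta$ slightly; hence the stated error $\Ordo(1_{\mathcal N\setminus\mathcal N'}\ExpN)$ is legitimate with the understanding that $\ExpN$ on the right denotes a representative of that exponentially small order, uniform in the plane. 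No sharper control is needed because the eventual $\dbar$-surgery in \S\ref{s:proof}--\ref{s:truncation} only requires $L^2$-smallness of this remainder against the weight.
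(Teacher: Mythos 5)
The overall skeleton of your argument matches the paper's: split $\vDelta\Potu^\approx$ into $\chi_1\vDelta\Potu^\approx_*$ plus cutoff cross-terms, identify the bracket in $\vDelta\Potu^\approx_*$ with $-4\hatR|\partial\hatR|^2(\E^\approx-\Tope[\E^\approx])$ via the master equation and \eqref{eq:def-h-approx}, apply Lemma~\ref{lem:approx-fix-pt-prel}, and argue that the cross-terms are exponentially suppressed because they live in $\mathcal{N}\setminus\mathcal{N}'$. The observation that $G^\approx=(H^\approx-\s\E^\approx)/(2\hatR)$ is an exact identity is also the right thing to emphasize.

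However, your treatment of the transition region has a genuine gap, concentrated on the outer part $(\mathcal{N}\setminus\mathcal{N}')\cap\Omega$. The claim that ``every occurrence of $\ErfN$ or $\ExpN\ \ldots$ is $\Ordo(\ExpN)$ there after crude estimation'' is false on the outer side: there $\hatR>0$ is bounded away from zero, so $\ErfN=1-\Ordo(\ExpN)$ by \eqref{eq:erf-est2}, i.e.\ $\ErfN$ is $\Ordo(1)$, not exponentially small. Consequently the terms $2\Re\{\bar\partial\chi_1\,\partial(H^\approx\ErfN)\}+H^\approx\ErfN\,\vDelta\chi_1$ are $\Ordo(1)$, not $\Ordo(\ExpN)$, and your crude estimate does not establish the claimed bound. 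Your attempt to repair this in the ``purely harmonic piece'' paragraph then contains a second error: you assert ``but there $\chi_2=\chi_1$ by construction'' on $\Omega\setminus\mathcal{N}'$, yet by construction the support of $\chi_2-\chi_1$ is contained \emph{precisely} in $\Omega\setminus\mathcal{N}'$, so $\chi_2\neq\chi_1$ there in general. The missing mechanism is the one that motivates the $\chi_2$-term in \eqref{eq:Pot-u-approx-def}: since $\chi_2\equiv1$ on $\Omega\cup\mathcal{N}'$ one has $\nabla\chi_2\equiv0$ on $\Omega$, so on $(\mathcal{N}\setminus\mathcal{N}')\cap\Omega$ the full cutoff remainder collapses to $2\Re\{\bar\partial\chi_1\,\partial(\Potu^\approx_*-H^\approx)\}+\vDelta\chi_1\,(\Potu^\approx_*-H^\approx)$, and only the difference $\Potu^\approx_*-H^\approx=H^\approx(\ErfN-1)+\tfrac{1}{\sqrt{2\pi\n}}G^\approx\ExpN$ appears, which \emph{is} $\Ordo(\ExpN)$ by \eqref{eq:erf-est2} and Lemma~\ref{lem:bdd-coeff-prel}. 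On the inner side $(\mathcal{N}\setminus\mathcal{N}')\setminus\Omega$ your reasoning does work as stated (there $\hatR<0$ so $\ErfN$ really is $\Ordo(\ExpN)$ by \eqref{eq:erf-est1}, and $\chi_1=\chi_2$). So the proof needs the inner/outer case split, with the $\nabla\chi_2\equiv0$ cancellation carrying the outer case; without it the argument does not close.
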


\begin{proof}
We introduce the notation $\Potu^{\approx}_*$ for the approximate potential
\[
\Potu^{\approx}_*=H^{\approx}\ErfN + \frac{G^{\approx}}{\sqrt{2\n\pi}}\ExpN,
\]
which may possibly be defined only on 
the neighborhood $\mathcal{N}$ of $\Gamma$.
Then the potential $\Potu^{\approx}$ satisfies $\Potu^{\approx}=\chi_1\Potu^{\approx}_* 
+ (\chi_2-\chi_1)H^{\approx}$, 
and we may calculate the Laplacian
of $\Potu^{\approx}$ as 
\begin{multline}
\vDelta\,\Potu^{\approx}=\chi_1\vDelta\,\Potu^{\approx}_*
+ 2\Re\big(\bar\partial \chi_1\;\partial\,\Potu^{\approx}_* 
+ \bar\partial(\chi_2-\chi_1)\;\partial H^{\approx}\big)
\\+\vDelta\,\chi_1\;\Potu^{\approx}_* + \vDelta (\chi_2-\chi_1)\; H^{\approx}
=:\chi_1\vDelta\,\Potu^{\approx}_* + W,
\end{multline}
where we treat $W=W_\n$ as part of the error term.

We first claim that the term $W$ is of order 
$\Ordo\big(1_{\mathcal{N}\setminus\mathcal{N}'}\ExpN\big)$,
so that it fits within the given error parameters.
Indeed, it vanishes outside $\mathcal{N}\setminus\mathcal{N}'$, and on
the set $\mathcal{N}\setminus(\mathcal{N}'\cup \Omega)$, 
we use the fact that $\chi_1=\chi_2$ there, so that
\[
W=2\Re\big(\bar\partial \chi_1\,\partial\,
\Potu^{\approx}_*\big)+\vDelta\,\chi_1\,\Potu^{\approx}_*\qquad
\text{on}\quad \mathcal{N}\setminus(\mathcal{N}'\cup \Omega).
\]
We observe that $\Potu^{\approx}_*$
and $\partial\,\Potu^{\approx}_*$ are both of order 
$\Ordo(\ExpN)$ on $\mathcal{N}\setminus\mathcal{N}'$.
This holds in view of the decay estimate \eqref{eq:erf-est1} 
for the error function together with Lemma~\ref{lem:bdd-coeff-prel}, 
which asserts that the coefficient functions $H^{\approx}$ and $G^{\approx}$ 
together with their gradients are uniformly bounded on $\mathcal{N}$. 
As a consequence, we obtain
\[
W=\Ordo\big(1_{\mathcal{N}\setminus\mathcal{N}'}\ExpN\big)
\qquad\text{on}\quad \C\setminus\Omega.
\]
On the set $(\mathcal{N}\setminus\mathcal{N}') \cap \Omega$,
we instead have $\nabla \chi_2\equiv0$, and hence
\[
W=2\Re\big\{\bar\partial \chi_1\,\partial(\Potu^{\approx}_*-H^\approx)\big\}
+\vDelta\,\chi_1\,(\Potu^{\approx}_*-H^{\approx})\qquad\text{on}\quad\Omega.
\]
But in view of the estimate \eqref{eq:erf-est2} and the fact that
\[
\Potu^{\approx}_*-H^{\approx}=H^{\approx}(\ErfN-1) + \frac{1}{\sqrt{2\pi\n}}G^{\approx}\ExpN,
\]
we find that both $\Potu^{\approx}_*-H^{\approx}$ and $\nabla (\Potu^{\approx}_*-H^\approx)$ are of order
$\Ordo(\ExpN)$ on the open set $(\mathcal{N}\setminus\mathcal{N}')\cap \Omega$, so it follows that
\[
W=\Ordo(1_{\mathcal{N}\setminus\mathcal{N}'}\ExpN)
\]
globally. 

It only remains to show that
\begin{equation}
\label{eq:main-term-pot-eq-truncated}
\chi_1\vDelta\,\Potu^{\approx}_*=\chi_1 \n^{\frac12}\,\e^{2h^\approx-2\n\hatR^2}
\Big(1+\Ordo\big(\e^{-c_0\sqrt{\n}}\big)\Big)
\end{equation}
where the implicit constant is supposed to be uniform on $\mathcal{N}$.
To see this, we should relate \eqref{eq:main-term-pot-eq-truncated} to the nonlinear operator $\Tope$
studied above. In view of the equations \eqref{eq:Lap-U-star},
\eqref{eq:lap-eq-final}, and \eqref{eq:master-full-mod}, with $H$ and $G$ replaced by $H^{\approx}$ and $G^{\approx}$,
respectively, we have that
\[
\vDelta \,\Potu^{\approx}_* - \n^{\frac12}\e^{2h^\approx-2\n\hatR^2}
=\Big(\frac{\pi}{2}\Big)^{-\frac12}\n^{\frac12}A\,\e^{-2\n \hatR^2},
\]
where
\begin{multline}
A=-4\hatR|\partial\hatR|^2 \,\E^\approx
+\tfrac{2}{\n}\Re\big\{\partial\hatR \bar\partial \E^\approx\big\} 
+\tfrac{1}{\n}\E^\approx
\vDelta \hatR +2|\partial\hatR|^2 G^\approx
\\+\tfrac{1}{2\n}\vDelta G^\approx-
\Big(\frac{\pi}{2}\Big)^{\frac12}\e^{2h^\approx}.
\end{multline}
Following the calculations in \S\ref{s:sol-master}, in particular the equations
\eqref{eq:step-reduct-T}, \eqref{eq:Sop-def}, \eqref{eq:reduct-Top2}, \eqref{eq:h-sol} 
and \eqref{eq:master-simple-W} (cf. also the definition of $\Tope$ in \eqref{eq:Top-def-prel}), 
we find that $A$
may be expressed as
\[
A=-4\hatR|\partial\hatR|^2\Big(\E^\approx-\Tope[\E^\approx]\Big).
\]
As a consequence, we obtain
\begin{multline}
\vDelta\, \Potu^\approx_*-\n^{\frac12}\e^{2h^\approx-2\n\hatR^2}=
-4\Big(\frac{\pi}{2}\Big)^{-\frac12}\n^{\frac12}\hatR|\partial\hatR|^2\big(\E^\approx-\Tope[\E^\approx]\big)\ExpN
\\=\Ordo\big(\n^{\frac12}\e^{-c_0\sqrt{\n}}\ExpN\big),
\end{multline}
where the last step follows from Lemma~\ref{lem:approx-fix-pt-prel}.
But this bound is equivalent to the equation \eqref{eq:main-term-pot-eq-truncated},
and hence the proof is complete.
\end{proof}

\subsection{Asymptotics of orthogonal polynomials}
In \S\ref{s:approx-pot}, we proved that high iterates $\calE^\approx=\Tope^{k+1}[0]$
supply approximate solutions to the fixed-point equation $\Tope\calE\approx \calE$. If we form the
functions $H^\approx$ and $G^\approx$ as in \S\ref{s:approx-coeff}
and the potential $\Potu$ according to the ansatz \eqref{eq:Ansatz-full}, then $\vDelta \Potu$
should approximate to the orthogonal polynomial wave function $|P|^2\e^{-2\n Q}$.
To actually prove that it is correct, we need
go back to $P=P_{\n,n}$ itself, which amounts to a kind of polarization.
As such, the result is essentially equivalent
to the main result of \cite{HedenmalmPotential}, but with a different
proof and a new formula for $P_{\n,n}$. Another difference is that
we do not work with truncated asymptotic expansions, but rather the
full iterates $\calE^\approx=\Tope^{k+1}[0]$. These iterates admit asymptotic
expansions in negative powers of $\n$, but 
will in general contain terms of all orders.
In view of Lemma~\ref{lem:truncation-h-prel} (see also \S\ref{s:truncation} below), 
it is also possible to work with finite asymptotic 
expansions with an $\n$-dependent number of terms.

We let $h^\approx$ be as above, 
and denote by $(h^\approx)^*$ the harmonic conjugate of
$h^\approx$ in the domain $\Omega$, normalized to vanish at infinity. 
We recall that $\tau$ is a fixed parameter with $0<\tau\le 1$ 
such that the assumptions on $\calS=\calS_\tau$
in \S\ref{s:assumptions} hold. 

\begin{thm}
\label{thm:main}
Let $c_0>0$ be the positive constant from 
Lemma~\ref{lem:approx-fix-pt-prel}. Then, we have
\[
\big\lVert P_{\n,n}-\chi_2\n^{\frac14}\phi^n\,
\e^{h^\approx+\imag (h^\approx)^*+\n \mathcal{Q}} \big\rVert_{2\n Q}
=\Ordo\big(\e^{-c_0\sqrt{\n}}\big),
\]
as $\n,n\to+\infty$ with $n=\tau\n$.
\end{thm}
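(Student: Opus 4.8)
The plan is to prove Theorem~\ref{thm:main} by the standard $\dbar$-surgery argument: we construct an explicit candidate approximation $\tilde P:=\chi_2\n^{1/4}\phi^n\e^{h^\approx+\imag(h^\approx)^*+\n\mathcal Q}$, show that it \emph{almost} solves the defining properties of $P_{\n,n}$, and then correct it to the true orthogonal polynomial using H\"ormander $L^2$-estimates for $\dbar$, keeping track that the correction is exponentially small in $\sqrt\n$.

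\medskip

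\textbf{Step 1: The candidate is almost entire.} First I would compute $\dbar\tilde P$. On the ring domain $\mathcal N'$ where $\chi_2\equiv 1$, the factor $\phi^n\e^{h^\approx+\imag(h^\approx)^*+\n\mathcal Q}$ is holomorphic (recall $\mathcal Q$ and $\phi$ are holomorphic on $\Omega_\epsilon$ and $h^\approx+\imag(h^\approx)^*$ is holomorphic there by Lemma~\ref{lem:bdd-coeff-prel}), so $\dbar\tilde P=0$ there. The only contribution to $\dbar\tilde P$ comes from $\dbar\chi_2$, which is supported in a compact subset of $\Omega\setminus\mathcal N'$, i.e.\ \emph{deep inside} the unbounded off-spectral component. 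There, by the construction of $\breve Q_\tau$ and the relation $\tau\log|\phi_\tau|+\Re\mathcal Q_\tau=\breve Q_\tau$ together with $Q-\breve Q_\tau\asymp\mathrm{dist}(\cdot,\Gamma)^2$, the weight $\e^{-2\n Q}|\tilde P|^2$ is pointwise of size $\e^{-2\n(Q-\breve Q_\tau)}\asymp \e^{-c\n}$ uniformly on $\mathrm{supp}\,\dbar\chi_2$, so $\dbar\tilde P=\Ordo(\e^{-c\n})$ in $L^2(\C,\e^{-2\n Q}\diffA)$.

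\medskip

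\textbf{Step 2: The candidate is almost correctly normalized and almost orthogonal.} Here is where Proposition~\ref{prop:approx-Pot-eq} enters. Setting $\Potu^\approx$ as in \eqref{eq:Pot-u-approx-def}, that proposition gives $\vDelta\Potu^\approx=\chi_1\n^{1/2}\e^{2h^\approx-2\n\hatR^2}(1+\Ordo(\e^{-c_0\sqrt\n}))+\Ordo(1_{\mathcal N\setminus\mathcal N'}\ExpN)$. On the other hand $|\tilde P|^2\e^{-2\n Q}=\n^{1/2}|\chi_2|^2\e^{2h^\approx-2\n\hatR^2}$ on $\mathcal N$ by the same potential-theoretic identities as in \S\ref{s:heuristic}, so $\vDelta\Potu^\approx$ and $|\tilde P|^2\e^{-2\n Q}$ agree up to an $\e^{-c_0\sqrt\n}$ relative error plus a harmless Gaussian-suppressed term on $\mathcal N\setminus\mathcal N'$. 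Now I would mimic the computation in \S\ref{s:exact}: write $\partial\Potu^\approx$ and use that, by \eqref{eq:growthH}, $\partial\Potu^\approx(z)=z^{-1}+\Ordo(z^{-2})$ at infinity while $\Potu^\approx$ has a logarithmic pole of the right strength; the quotient $\tilde P^{-1}\partial\Potu^\approx$ is, up to an $\e^{-c_0\sqrt\n}$ error, equal to an anti-holomorphic-type integral against $\overline{\tilde P}\e^{-2\n Q}$ that decays like $\Ordo(z^{-n-1})$. Expanding the Cauchy kernel in a finite geometric series (exactly as in \S1.3 of \cite{HedenmalmPotential}) converts this decay into the approximate orthogonality relations $\langle z^j,\tilde P\rangle_{2\n Q}=\Ordo(\e^{-c_0\sqrt\n})$ for $j=0,\dots,n-1$, and into $\lVert\tilde P\rVert_{2\n Q}^2=1+\Ordo(\e^{-c_0\sqrt\n})$.

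\medskip

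\textbf{Step 3: Surgery and conclusion.} With $\tilde P$ an approximately-normalized, approximately-entire, approximately-orthogonal degree-$n$ function (its leading coefficient is $\n^{1/4}\phi'(\infty)^n\,\e^{(\text{const})}$ by the normalizations of $\phi$ and $(h^\approx)^*$, which one reads off the asymptotic expansion), I would solve $\dbar u = \dbar\tilde P$ with the H\"ormander estimate in the weight $\e^{-2\n Q}$ twisted by a small logarithmic factor to control polynomial growth, obtaining $\lVert u\rVert_{2\n Q}=\Ordo(\e^{-c\n})$ and $u$ a polynomial of degree $\le n$; then $\tilde P-u$ is an honest polynomial of degree $\le n$. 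Projecting onto the orthogonal complement of $\{1,z,\dots,z^{n-1}\}$ in $L^2(\C,\e^{-2\n Q}\diffA)$ and renormalizing produces $P_{\n,n}$ (up to unimodular constant, fixed by the positive-leading-coefficient convention), and the approximate orthogonality and normalization from Step 2 show $\lVert P_{\n,n}-\tilde P\rVert_{2\n Q}=\Ordo(\e^{-c_0\sqrt\n})$, the $\sqrt\n$-rate being the bottleneck inherited from Lemma~\ref{lem:approx-fix-pt-prel}. The main obstacle is Step 2: carefully propagating the \emph{pointwise} potential estimate of Proposition~\ref{prop:approx-Pot-eq} through the division by $\tilde P$ and the geometric-series/Liouville argument to get genuine $L^2$ control on the orthogonality defects, while ensuring that the exponentially small error in the potential equation is not amplified by the (potentially large, but on $\mathcal N$ bounded-below by Lemma~\ref{lem:bdd-coeff-prel}) factors one divides by; this is precisely the polarization step alluded to in the text, and it is the place where the real-valued Laplacian formulation must be converted back into a statement about $P_{\n,n}$ itself.
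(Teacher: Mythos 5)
Your overall plan is the right one and matches the paper's: construct the candidate $\tilde P=\chi_2 f$ with $f=\n^{\frac14}\phi^n\e^{h^\approx+\imag(h^\approx)^*+\n\mathcal Q}$, apply H\"ormander $\dbar$-surgery to get a genuine polynomial $p$ of degree $n$ with $\lVert p-\tilde P\rVert_{2\n Q}=\Ordo(\e^{-d\n})$, verify that $p$ is approximately orthogonal to $\mathrm{Pol}_{n-1}$ and approximately normalized by feeding Proposition~\ref{prop:approx-Pot-eq} into the computation, and finish with the duality argument from \cite{HW-ONP}. Steps~1 and~3 are essentially correct (a minor slip: it is $\tilde P-u$, not $u$, that becomes the polynomial $p$).

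The gap is in the orthogonality verification of Step~2, and it is a real one, not a matter of detail. You propose to mimic the proof of Proposition~\ref{lem:exact}: form the quotient $\tilde P^{-1}\partial\Potu^\approx$, identify it (up to small error) with an integral solving $\bar\partial\Phi=\overline{\tilde P}\,\e^{-2\n Q}$, extract the $\Ordo(|z|^{-n-1})$ decay, and expand the Cauchy kernel as a finite geometric series to obtain the orthogonality defects. But that argument hinges on the exact divisibility condition $\pi^{-1}\partial\PotV\in C^2(\C)$, which is precisely the structural hypothesis that has been \emph{dropped} in the approximate problem (\S\ref{s:heuristic} replaces it by interior decay), and $\tilde P=\chi_2 f$ is not entire — it vanishes with a $\dbar$-tail where $\chi_2$ transitions. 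So there is no clean global identity $\bar\partial(\tilde P^{-1}\partial\Potu^\approx)=\overline{\tilde P}\,\e^{-2\n Q}$ to expand against, and the "up to an $\e^{-c_0\sqrt\n}$ error" you invoke is exactly the assertion that needs proof. You in fact flag this as "the main obstacle," but do not say how to get around it.

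The paper's proof does get around it, and in a way that sidesteps the divisibility entirely. Rather than dividing $\partial\Potu^\approx$ by the candidate, one writes
\[
\int_\C q\,\overline p\,\e^{-2\n Q}\diffA
=\int_\C\chi_2\,\frac{q}{f}\,|f|^2\e^{-2\n Q}\diffA+\Ordo(\e^{-d\n}\lVert q\rVert_{2\n Q})
=\n^{\frac12}\int_\C\chi_2\,\frac{q}{f}\,\e^{2h^\approx-2\n\hatR^2}\diffA+\ldots,
\]
substitutes $\n^{\frac12}\chi_1\e^{2h^\approx-2\n\hatR^2}=\vDelta\Potu^\approx+\text{(small)}$ from Proposition~\ref{prop:approx-Pot-eq}, and then evaluates
\[
\int_\C\frac{q}{f}\,\vDelta\Potu^\approx\,\diffA
=\lim_{R\to\infty}\frac12\int_{\T(0,R)}\Big(\frac{q}{f}\,\partial_{\mathrm n}\Potu^\approx-\Potu^\approx\,\partial_{\mathrm n}\frac{q}{f}\Big)\diffs=0
\]
by Green's formula. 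This works because $q/f$ is holomorphic (hence $\vDelta(q/f)=0$) on $\Omega\cup\mathcal N$, which is where $\Potu^\approx$ is supported, and because $\partial\Potu^\approx=\Ordo(|z|^{-1})$ while $q/f=\Ordo(|z|^{-1})$ at infinity, so the boundary integral vanishes in the limit. No $\dbar$-equation for the quotient, no Cauchy-kernel expansion, no divisibility — the logarithmic growth of $H^\approx$ and the harmonicity of $q/f$ do all the work. The normalization $\lVert p\rVert_{2\n Q}^2=1+\Ordo(\e^{-c_0\sqrt\n})$ is obtained by the same Green's formula computation with $q/f$ replaced by $1$, reading off $\lim_R\frac12\int_{\T(0,R)}\partial_{\mathrm n}H^\approx\,\diffs=1$ from the normalization $H^\approx=\log|\phi|^2+\Ordo(\n^{-1})$. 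To repair your proposal you would need to replace the Cauchy-kernel/geometric-series step by this Green's-formula pairing of $\vDelta\Potu^\approx$ with a test function holomorphic on the support of $\Potu^\approx$.
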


Theorem~\ref{thm:main} will be shown to follow from Proposition~\ref{prop:approx-Pot-eq}.
Several of the necessary steps have appeared previously in the works 
\cite{HW-ONP, HW-ONP2, HedenmalmPotential}, and for this reason we keep the
presentation brief.

\begin{proof}[Proof]
We let $f$ be the holomorphic function on $\Omega$ defined by
\[
f(z)=\n^{\frac14}\phi^n\e^{h^\approx+\imag (h^\approx)^*+\n\mathcal{Q}}.
\]
Notice that $|f(z)|\asymp |z|^n$ at infinity, and that $f$ extends holomorphically
to $\Omega\cup\mathcal{N}$.
As a first step, we apply a version of H{\"o}rmander's 
$L^2$-estimates for the $\bar\partial$-operator to find a polynomial $p$
of degree $n$
such that
\[
\int_{\C}\big|p-\chi_2 f\big|^2\e^{-2\n Q}\diffA(z)=\Ordo\big(\e^{-2d\n}\big).
\]
where $d>0$, see e.g.\ \S3.6 in \cite{HW-ONP}. It remains to show that $p$
is close to the sought orthogonal polynomial $P_{\n,n}$.

We will use the approximate potential 
equation from Proposition~\ref{prop:approx-Pot-eq}
together with the structure of $\Potu^{\approx}$ and
Green's formula to verify that $p$ is 
appropriately approximately orthogonal to any polynomial $q$ of
degree at most $n-1$. If $q$ is such a polynomial, we observe that
\[
\int_{\C} q\,\overline{p}\,\e^{-2\n Q}\diffA=
\int_{\C} \chi_2\,q\,\overline{f}\,\e^{-2\n Q}\diffA 
+ \Ordo(\e^{-d\n}\lVert q\rVert_{2\n Q}),
\]
so it remains to estimate the first term on the right-hand side.
But we have
\[
\int_{\C} \chi_2 q\,\overline{f}\,\e^{-2\n Q}\diffA 
=\int_{\C} \chi_2\, \frac{q}{f}\,|f|^2\,\e^{-2\n Q}\diffA 
=\n^{\frac12}\int_{\C} \chi_2\,\frac{q}{f}\, \e^{2 h^\approx - 2\n\hatR^2}\diffA.
\]
We rewrite this expression in the form
\[
\int_{\C} \chi_2\,\frac{q}{f}\, \e^{2 h^\approx - 2\n\hatR^2}\diffA
=\int_{\C} \chi_1\,\frac{q}{f}\, \e^{2 h^\approx - 2\n\hatR^2}\diffA
+\n^{-\frac12}\int_{\C} (\chi_2-\chi_1) q\,\overline{f}\,\e^{-2\n Q}\diffA,
\]
where we should stress that the integrand in the last term on the right-hand side 
is non-zero only on $\Omega\setminus\mathcal{N}'$. 
Moreover, since $h^\approx$ is uniformly bounded on $\Omega$, we have the pointwise estimate
\[
|f|\,\e^{-\n Q}\le C\n^{\frac14}\e^{-\n(Q-\breve{Q})} \qquad\text{on}\quad\Omega,
\] 
and a simple
application of the Bernstein-Walsh lemma (see e.g.\ Proposition~2.3 in \cite{HW-ONP})
shows that $|q|\e^{-\n Q}\le C\n^{\frac12}\e^{-\n(Q-\breve{Q})}\lVert q\rVert_{2\n Q}$,
for some positive constant $C=C(Q)$.
Hence, we find that (since $\chi_2-\chi_1\ge 0$ automatically)
\begin{multline}
\bigg|\int_{\C} (\chi_2-\chi_1) q\,\overline{f}\,\e^{-2\n Q}\diffA\bigg|
\le C^2\n^{\frac34} \lVert q\rVert_{2\n Q} \int_{\C} (\chi_2-\chi_1)\e^{-2\n (Q-\breve{Q})}\diffA
\\=\Ordo(\e^{-d'\n}\lVert q\rVert_{2\n Q})
\end{multline}
for some $d'>0$, where the last step follows from the fact that
$Q-\breve{Q}$ is bounded below by a positive constant on $\mathrm{supp}(\chi_2-\chi_1)$
and the known growth of $Q-\breve{Q}$ at infinity.

We proceed to show that
\begin{equation}
\label{eq:final-est}
\n^{\frac12}\int_{\C} \chi_1\,\frac{q}{f}\, \e^{2 h^\approx - 2\n\hatR^2}\diffA
=\Ordo\big(\e^{-c_0\sqrt{\n}}\lVert q\rVert_{2\n Q}\big),
\end{equation}
and it is here that the potential equation enters. Indeed, in view of
Proposition~\ref{prop:approx-Pot-eq}, we have that
\[
\n^{\frac12}\chi_1\e^{2 h^\approx - 2\n\hatR^2}=\vDelta\, \Potu^\approx
+ \Ordo\big(\big(\chi_1\n^{\frac12}\e^{-c_0\sqrt{\n}}+1_{\mathcal{N}\setminus\mathcal{N}'}\big)\ExpN\big),
\]
from which it follows that
\begin{multline}
\label{eq:error-q/f-int1}
\n^{\frac12}\int_{\C} \chi_1\,\frac{q}{f}\, \e^{2 h^\approx - 2\n\hatR^2}\diffA
=\int_{\C} \frac{q}{f}\vDelta\,\Potu^\approx
\diffA 
\\+ \Ordo\bigg(\n^{\frac12}\int_{\C} \chi_1 \frac{|q|}{|f|}\e^{-c_0\sqrt{\n}}\e^{-2\n \hatR^2}\diffA
+\int_{\mathcal{N}\setminus\mathcal{N}'}\frac{|q|}{|f|}\e^{-2\n \hatR^2}\diffA\bigg).
\end{multline}
In addition, in view of the uniform boundedness of $h^\approx$
on the support of $\chi_1$, we find that
\begin{multline}
\label{eq:1/f-bd}
|f|^{-1}\e^{-2\n \hatR^2} = \Ordo\big(|f|^{-1}\e^{2 h^\approx - 2\n\hatR^2}\big)
\\
=\Ordo\big(\n^{-\frac12}|f|^{-1}\,|f|^2\e^{ - 2\n Q}\big)
=\Ordo\big(\n^{-\frac12}|f|\e^{- 2\n Q}\big),
\end{multline}
and hence, an application of the Cauchy-Schwartz inequality
gives that 
\begin{multline}
\int_{\C} \chi_1 \frac{|q|}{|f|}\e^{-c_0\sqrt{\n}}\e^{-2\n \hatR^2}\diffA
=\Ordo\Big(\n^{-\frac12}\e^{-c_0\sqrt{\n}}\int_{\C}\chi_1\,|q|\,|f|\,\e^{-2\n Q}\diffA\Big)
\\= \Ordo\big(\n^{-\frac12}\e^{-c_0\sqrt{\n}}\lVert q\rVert_{2\n Q}\lVert \chi_1 f\rVert_{2\n Q}\big).
\end{multline}
Since $\lVert \chi_1 f\rVert_{2\n Q}$ is clearly bounded (this will be verified below),
it follows that the first Ordo expression on the right-hand side of \eqref{eq:error-q/f-int1}
is of order $\Ordo(\e^{-c_0\sqrt{\n}}\lVert q\rVert_{2\n Q})$.

Turning to the last error term in \eqref{eq:error-q/f-int1}, we observe that
\eqref{eq:1/f-bd} implies that
\[
\int_{\mathcal{N}\setminus\mathcal{N}'} \frac{|q|}{|f|}\e^{-2\n \hatR^2}\diffA
\le C\n^{-\frac12}\int_{\mathcal{N}\setminus\mathcal{N}'} |q|\,|f|\,\e^{-2\n Q}\diffA,
\]
so an application of the Cauchy-Schwartz inequality together with the observation that
\[
\int_{\mathcal{N}\setminus\mathcal{N}'} |f|^2\e^{-2\n Q}
\le C\n^\frac12\int_{\mathcal{N}\setminus\mathcal{N}'}\e^{-2\n \hatR^2}\diffA=\Ordo(\e^{-d''\n})
\]
for some $d''>0$ shows that the second error term in \eqref{eq:error-q/f-int1} is 
of order $\Ordo(\e^{-d''\n}\lVert q\rVert_{2\n Q})$. Hence, it follows that
\[
\n^{\frac12}\int_{\C} \chi_1\,\frac{q}{f}\, \e^{2 h^\approx - 2\n\hatR^2}\diffA=
\int_{\C} \frac{q}{f}\,\vDelta\, \Potu^\approx\,\diffA
+\Ordo\big(\e^{-c_0\sqrt{\n}}\lVert q\rVert_{2\n Q}\big)
\]
for $\n$ sufficiently large.
To complete the proof of \eqref{eq:final-est}, we observe that by Green's formula,
\[
\int_{\C} \frac{q}{f}\,\vDelta\, \Potu^\approx\,\diffA=
\lim_{R\to+\infty}\frac{1}{2}\int_{\T(0,R)}
\Big(\frac{q}{f}\partial_{\rm n}\Potu^\approx
-\Potu^\approx \partial_{\rm n}\frac{q}{f}\Big)\diffs=0,
\]
where $\T(0,R)=\{z:|z|=R\}$, and where the last step 
relies on the facts that the gradient of $\Potu^\approx$
decays like $\Ordo(1/|z|)$ at infinity, and that 
$q/f$ decays like $\Ordo(|z|^{-1})$ at infinity.
If we put the pieces together, it follows that
\begin{equation}
\label{eq:approx-orth}
\int_{\C} q\,\overline{p}\,\e^{-2\n Q}\diffA
=\Ordo\big(\e^{-c_0\sqrt{\n}}\lVert q\rVert_{2\n Q}\big),
\end{equation}
as $\n\to+\infty$.

Up to the point where we applied Green's formula, 
essentially the same computations may be repeated
with $q$ replaced by $p$, which is very close to 
$f$ on $\Omega$. Specifically, we have that
\begin{align}
\label{eq:norm-1}
\int_{\C} |p|^2\,\e^{-2\n Q}\diffA&=
\int_{\C} \chi_1 |f|^2\,\e^{-2\n Q}\diffA + \Ordo\big(\e^{-d\n}\big)\\&=
\int_{\C} \vDelta \,\Potu^\approx \diffA + \Ordo\big(\e^{-c_0\sqrt{\n}}\big)\\
&= \lim_{R\to+\infty}\frac12\int_{\T(0,R)}\partial_{\rm n}\Potu^\approx\diffs+\Ordo(\e^{-c_0\sqrt{\n}})\\
&=\lim_{R\to+\infty}\frac12\int_{\T(0,R)}\partial_{\rm n} H^\approx\diffs+\Ordo(\e^{-c_0\sqrt{\n}})
\\ &=1+\Ordo(\e^{-c_0\sqrt{\n}}),
\end{align}
where the last step relies on the fact that 
$\partial_{\rm n}H^\approx(z)=2|z|^{-1}+\Ordo(|z|^{-2})$ at $\T(0,R)$
for large $R$.

As was explained on pp.~349--350 in \cite{HW-ONP}, 
the bounds \eqref{eq:norm-1} and \eqref{eq:approx-orth} together 
with a simple duality argument then yield the claim.
\end{proof}

We let $\tau$ and $c_0$ be as in Theorem~\ref{thm:main}.

\begin{cor}
\label{cor:main}
There exists a positive number $\delta>0$, such that 
for $z\in\C$ such that $\mathrm{dist}_\C(z,\Omega)\le \delta\, \n^{-\frac14}$, we have
the asymptotics
\[
|P_{\n,n}|^2=\sqrt{\n}\,\e^{2 h^\approx +2\n \breve{Q}_\tau}
\Big(1+\Ordo\big(\e^{-c_0\sqrt{\n}}\big)\Big)
\]
as $\n, n\to+\infty$ with $\n=\tau\n\to+\infty$.
\end{cor}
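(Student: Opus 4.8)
The plan is to deduce Corollary~\ref{cor:main} from Theorem~\ref{thm:main} by converting the $L^2$-bound into a pointwise estimate on the relevant neighborhood of $\Omega$, using that holomorphic (or approximately holomorphic) functions satisfy sub-mean-value inequalities. Concretely, set $f(z)=\n^{\frac14}\phi^n\e^{h^\approx+\imag(h^\approx)^*+\n\mathcal{Q}}$ as in the proof of Theorem~\ref{thm:main}, so that
\[
|f|^2\e^{-2\n Q}=\sqrt{\n}\,\e^{2h^\approx-2\n\hatR^2}=\sqrt{\n}\,\e^{2h^\approx+2\n(\breve{Q}_\tau-Q)},
\]
using $\hatR^2=Q-\breve{Q}_\tau$ near $\Gamma$. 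Since $\breve{Q}_\tau=\check{Q}_\tau$ on $\Omega$ and extends harmonically to $\Omega_\epsilon$, and since $h^\approx$ is bounded and harmonic there (Lemma~\ref{lem:bdd-coeff-prel}), the claimed right-hand side $\sqrt{\n}\,\e^{2h^\approx+2\n\breve{Q}_\tau}$ is exactly $|f|^2$ times $\e^{2\n Q}$ corrected only near the boundary; so the content of the corollary is that $|P_{\n,n}|^2\e^{-2\n Q}$ agrees with $\sqrt{\n}\,\e^{2h^\approx-2\n\hatR^2}$ up to a relative error $\Ordo(\e^{-c_0\sqrt{\n}})$ on the set $\{\mathrm{dist}_\C(z,\Omega)\le\delta\n^{-\frac14}\}$.

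First I would record that on a $\n^{-\frac14}$-neighborhood of $\Omega$ one has the lower bound $|f(z)|^2\e^{-2\n Q(z)}=\sqrt{\n}\,\e^{2h^\approx-2\n\hatR^2}\gtrsim\sqrt{\n}\,\e^{-C}$, because $\hatR^2=Q-\breve{Q}_\tau\asymp\mathrm{dist}_\C(z,\Gamma)^2\lesssim\n^{-\frac12}$ there, so $\n\hatR^2=\Ordo(1)$, while $h^\approx$ is uniformly bounded. Thus $f$ is not small on this set; this is what makes a multiplicative (relative) error statement possible. Next, write $g=P_{\n,n}-\chi_2 f$, which by Theorem~\ref{thm:main} satisfies $\lVert g\rVert_{2\n Q}=\Ordo(\e^{-c_0\sqrt{\n}})$. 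On the smaller neighborhood where $\chi_2\equiv1$ the function $g=P_{\n,n}-f$ is holomorphic; I would apply the standard pointwise estimate for holomorphic functions in weighted Bergman space (a Bernstein–Walsh / sub-mean-value argument as in Proposition~2.3 of \cite{HW-ONP}), on disks of radius $\asymp\n^{-\frac12}$ centered at points within $\delta\n^{-\frac14}$ of $\Omega$, to get
\[
|g(z)|\,\e^{-\n Q(z)}\lesssim \n^{\frac12}\lVert g\rVert_{2\n Q}\,\e^{-\n\breve{Q}_\tau(z)+\Ordo(1)}=\Ordo\!\big(\n^{\frac12}\e^{-c_0\sqrt{\n}}\big)\,\e^{-\n\breve{Q}_\tau(z)}.
\]
Comparing with the lower bound $|f(z)|\e^{-\n Q(z)}\gtrsim\n^{\frac14}\e^{-\n\breve{Q}_\tau(z)-C}$ shows $|g(z)|/|f(z)|=\Ordo(\n^{\frac14}\e^{-c_0\sqrt{\n}})$, and hence after a harmless adjustment of the constant,
\[
|P_{\n,n}(z)|^2=|f(z)|^2\big(1+\Ordo(\e^{-c_0\sqrt{\n}})\big)^2=\sqrt{\n}\,\e^{2h^\approx+2\n\breve{Q}_\tau}\big(1+\Ordo(\e^{-c_0\sqrt{\n}})\big),
\]
once we absorb $\e^{2\n Q}\e^{-2\n\hatR^2}=\e^{2\n\breve{Q}_\tau}$ and recall $\breve{Q}_\tau$ is the analytic continuation used to define $\hatR$. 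The slight shrinkage of the exponential rate from $c_0$ to (say) $c_0/2$ is absorbed by relabeling, or one keeps $\n^{\frac12}\e^{-c_0\sqrt\n}=\Ordo(\e^{-c_0'\sqrt\n})$ for any $c_0'<c_0$; since the corollary only asserts existence of some positive $c_0$ this is immaterial, though to match the statement verbatim one simply notes $\n^{\frac14}\e^{-c_0\sqrt\n}=\Ordo(\e^{-c_0\sqrt\n})$ is false and instead proves it with a marginally smaller constant and renames.

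The main obstacle is the pointwise-from-$L^2$ passage precisely on the exterior of $\Omega$ where the weight $\e^{-2\n Q}$ is not comparable to $\e^{-2\n\breve{Q}_\tau}$ because $Q-\breve{Q}_\tau>0$ there: one must use disks small enough ($\asymp\n^{-\frac12}$) that $Q$ varies by $\Ordo(\n^{-1})$ across them, keep track of the factor $\e^{-\n\hatR^2}$, and verify that within the stated tube $\{\mathrm{dist}_\C(\cdot,\Omega)\le\delta\n^{-\frac14}\}$ one still has $\n\hatR^2=\Ordo(1)$ so the estimate is genuinely multiplicative; choosing $\delta$ small enough that the disk of radius $\n^{-\frac12}$ about any such point stays inside $\Omega_\epsilon$ (where $h^\approx$, $\breve Q_\tau$ and $\hatR$ are all controlled) is what pins down $\delta$. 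A secondary, purely bookkeeping point is handling the cutoff: on the annular region where $\chi_2\ne1$, $g$ is not holomorphic, but there $\mathrm{dist}_\C(z,\Omega)$ is bounded below by a fixed positive constant (since $\chi_2\equiv1$ on $\Omega\cup\mathcal{N}'$), so such $z$ do not lie in the tube $\{\mathrm{dist}_\C(\cdot,\Omega)\le\delta\n^{-\frac14}\}$ for large $\n$, and the argument is vacuous there — which is exactly why the corollary is stated only on that shrinking tube.
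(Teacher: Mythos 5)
Your overall strategy — apply Bernstein--Walsh to the $L^2$-error $g=P_{\n,n}-\chi_2 f$, lower-bound $|f|\e^{-\n Q}$ by $\e^{-\n\hatR^2}$, and take the ratio — is exactly what the paper's one-line proof refers to. However, there is a quantitative error at the crucial step which also makes you misidentify what determines $\delta$.

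You claim that on the tube $\{\mathrm{dist}_\C(z,\Omega)\le\delta\n^{-\frac14}\}$ one has $\n\hatR^2=\Ordo(1)$. This is false: from $\hatR^2\asymp\mathrm{dist}_\C(z,\Gamma)^2$, the correct bound is $\n\hatR^2\lesssim\delta^2\sqrt{\n}$, which grows like $\sqrt{\n}$, not $\Ordo(1)$. Consequently $|f|^2\e^{-2\n Q}$ is \emph{not} bounded below by a constant times $\sqrt{\n}$ on the tube; on the interior side of $\Gamma$ it can decay at the exponential rate $\e^{-C\delta^2\sqrt{\n}}$. (Had $\n\hatR^2=\Ordo(1)$ been the requirement, you would be forced into a tube of width $\n^{-\frac12}$, or to let $\delta\to0$, contradicting the fixed $\delta>0$ asserted in the corollary.) The reason the corollary holds with a \emph{fixed} $\delta$ is that the Bernstein--Walsh factor inside $\calS$ is $\e^{-\n(Q-\check Q_\tau)}=1$, so the relevant ratio of error to main term is $\n^{\frac14}\e^{-c_0\sqrt\n}\e^{+\n\hatR^2}$. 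This decays if and only if $\n\hatR^2\le(c_0-\varepsilon)\sqrt\n$, i.e.\ $\hatR^2\le(c_0-\varepsilon)\n^{-\frac12}$; since $\hatR^2\le C\,\mathrm{dist}_\C(z,\Gamma)^2$ near $\Gamma$, this pins down $\delta$ by $\delta^2<(c_0-\varepsilon)/C$. That competition — between the $\e^{-c_0\sqrt\n}$ error rate and the Gaussian $\e^{-\n\hatR^2}$ decay of the wave function into the bulk — is the comparison the paper's proof text highlights, and it is the mechanism which both determines $\delta$ and explains why the tube width is exactly $\n^{-\frac14}$. Your stated determinant of $\delta$ (fitting inside the fixed-size neighborhood $\Omega_\epsilon$) is vacuous for large $\n$ and does not constrain $\delta$ at all.

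Two smaller points: (i) in your displayed Bernstein--Walsh estimate and the lower bound on $|f|$, the exponents should read $\e^{-\n(Q-\check Q_\tau)}$ and $\e^{-\n(Q-\breve Q_\tau)}$ respectively rather than $\e^{-\n\breve Q_\tau}$; as written the formulas are dimensionally off, and moreover the distinction between $\check Q_\tau$ and $\breve Q_\tau$ inside $\calS$ is precisely what produces the $\e^{+\n\hatR^2}$ factor you need to control. (ii) Your final observation about $\chi_2$ being $\equiv 1$ on the tube for large $\n$ is correct and matches the paper's setup.
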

\begin{rem}
{\rm (a)}\; In terms of the \emph{wave function} $|P_{\n,n}|^2\,\e^{-2\n Q}$, we find that
\[
|P_{\n,n}(z)|^2\e^{-2\n Q(z)}=\sqrt{\n}\,\e^{2 h^\approx-2\n \hatR^2}
\Big(1+\Ordo\big(\e^{-c_0\sqrt{\n}}\big)\Big).
\]
{\rm (b)}\;In addition, if $I_0$ is a compact subinterval of $(0,1]$
such that the assumptions in Section~\ref{s:assumptions}
hold for all $\tau\in I_0$, then the implicit constant in Theorem~\ref{thm:main} and
Corollary~\ref{cor:main} remains uniformly bounded, provided that $\tau\in I_0$.
\end{rem}
\begin{proof}
The corollary follows from Theorem~\ref{thm:main} by a 
standard application of the Bernstein-Walsh inequality,
and a comparison between
$\e^{-c_0\sqrt{\n}}\e^{-2\n \hatR^2}$, which is the order of the error term,
and the function $\e^{-2\n(Q-\check{Q})}$ in the interior direction
of the droplet $\calS$ at $\Gamma$. The details are given in
\cite{HedenmalmPotential}, \S4.4.
\end{proof}

\section{Estimates for the nonlinear operator \texorpdfstring{$\Tope$}{}}
\label{s:est-Top}
\subsection{Banach scales of quantitatively real-analytic functions}
In this section and the next, we return to the notation $\s=\n^{-1}$ 
and agree to let $\s$ take complex values.
Recall that the equation \eqref{eq:master-E'} may equivalently 
be stated as the fixed point equation
\begin{equation}
\label{eq:Top-fixpt-rep}
\Tope \E=\E,
\end{equation} 
where $\Tope$ is the nonlinear operator
\begin{equation}\label{eq:Top-def}
\Tope[f]=\Tope_\s[f]\eqd \frac{1}{4\hatR|\partial\hatR|^2}
\Big\{L+\s\,\Sop[f] - 
\exp\Big(\Pop_\Omega\big[\log(L+\s\,\Sop [f])\big]\Big)\Big\},
\end{equation}
where $\Sop$ and $L$ were defined in \eqref{eq:Sop-def} 
and \eqref{eq:L-def}, respectively.
We will obtain approximate solutions $E^\approx$ by iteration of $\Tope$, i.e.
we put $\E^\approx=\E_k$ for a suitable $k=k_\s$, where
\[
\E_k=\Tope^{k+1}[0].
\]
In order to see to what extent this iteration scheme
will supply an approximate solution to \eqref{eq:Top-fixpt}
we will need Lipschitz-type
estimates for the operator $\Tope$. 
More specifically, in view of the calculation
\begin{multline}
\label{eq:Top-diff}
\Tope[f]-\Tope[g]\\
=\frac{1}{4\hatR|\partial\hatR|^2}
\bigg\{\s\,\Sop[f-g] 
+ \exp\Big(\Pop_\Omega\big[\log(L+\s\, \Sop [f])]\Big)
-\exp\Big(\Pop_\Omega\big[\log(L+\s\,\Sop [g])]\Big)\bigg\}
\\
=\frac{1}{4\hatR|\partial\hatR|^2}
\bigg\{\s\,\Sop[f-g] +\e^{\Pop_\Omega[\log L]}\Rop(f,g)\Big\},
\end{multline}
where we have introduced the notation
\begin{equation}
\label{eq:discrepancy-f-g}
\Rop(f,g)\eqd
\exp\Big(\Pop_\Omega\big[\log(1+\s\,\Sop [f]/L)]\Big)
-\exp\Big(\Pop_\Omega\big[\log(1+\s\,\Sop [g]/L)]\Big),
\end{equation}
the non-trivial part is to estimate the nonlinear contribution $\Rop(f,g)$
in terms of the size of $f-g$.
The problem is that the nonlinear operator $\Tope$ 
fails to be Lipschitz in the na{\"i}ve sense, e.g., 
when acting on spaces of real-analytic functions
on a given domain containing $\Gamma$. Instead, we will need to work with a decreasing 
scale of spaces $\mathscr{H}_\sigma^\infty(\Gamma)$ 
of suitably \emph{quantitatively real-analytic} functions
defined in a complexified neighborhood of the boundary curve $\Gamma$,
and we will be able to prove Lipschitz estimates for $\Tope$ 
only if we allow the co-domain to increase along this scale.

We follow the approach touched upon in \cite{HW-ONP} and 
further developed in \cite{HedenmalmPotential}, 
and denote by $\mathbb{A}(\rho)$ the annulus
\[
\A(\rho)=\{z\in\C:\rho\le |z|\le \rho^{-1}\}
\]
where $0<\rho<1$, and for $0<\sigma<1$ we introduce the $2\sigma$-fattened diagonal
annulus in $\C^2$ by
\[
\hat{\A}(\rho,\sigma)=\big\{(z,w)\in\A(\rho)\times \A(\rho): |z-w|\le 2\sigma\big\}.
\] 
It turns out to be especially convenient to
restrict attention to parameters $\rho$ and $\sigma$ such that
\[
\rho=\rho(\sigma)=\frac{1}{\sigma+\sqrt{1+\sigma^2}}=1-\sigma+\Ordo(\sigma^2),
\]
where the indicated error terms refers to the limiting procedure $\sigma\to0$.
We let $\hat{\A}(\sigma)$ denote the annulus $\hat{\A}(\sigma)=\hat{\A}(\rho(\sigma),\sigma)$.

We denote by $\mathscr{H}^\infty_\sigma(\Gamma)$ the space of real-analytic 
functions $f$ on the neighborhood of $\Gamma$ given by $\phi^{-1}(\A(\rho(\sigma)))$, 
such that the Hermitian-analytic 
polarization $f_\phi(z,w)$ of $f\circ\phi^{-1}$
is bounded on $\hat{\A}(\sigma)$. Here, we recall 
that $\phi$ is the exterior conformal mapping
$\phi:\Omega\to\D_\e$, which preserves the point at infinity, and we 
will only work with small enough parameters $0<\sigma<1$ such 
that $\phi^{-1}$ extends conformally to $\A(\rho(\sigma))$.
If we endow $\mathscr{H}^\infty_\sigma(\Gamma)$ with the norm
\[
\lVert f\rVert_\sigma=\sup_{(z,w)\in\hat{\A}(\sigma)}|f_\phi(z,w)|,
\]
it becomes a commutative Banach algebra.
We will show that $\Tope$ maps $\mathscr{H}_\sigma^\infty(\Gamma)$ into
$\mathscr{H}_{\sigma'}^\infty(\Gamma)$ whenever $\sigma>\sigma'$, and 
obtain appropriate Lipschitz bounds of the form
\[
\lVert \Tope(f)-\Tope(g)\rVert_{\sigma'}\le C\Big(\frac{1}{(\sigma')^2(\sigma-\sigma')^2}+
\frac{|\s|}{(\sigma')^2(\sigma-\sigma')^4}\Big)\,
|\s|\,\lVert f-g\rVert_{\sigma},
\]
valid for $f$ and $g$ e.g.\ in a ball of fixed radius $r_0$ in
$\mathscr{H}_\sigma^\infty(\Gamma)$ and for $\s$ in a fixed disk $\D(0,\varrho_0)$.
Since for some $\sigma=\sigma^*$, the initial iterate 
$\E_0=\Tope[0]$ belongs to a fixed ball in $\mathscr{H}_{\sigma^*}^\infty(\Gamma)$, 
this will allow us to keep good control of iterates $\Tope^k[0]$
as long as $k$ is not too large. We will quantify these statements shortly.

\subsection{Basic estimates}
To get started, we need the following Lemma. It 
follows directly from \cite[\S5.5]{HedenmalmPotential}.

\begin{lem}
\label{lem:div}
Assume that $f\in \mathscr{H}_\sigma^\infty(\Gamma)$ 
vanishes along $\Gamma$.
Then $(1-|\phi|^2)^{-1}f\in \mathscr{H}_{\sigma'}^\infty(\Gamma)$ 
for any $\sigma'<\sigma$, and we have
\[
\big\lVert (1-|\phi|^2)^{-1} f\big\rVert_{\sigma'}\le 
6\frac{\lVert f\rVert_{\sigma}}{\sigma-\sigma'}.
\]
\end{lem}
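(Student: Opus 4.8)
The plan is to use the standard transfer, employed throughout \cite{HW-ONP, HedenmalmPotential}, that moves the problem to the model annulus via the exterior conformal map $\phi$ and then to the two-variable Hermitian-analytic picture, where division by $1-|\phi|^2$ becomes division by the explicit factor $1-z\bar w$. Concretely, writing $f\circ\phi^{-1}$ for the transported function, real-analytic on $\A(\rho(\sigma))$, and $f_\phi$ for its Hermitian-analytic polarization on $\hat{\A}(\sigma)$, one has $(1-|\phi|^2)\circ\phi^{-1}(z)=1-|z|^2$, whose polarization is $1-z\bar w$; hence the polarization of $\big((1-|\phi|^2)^{-1}f\big)\circ\phi^{-1}$ is the formal quotient $f_\phi(z,w)/(1-z\bar w)$, and the lemma reduces to showing that this quotient is again Hermitian-analytic on the slightly smaller annulus $\hat{\A}(\sigma')$, together with the asserted norm bound. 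So the first thing I would do is set up this transfer carefully (valid for $\sigma$ small enough that $\phi^{-1}$ extends conformally to $\A(\rho(\sigma))$).

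The conceptual heart of the argument is a vanishing upgrade: although $f$ is only assumed to vanish on the diagonal curve $\Gamma$, its polarization $f_\phi$ in fact vanishes on the whole complexified anti-diagonal $\{(z,w):z\bar w=1\}$ inside $\A(\rho(\sigma))\times\A(\rho(\sigma))$. To see this I would write $f_\phi(z,w)=G(z,\bar w)$ with $G$ holomorphic in both slots, and observe that $z\mapsto G(z,1/z)$ is holomorphic on $\A(\rho(\sigma))$ and agrees with $(f\circ\phi^{-1})(z)$ on the unit circle $\T$; hence it vanishes on $\T$ and therefore identically, which says precisely that $f_\phi(z,w)=0$ whenever $z\bar w=1$. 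Consequently the zero variety of the denominator $1-z\bar w$ lies inside the zero set of the numerator, the apparent singularity of $f_\phi/(1-z\bar w)$ along $\{z\bar w=1\}$ is removable, and the quotient is Hermitian-analytic on $\hat{\A}(\sigma)$; in particular $(1-|\phi|^2)^{-1}f$ is real-analytic in a neighbourhood of $\Gamma$.

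For the quantitative estimate I would fix $(z_0,w_0)\in\hat{\A}(\sigma')$ and bound $|f_\phi(z_0,w_0)|/|1-z_0\bar w_0|$ by restricting $f_\phi/(1-z\bar w)$ to a one-dimensional holomorphic slice $\lambda\mapsto(z(\lambda),w(\lambda))$ through $(z_0,w_0)$ that reaches the variety $\{z\bar w=1\}$ and that stays inside $\hat{\A}(\sigma)$ on a disk of radius comparable to $\sigma-\sigma'$; there $|f_\phi|\le\lVert f\rVert_\sigma$, and the forced vanishing of $f_\phi$ where the slice crosses $\{z\bar w=1\}$ lets one apply the Schwarz lemma. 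The factor $(\sigma-\sigma')^{-1}$ in the final bound is exactly the reciprocal of this radius, and the absolute constant $6$ comes out of tracking the geometry of the fattened diagonal annuli, using the defining relation $\rho(\sigma)^{-1}-\rho(\sigma)=2\sigma$, which makes the radial width of $\A(\rho(\sigma))$ comparable to the diagonal fattening $2\sigma$. This computation is precisely the one in \cite[\S5.5]{HedenmalmPotential}, so in practice I would simply invoke it, as the statement of the lemma indicates.

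The hard part is this last quantitative step: the vanishing upgrade and the removability are immediate, but extracting a division bound with the sharp dependence $(\sigma-\sigma')^{-1}$ and a universal constant requires a careful choice of analytic slice and bookkeeping of how $\hat{\A}(\sigma')$ sits inside $\hat{\A}(\sigma)$ — most delicately in the regime where $(z_0,w_0)$ lies close to the anti-diagonal $\{z\bar w=1\}$, where the denominator $1-z_0\bar w_0$ is small and the entire gain must come from the Schwarz estimate rather than from a crude pointwise bound. Since this is exactly what is carried out in the cited reference, the lemma follows directly.
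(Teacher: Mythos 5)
Your proposal correctly identifies the structure of the argument behind the lemma: transfer to the model annulus via $\phi$, the upgrade of the vanishing of $f_\phi$ from the curve $\Gamma$ (i.e.\ the diagonal restriction to $\T$) to the full complexified variety $\{z\bar w=1\}$ by uniqueness of analytic continuation, the resulting removability of the singularity of $f_\phi/(1-z\bar w)$, and a Schwarz-lemma division estimate with loss $\sigma-\sigma'$ — and, like the paper, you ultimately defer the quantitative bookkeeping (the sharp constant $6$ and the geometry of the nested fattened annuli) to \cite[\S5.5]{HedenmalmPotential}. Since the paper's own proof consists entirely of that citation, your approach is essentially the same; the conceptual sketch you add is correct and faithfully reflects what that reference does.
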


As a first step, we show that the two linear operators $\Pop_\Omega$ and $\Sop$
act boundedly as operators $\mathscr{H}^\infty_\sigma(\Gamma)\to \mathscr{H}^\infty_{\sigma'}(\Gamma)$ 
whenever $\sigma>\sigma'$. In fact, for $\Pop_\Omega$ this holds true for $\sigma=\sigma'$
as well, and we have
\begin{equation}\label{eq:sigma-bound-P}
\lVert \Pop_\Omega\rVert_{\sigma\to\sigma}\eqd
\lVert \Pop_\Omega\rVert_{\mathscr{H}^\infty_\sigma(\Gamma) 
\to \mathscr{H}^\infty_{\sigma}(\Gamma)}\le \frac{6}{\sigma},
\end{equation}
as follows from \cite[\S5.5]{HedenmalmPotential}.
With the simplifying notation
\begin{equation}\label{eq:sigma-bound-S}
\lVert \Sop\rVert_{\sigma\to\sigma'}
\eqd\lVert \Sop\rVert_{\mathscr{H}^\infty_\sigma(\Gamma) 
\to \mathscr{H}^\infty_{\sigma'}(\Gamma)}
\end{equation}
for the operator norm of $\Sop$, we have the following result.

\begin{prop}
\label{prop:S-P-bd}
For $\sigma$ and $\sigma'$ with $0<\sigma'<\sigma<1$, we have that
\begin{multline}
\lVert \Sop\rVert_{\sigma\to\sigma'}\le \lVert \vDelta V\rVert_{\sigma} +
\frac{12\sigma'\lVert \partial\hatR\rVert_{\sigma}\lVert \phi'\rVert_{\sigma}
+42\lVert(\partial V)^{2}\rVert_{\sigma}
\lVert (1-|\phi|^2)/\hatR\rVert_{\sigma}}{(\sigma-\sigma')\sigma'}
\\+ |\s|
\frac{10206 \big\lVert \phi'\big\rVert_{\sigma}^2\,
\big\lVert(1-|\phi|^2)/\hatR\big\rVert_{\sigma}}{(\sigma-\sigma')^3\sigma'}.
\end{multline}
\end{prop}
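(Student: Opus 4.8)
The plan is to estimate the operator norm of $\Sop$ term by term, using the definition \eqref{eq:Sop-def}, namely
\[
\Sop[f]=\partial\hatR\,\bar\partial f+\bar\partial\hatR\,\partial f+f\,\vDelta\hatR
+|\partial\hatR|^2\,\frac{(\Pop_\Omega-\Iop)f}{\hatR}
+\frac{\s}{4}\,\vDelta\frac{(\Pop_\Omega-\Iop)f}{\hatR}.
\]
The first step is to record that $\mathscr{H}^\infty_\sigma(\Gamma)$ is a Banach algebra, so that multiplying by a fixed real-analytic function such as $\partial\hatR$, $\bar\partial\hatR$, or $\vDelta\hatR$ costs only the $\sigma$-norm of that factor; this handles the third term $f\,\vDelta\hatR$ immediately, contributing $\lVert\vDelta V\rVert_\sigma\,\lVert f\rVert_\sigma$. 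For the first two (derivative) terms $\partial\hatR\,\bar\partial f$ and $\bar\partial\hatR\,\partial f$, the essential point is that on the polarized side $\bar\partial_z f$ corresponds to a $\bar w$-derivative of $f_\phi(z,w)$ (composed with the conformal change of variables, which brings in $\phi'$), and such a derivative drops the radius of the fattened-diagonal annulus: by a Cauchy-estimate on $\hat{\A}(\sigma)$ versus $\hat{\A}(\sigma')$ one gains a factor $\Ordo\big((\sigma-\sigma')^{-1}\big)$ together with the $\sigma'$-dependent factor from the geometry of the annulus, which accounts for the $\sigma'/((\sigma-\sigma')\sigma')$ shape of the first fraction. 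This is the origin of the term $12\sigma'\lVert\partial\hatR\rVert_\sigma\lVert\phi'\rVert_\sigma$.

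The fourth term involves the operator $(\Pop_\Omega-\Iop)$ followed by division by $\hatR$. Here I would use two facts proved earlier: first, $\Pop_\Omega$ is bounded $\mathscr{H}^\infty_\sigma\to\mathscr{H}^\infty_\sigma$ with norm at most $6/\sigma$ by \eqref{eq:sigma-bound-P}, so $(\Pop_\Omega-\Iop)f$ has norm $\Ordo(\sigma^{-1})\lVert f\rVert_\sigma$ — but crucially $(\Pop_\Omega-\Iop)f$ \emph{vanishes on $\Gamma$}, since $\Pop_\Omega f=f$ on the boundary. Then Lemma~\ref{lem:div} (division by $1-|\phi|^2$, equivalently by a quantity comparable to $\hatR$ up to a real-analytic non-vanishing factor near $\Gamma$) applies and yields the gain $\Ordo\big((\sigma-\sigma')^{-1}\big)$ in passing from $\sigma$ to $\sigma'$; the algebra property then absorbs the factor $(1-|\phi|^2)/\hatR$ and $|\partial\hatR|^2 = (\partial V)^2$ in modulus. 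This produces the $42\,\lVert(\partial V)^2\rVert_\sigma\,\lVert(1-|\phi|^2)/\hatR\rVert_\sigma$ contribution. For the last, $\s$-dependent term $\tfrac{\s}{4}\vDelta\big((\Pop_\Omega-\Iop)f/\hatR\big)$, I would note that $(\Pop_\Omega-\Iop)f/\hatR$ is already controlled in $\mathscr{H}^\infty_{\sigma''}$ for some intermediate $\sigma''$ by the previous step, and then applying the second-order operator $\vDelta$ costs two more Cauchy-type derivative losses, i.e. a factor $\Ordo\big((\sigma-\sigma')^{-2}\big)$ beyond the one already spent on the division, giving the overall $(\sigma-\sigma')^{-3}$ in the denominator; the factor $\lVert\phi'\rVert_\sigma^2$ enters because $\vDelta$ in the $z$-variable transforms under $\phi$ with two factors of $\phi'$ (and $\bar\phi'$, of equal modulus). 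Summing the five contributions and optimizing the intermediate parameter splittings (e.g. choosing the intermediate radius halfway, which only affects the numerical constants) gives the stated bound.

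The main obstacle I expect is bookkeeping the conformal change of variables carefully on the polarized side: the operators $\partial$, $\bar\partial$, $\vDelta$ act in the $z$-variable, but the norms $\lVert\cdot\rVert_\sigma$ are defined via the polarization $f_\phi(z,w)$ of $f\circ\phi^{-1}$, so one must keep straight how a $z$-derivative of $f$ translates into derivatives of $f_\phi$ in \emph{both} slots (and which slot carries the $\phi'$ Jacobian), and then apply the Cauchy estimates on the correct two-dimensional polydisc-like region $\hat{\A}(\sigma)$ rather than on a one-variable disc. A secondary technical point is ensuring that the division by $\hatR$ in the last two terms really is covered by Lemma~\ref{lem:div}: one needs that $\hatR$ and $1-|\phi|^2$ differ by a real-analytic factor that is bounded away from $0$ and $\infty$ near $\Gamma$, which follows from $\hatR$ having non-vanishing gradient along $\Gamma$ and $\phi$ mapping $\Gamma$ to the unit circle, but this comparison must be made quantitative to extract clean constants. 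Once these two points are handled, the rest is the routine term-by-term estimation sketched above.
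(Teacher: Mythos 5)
Your proposal is correct and follows essentially the same route as the paper: term-by-term estimation via the Banach-algebra property, the $\bar\partial$-type Cauchy estimate \eqref{eq:d-bar-bd} for the derivative terms, the bound \eqref{eq:I-P-bd} on $(\Iop-\Pop_\Omega)$ followed by Lemma~\ref{lem:div} (with the bounded multiplier $(1-|\phi|^2)/\hatR$ converting division by $1-|\phi|^2$ into division by $\hatR$) for the fourth term, and two further Cauchy-type losses at intermediate radii for the last term to produce the $(\sigma-\sigma')^{-3}$. The paper uses explicit intermediate parameters $\sigma_0,\sigma_1$ at the thirds and then simplifies constants by monotonicity of $\lVert\cdot\rVert_\sigma$ in $\sigma$, but this is the same bookkeeping you describe.
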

Ignoring the precise value of the constants, we have
\begin{equation}
\label{eq:bd-Sop-simple}
\lVert \Sop\rVert_{\sigma\to\sigma'}\le M_0(\sigma,\sigma',\s):=C_0\left(
\frac{1}{(\sigma-\sigma')\sigma'}+\frac{|\s|}{(\sigma-\sigma')^3\sigma'}\right)
\end{equation}
where the constant $C_0$ depends only on $V$ and $\phi$, provided that $\sigma$
and $\sigma'$ are small enough (depending only on $V$ and $\phi$).
\begin{proof}
Before proceeding to estimate
the operator norm of $\Sop$, we recall the bounds
\begin{equation}
\label{eq:d-bar-bd}
\big\lVert \bar\partial f\big\rVert_{\sigma'}\le 
\frac{6\lVert f\rVert_\sigma\lVert \phi'\rVert_{\sigma'}}{\sigma-\sigma'}
\end{equation}
and
\begin{equation}
\label{eq:I-P-bd}
\big\lVert (\Iop-\Pop_\Omega)f\big\rVert_{\sigma}\le 
\frac{7\lVert f\rVert_{\sigma}}{\sigma}
\end{equation}
which follow from the equation (5.5.2) in 
\cite{HedenmalmPotential} together with a change of variables, 
and from (5.5.3) in \cite{HedenmalmPotential}, respectively.

The operator $\Sop$ acts by
\[
\Sop(f)=\partial\hatR \bar\partial f+\bar\partial\hatR \partial f
+f\vDelta \hatR +|\partial\hatR|^2\frac{(\Pop_\Omega-\Iop)f}{\hatR}
+\frac{\s}{2}\vDelta\frac{(\Pop_\Omega-\Iop)f}{2\hatR},
\]
so it follows from the triangle inequality and the above bound \eqref{eq:d-bar-bd} 
for the $\bar\partial$-operator that
\begin{multline}
\lVert \Sop\rVert_{\sigma\to\sigma'}\le 
\frac{12\lVert \partial\hatR\rVert_{\sigma'}\lVert \phi'\rVert_{\sigma'}}{\sigma-\sigma'}
+\lVert \vDelta V\rVert_{\sigma} 
+\big\lVert(\partial V)^{2}\big\rVert_{\sigma'}
\sup_{\lVert f\rVert_{\sigma}=1}
\bigg\lVert\frac{(\Iop-\Pop_\Omega)f}{\hatR}\bigg\rVert_{\sigma'}\\
+ \frac{|\s|}{4}\sup_{\lVert f\rVert_{\sigma}= 1}
\bigg\lVert\vDelta\frac{(\Pop_\Omega-\Iop)f}{\hatR}\bigg\rVert_{\sigma'}.
\end{multline}
It remains to estimate the last two terms. The first term may be estimated
using \eqref{eq:I-P-bd} and Lemma~\ref{lem:div}, and we obtain
\[
\sup_{\lVert f\rVert_{\sigma}=1}
\bigg\lVert\frac{(\Iop-\Pop_\Omega)f}{\hatR}\bigg\rVert_{\sigma'}
\le \frac{42\big\lVert V^{-1}(1-|\phi|^2)\big\rVert_{\sigma'}}{(\sigma-\sigma')\sigma}.
\] 
The last remaining term will be 
handled by repeated application of \eqref{eq:d-bar-bd} and \eqref{eq:I-P-bd}.
Namely, we insert two intermediate parameters $\sigma_0=\sigma'+\frac13(\sigma+\sigma')$ and 
$\sigma_1+\frac23(\sigma+\sigma')$ and apply \eqref{eq:d-bar-bd}
to obtain
\[
\bigg\lVert\Delta\frac{(\Iop-\Pop_\Omega)f}{\hatR}\bigg\rVert_{\sigma'}
\le \frac{6\lVert \partial g\rVert_{\sigma_0}\lVert \phi'\rVert_{\sigma'}}{\sigma_0-\sigma'},
\]
where we have introduced the auxiliary function
\[
g=\frac{(\Iop-\Pop_\Omega)}{\hatR}f.
\]
But applying \eqref{eq:d-bar-bd} again (formally a conjugated version)
followed by the Banach algebra property and Lemma~\ref{lem:div}, we find that
\[
\lVert \partial g\rVert_{\sigma_0}\le \frac{6\lVert \phi'\rVert_{\sigma_0}}{(\sigma_1-\sigma_0)}
\bigg\lVert\frac{(\Iop-\Pop_\Omega)f}{\hatR} \bigg\rVert_{\sigma_1}
\le \frac{36\times 7 \lVert \phi'\rVert_{\sigma_0}\big\lVert(1-|\phi|^2)/\hatR\big\rVert_{\sigma_1}}
{(\sigma_1-\sigma_0)(\sigma-\sigma_1)\sigma_1}
\lVert f\rVert_{\sigma}.
\]
Putting these observations together, we arrive at
\begin{multline}
\lVert \Sop\rVert_{\sigma\to \sigma'}\le
\frac{12\sigma'\lVert \partial\hatR\rVert_{\sigma'}\lVert \phi'\rVert_{\sigma'}
+42\lVert(\partial V)^{2}\rVert_{\sigma'}\big\lVert(1-|\phi|^2)/\hatR
\big\rVert_{\sigma'}}{(\sigma-\sigma')\sigma'}
\\+\lVert \vDelta V\rVert_{\sigma} + \frac{|\s|}{4}
\frac{6^3\times 7 \lVert \phi'\rVert_{\sigma_0}\lVert \phi'\rVert_{\sigma'}
\big\lVert(1-|\phi|^2)/\hatR
\big\rVert_{\sigma_1}}{(\sigma_0-\sigma')(\sigma_1-\sigma_0)(\sigma-\sigma_1)\sigma_1}
\end{multline}
The proof follows by calculating the constants and using the
monotonicity of the norm 
$\lVert \cdot\rVert_{\sigma}$ in $\sigma$.
\end{proof}

We record the following simple bound for some quantities related to $\hatR$ and $\phi$
which will appear repeatedly.

\begin{prop}
\label{prop:L-bd}
There exist constants $0<\sigma^*<1$ and $\epsilon_0>0$, depending only on $\hatR$ and $\phi$,
such that all the quantities 
\begin{equation}
\big\lVert (\partial V)^{-2}\big\rVert_{\sigma}, 
\quad \big\lVert V^{-1}(1-|\phi|^2)\big\rVert_{\sigma},\quad 
\big\lVert \phi'\big\rVert_{\sigma},\quad \text{and} 
\quad \big\lVert \partial V\big\rVert_{\sigma},
\end{equation}
as well as
\begin{equation}
\sup_{|\s|\le\epsilon_0}\big\lVert 1/L\big\rVert_{\sigma},\quad\text{ and }\quad  
\sup_{|\s|\le\epsilon_0}\big\lVert \Tope[0]\big\rVert_{\sigma}
\end{equation}
are uniformly bounded for $0<\sigma\le \sigma^*$.
\end{prop}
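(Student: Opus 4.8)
The plan is to reduce Proposition~\ref{prop:L-bd} to one assertion: each of the listed quantities is the restriction to $\Gamma$ of a function which is real-analytic on \emph{one fixed} complex neighbourhood $\mathcal N_0$ of $\Gamma$, with $\mathscr H^\infty_{\sigma_0}(\Gamma)$-norm bounded by a constant depending only on $V$ and $\phi$; here $\sigma_0$ is taken small enough that $\phi^{-1}$ is bi-analytic on $\A(\rho(\sigma_0))$ and $\phi^{-1}(\A(\rho(\sigma_0)))\subset\mathcal N_0$. Granting this, I would invoke the monotonicity of $\lVert\cdot\rVert_\sigma$ in $\sigma$ (as used at the end of the proof of Proposition~\ref{prop:S-P-bd}): for $\sigma\le\sigma_0$ one has $\lVert\cdot\rVert_\sigma\le\lVert\cdot\rVert_{\sigma_0}$, and the uniform bound over $0<\sigma\le\sigma^*$ drops out as soon as $\sigma^*\le\sigma_0$.

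For the four quantities in the first display this is read off from the standing assumptions of \S\ref{s:pot-theory}: $\phi$ extends conformally across the real-analytically smooth curve $\Gamma$, so $\phi'$ is holomorphic near $\Gamma$; and $V=\hatR$ is real-analytic with non-vanishing gradient in a neighbourhood of $\Gamma$, so $\partial V$ is real-analytic and non-vanishing there, whence $(\partial V)^{-2}$ — and likewise $|\partial V|^{-2}=(\partial V\,\bar\partial V)^{-1}$ — is real-analytic near $\Gamma$. For $V^{-1}(1-|\phi|^2)$ I would note that $V$ and $1-|\phi|^2$ each vanish to exactly first order along $\Gamma$ with non-vanishing gradient, so their quotient extends real-analytically across $\Gamma$; this is the Weierstrass-division point already used for $G$ in \eqref{eq:G-sol} (cf.\ \S6.1 of \cite{HW-ONP} and \S5.5 of \cite{HedenmalmPotential}).

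For $1/L$, I would write $L=\coeffL_0+\s\coeffL_1$ with $\coeffL_0=2|\partial V|^2\,\tfrac{\log|\phi|}{V}$ and $\coeffL_1$ a fixed multiple of $\vDelta\tfrac{\log|\phi|}{V}$. Since $\log|\phi|$ and $V$ vanish simply along $\Gamma$ with the same sign on either side of it (both positive on $\Omega$ and negative inside), the ratio $\tfrac{\log|\phi|}{V}$ is real-analytic and strictly positive near $\Gamma$; hence $\coeffL_0$ is real-analytic, strictly positive and bounded away from $0$ on $\mathcal N_0$ (after shrinking), so $1/\coeffL_0$ and $\coeffL_1$ both lie in $\mathscr H^\infty_{\sigma_0}(\Gamma)$ with $(V,\phi)$-dependent bounds. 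Picking $\epsilon_0>0$ so small that $\epsilon_0\lVert\coeffL_1\rVert_{\sigma_0}\lVert1/\coeffL_0\rVert_{\sigma_0}\le\tfrac12$, the Banach-algebra property of $\mathscr H^\infty_{\sigma_0}(\Gamma)$ makes the Neumann series $1/L=(1/\coeffL_0)\sum_{k\ge0}(-\s\coeffL_1/\coeffL_0)^k$ converge there for $|\s|\le\epsilon_0$, with norm $\le 2\lVert1/\coeffL_0\rVert_{\sigma_0}$; the same reasoning applied to the series for $\log(1+\cdot)$ shows that $\log L=\log\coeffL_0+\log(1+\s\coeffL_1/\coeffL_0)\in\mathscr H^\infty_{\sigma_0}(\Gamma)$ with a $(V,\phi)$-dependent bound for $|\s|\le\epsilon_0$.

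Finally, since $\Sop$ is linear, $\Tope[0]=\tfrac{1}{4V|\partial V|^2}\big\{L-\exp(\Pop_\Omega[\log L])\big\}$. I would apply \eqref{eq:sigma-bound-P} \emph{at the fixed level} $\sigma_0$ to get $\Pop_\Omega[\log L]\in\mathscr H^\infty_{\sigma_0}(\Gamma)$ with norm $\le\tfrac{6}{\sigma_0}\lVert\log L\rVert_{\sigma_0}$ — crucially, with no loss of domain and a fixed norm bound — and then $\exp(\Pop_\Omega[\log L])\in\mathscr H^\infty_{\sigma_0}(\Gamma)$ with a fixed bound via the exponential series. The bracketed function $L-\exp(\Pop_\Omega[\log L])$ then lies in $\mathscr H^\infty_{\sigma_0}(\Gamma)$ with a fixed bound and vanishes along $\Gamma$ (since $\Pop_\Omega[\log L]=\log L$ on $\Gamma$), so Lemma~\ref{lem:div} lets me divide it by $1-|\phi|^2$ at the cost of passing to $\sigma^*\eqd\tfrac12\sigma_0$ and a fixed factor; multiplying by $V^{-1}(1-|\phi|^2)$ and $|\partial V|^{-2}$, both in $\mathscr H^\infty_{\sigma_0}(\Gamma)\subset\mathscr H^\infty_{\sigma^*}(\Gamma)$ with fixed bounds, and using the Banach-algebra property once more gives $\Tope[0]\in\mathscr H^\infty_{\sigma^*}(\Gamma)$ with a $(V,\phi)$-dependent bound, uniformly for $|\s|\le\epsilon_0$. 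The delicate point — and the one I would be most careful about — is the bookkeeping of where domain is lost: every Poisson extension and every algebraic or analytic operation ($\exp$, $\log$, products, the Neumann inversion) is performed at the single fixed level $\sigma_0$ with fixed norm constants, and domain shrinks exactly once, from $\sigma_0$ to $\sigma^*$, in the application of Lemma~\ref{lem:div}; hence there is no cumulative $\sigma^{-1}$-type blow-up, and $\sigma^*,\epsilon_0$ genuinely depend only on $V$ and $\phi$.
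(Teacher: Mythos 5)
Your proposal is correct and follows the same route as the paper's proof, which is essentially a one-paragraph version of what you wrote: the paper asserts that ``the relevant quantities are bounded for some $\sigma^*$ and for $\s=0$,'' notes $\coeffL_1$ is likewise bounded, and invokes Lipschitz continuity of $1/t$ (away from $0$) and of $\exp$ to handle small $\s$, finishing with monotonicity of $\lVert\cdot\rVert_\sigma$. You have merely made explicit what the paper leaves as ``clear'' — the Weierstrass-division argument for $V^{-1}(1-|\phi|^2)$ and $\log|\phi|/V$, the Neumann-series form of the Lipschitz estimate for $1/L$, the single application of Lemma~\ref{lem:div} in $\Tope[0]$, and the important bookkeeping that domain loss occurs exactly once so that $\sigma^*$ and $\epsilon_0$ depend only on $V$ and $\phi$.
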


\begin{proof}
We recall the definitions \eqref{eq:L-def} and $\eqref{eq:Top-def}$ 
of $L$ and $\Tope$, respectively, and write $L=\coeffL_0+\s\coeffL_1$
where the coefficient functions do not depend on $\s$.
We have
\[
\Tope[0]= \frac{1}{4\hatR|\partial\hatR|^2}
\Big\{L - \exp\Big(\Pop_\Omega\big[\log L\big]\Big)\Big\}=
\Big\{\coeffL_0+\s\coeffL_1 - \exp\Big(\Pop_\Omega\big[\log \coeffL_0+\s\coeffL_1\big]\Big)\Big\}.
\]
It is clear that the relevant quantities are bounded for some $\sigma^*$ and for $\s=0$.
Possibly after shrinking $\sigma^*$ slightly, the coefficient $\coeffL_1$ 
is also bounded in $\mathscr{H}^\infty_{\sigma^*}(\Gamma)$. 
Using the fact that both $1/t$ and $\exp(t)$ are Lipschitz (the former away from the origin),
we conclude that for $\s$ sufficiently small, both the quantities 
\[
\sup_{|\s|\le\epsilon_0}\big \lVert 1/L\big \rVert_{\sigma^*},\quad 
\sup_{|\s|\le\epsilon_0}\big\lVert \Tope[0]\big\rVert_{\sigma^*}
\]
are bounded. The claim now follows from the monotonicity 
of $\lVert \cdot\rVert_{\sigma}$ in $\sigma$.
\end{proof}
We fix, once and for all, the radius $r_0$ to be
\begin{equation}
\label{eq:r0}
r_0:=2\sup_{|\s|\le \epsilon_0}\big\lVert \Tope[0]\big\rVert_{\sigma^*}
=2\sup_{|\s|\le \epsilon_0}\big\lVert \calE_0\big\rVert_{\sigma^*}.
\end{equation}

\subsection{A preliminary Lipschitz-type estimate}
We proceed to bound the 
quantity $\Rop(f,g)$ that appeared in \eqref{eq:discrepancy-f-g}.

\begin{prop}
\label{prop:bd-R}
Fix $\sigma$ and $\sigma'$ with $0<\sigma'<\sigma<1$.
There exists a constant $\n_1$, depending only on $L$, $\Omega$, $\sigma$, $\sigma'$ and $r_0$,
such that for $|\s|\le \frac{1}{\n_1}$ and for all $f$ and $g$ in the
ball of radius $r_0$ in $\mathscr{H}^\infty_\sigma(\Gamma)$, we have that
\begin{equation}
\lVert \Rop(f,g)\rVert_{\sigma'}\le
6 |\s|\,\lVert \Pop_\Omega\rVert_{\sigma'\to\sigma'}
\lVert \Sop\rVert_{\sigma\to\sigma'}\lVert 1/L\rVert_{\sigma'}\lVert f-g\rVert_{\sigma}.
\end{equation}
\end{prop}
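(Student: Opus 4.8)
The plan is to estimate $\Rop(f,g)$ by writing the difference of the two exponentials as an integral of its derivative along the straight-line path connecting the two exponents, and then to bound each factor that appears using the Banach-algebra structure of $\mathscr{H}^\infty_\sigma(\Gamma)$ together with the operator bounds already at hand. To set up, abbreviate $u_f = \Pop_\Omega[\log(1+\s\,\Sop[f]/L)]$ and $u_g = \Pop_\Omega[\log(1+\s\,\Sop[g]/L)]$, so that $\Rop(f,g) = \e^{u_f} - \e^{u_g}$. The elementary identity
\[
\e^{u_f}-\e^{u_g}=(u_f-u_g)\int_0^1 \e^{t u_f + (1-t)u_g}\,\diff t
\]
reduces the task to estimating $\lVert u_f - u_g\rVert_{\sigma'}$ and the integral factor separately, using that $\mathscr{H}^\infty_{\sigma'}(\Gamma)$ is a commutative Banach algebra (so products and the exponential series behave well).

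First I would handle the exponential integral factor. Since $f$ and $g$ lie in the ball of radius $r_0$ in $\mathscr{H}^\infty_\sigma(\Gamma)$, the bound \eqref{eq:bd-Sop-simple} gives $\lVert \Sop[f]\rVert_{\sigma'},\lVert \Sop[g]\rVert_{\sigma'}\le r_0 M_0(\sigma,\sigma',\s)$, and combining this with $\lVert 1/L\rVert_{\sigma'}$ (uniformly bounded for small $|\s|$ by Proposition~\ref{prop:L-bd}) and \eqref{eq:sigma-bound-P} shows that both $\lVert \s\,\Sop[f]/L\rVert_{\sigma'}$ and $\lVert \s\,\Sop[g]/L\rVert_{\sigma'}$ are $\le \tfrac12$ once $|\s|\le 1/\n_1$ for a suitable $\n_1$ depending on $L$, $\Omega$, $\sigma$, $\sigma'$, $r_0$. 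Then $\log(1+\cdot)$ is given by a convergent power series in the Banach algebra with norm $\le 2\lVert\cdot\rVert_{\sigma'}$, so $\lVert u_f\rVert_{\sigma'},\lVert u_g\rVert_{\sigma'}\le 2\lVert\Pop_\Omega\rVert_{\sigma'\to\sigma'}\cdot \tfrac12 = \lVert\Pop_\Omega\rVert_{\sigma'\to\sigma'}$, which we may further shrink to be $\le \log 2$, say, by a further reduction of $|\s|$. Hence $\lVert \e^{t u_f + (1-t)u_g}\rVert_{\sigma'}\le \e^{\log 2} = 2$ uniformly in $t\in[0,1]$, so the integral factor contributes at most $2$ in $\mathscr{H}^\infty_{\sigma'}(\Gamma)$-norm.

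Next I would estimate $\lVert u_f - u_g\rVert_{\sigma'}$. Since $\Pop_\Omega$ is linear and bounded on $\mathscr{H}^\infty_{\sigma'}(\Gamma)$,
\[
\lVert u_f - u_g\rVert_{\sigma'}\le \lVert\Pop_\Omega\rVert_{\sigma'\to\sigma'}\,
\Big\lVert \log\!\big(1+\tfrac{\s\Sop[f]}{L}\big)-\log\!\big(1+\tfrac{\s\Sop[g]}{L}\big)\Big\rVert_{\sigma'}.
\]
Writing $\log(1+a)-\log(1+b) = (a-b)\int_0^1 (1+ta+(1-t)b)^{-1}\,\diff t$ and using that $\lVert ta+(1-t)b\rVert_{\sigma'}\le\tfrac12$, the inverse factor has norm $\le 2$, so the bracketed quantity is at most $2\,\lVert \s\Sop[f-g]/L\rVert_{\sigma'}\le 2|\s|\,\lVert\Sop\rVert_{\sigma\to\sigma'}\lVert 1/L\rVert_{\sigma'}\lVert f-g\rVert_{\sigma}$. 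Multiplying the three contributions — the integral factor bounded by $2$, the factor $\lVert\Pop_\Omega\rVert_{\sigma'\to\sigma'}$, and $2|\s|\,\lVert\Sop\rVert_{\sigma\to\sigma'}\lVert 1/L\rVert_{\sigma'}\lVert f-g\rVert_{\sigma}$ — gives a bound of the form $4\,|\s|\,\lVert\Pop_\Omega\rVert_{\sigma'\to\sigma'}\lVert\Sop\rVert_{\sigma\to\sigma'}\lVert 1/L\rVert_{\sigma'}\lVert f-g\rVert_{\sigma}$, and after tracking the numerical constants carefully one lands at the stated constant $6$.

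The step I expect to be the main obstacle is keeping the smallness condition $\lVert \s\,\Sop[f]/L\rVert_{\sigma'}\le \tfrac12$ (and the analogue with the exponent norms) \emph{uniform} over the whole ball of radius $r_0$ and over all admissible $\s$: this forces $\n_1$ to depend on $\sigma,\sigma'$ through $M_0(\sigma,\sigma',\s)$, which blows up as $\sigma'\to\sigma$, so one must be careful that the final estimate is stated with the $\sigma\to\sigma'$ operator norm of $\Sop$ left intact (as in the statement) rather than prematurely substituting \eqref{eq:bd-Sop-simple}. A secondary technical point is justifying that the path-integral identities for $\e^{(\cdot)}$ and $\log(1+\cdot)$ hold at the level of Banach-algebra-valued functions, which is routine since both arise from everywhere-convergent (resp. convergent-on-the-unit-ball) power series and the relevant elements have norm $<1$.
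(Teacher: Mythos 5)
Your proposal is correct and follows essentially the same route as the paper: both decompose $\Rop(f,g)$ through the chain $\exp\circ\Pop_\Omega\circ\log(1+\cdot)$, impose a smallness condition on $\s$ (via a suitable $\n_1$) so that the arguments of $\log$ and $\exp$ lie in the region where Lipschitz control is available, and then multiply the three factor bounds. The only variation is cosmetic: you derive the Lipschitz estimates for $\exp$ and $\log(1+\cdot)$ by writing them as Banach-algebra-valued path integrals, whereas the paper applies the scalar pointwise bounds $|\exp t-\exp t'|\le 3|t-t'|$ (for $|t|,|t'|\le1$) and $|\log(1+t)-\log(1+t')|\le 2|t-t'|$ (for $|t|,|t'|\le\tfrac12$) directly to the polarization $f_\phi$, using the remark that polarization commutes with composition by a holomorphic $F$; both are legitimate because $\lVert\cdot\rVert_\sigma$ is a sup-norm on polarizations. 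Your bookkeeping actually yields the constant $4$ rather than $6$ (at the cost of a slightly more restrictive $\n_1$, since you insist $\lVert u_f\rVert_{\sigma'}\le\log 2$ rather than $\le1$), which is consistent with, and in fact sharper than, the stated bound; the concluding remark that "one lands at $6$" is unnecessary hedging, as $4<6$ already suffices.
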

\begin{rem}
We mention that the constant $\n_1$ is explicitly given below in \eqref{eq:n1}.
\end{rem}
\begin{proof}
In the proof, we keep in mind that if $F$ is holomorphic on 
$\D(0,\lVert f\rVert_{\sigma})$, 
then the polarization of $(F\circ f)_\phi$
equals $F\circ f_\phi$. We also recall that the spaces 
$\mathscr{H}_\sigma^\infty(\Gamma)$ are Banach algebras.

In view of the bound
\[
\big\lVert\s\, \Sop[f]/L\big\rVert_{\sigma'}\le 
|\s|\,\lVert 1/L\rVert_{\sigma'}\lVert \Sop\rVert_{\sigma\to \sigma'}
\lVert f\rVert_{\sigma}.
\]
and the fact that $\big|\log(1+t)\big|\le 2|t|$ holds for complex $t$ 
whenever $|t|\le \frac12$,
we find that for $f$ of norm at most $r_0$ in $\mathscr{H}^\infty_\sigma(\Gamma)$, 
we have that 
\begin{multline}
\big\lVert \Pop_\Omega\log(1+\s\,\Sop[f]/L)\big\rVert_{\sigma'}\le 
\lVert \Pop_\Omega \rVert_{\sigma'\to\sigma'}
\big\lVert \log(1+\s\,\Sop[f]/L)\big\rVert_{\sigma'}
\\
\le 2|\s|\;\lVert \Pop_\Omega \rVert_{\sigma'\to\sigma'} \lVert\Sop\rVert_{\sigma\to \sigma'}
\lVert 1/L\rVert_{\sigma'}
\lVert f\rVert_{\sigma}
\end{multline}
for all $\s$ with $|\s|$ sufficiently small,
and similarly for $g$. Here, \say{sufficiently small} means that
\begin{equation}
\label{eq:n0}
|\s| \le \frac{1}{\n_0},\quad \text{where }\;
\n_0 \eqd \max\big\{\epsilon_0^{-1},4 M_0(\sigma,\sigma',\s)
\sup_{|\s|\le\epsilon_0}\lVert 1/L\rVert_{\sigma'}r_0\big\},
\end{equation}
where $\epsilon_0=\epsilon_0(V,\phi)$ is the positive 
parameter from Proposition~\ref{prop:L-bd}
while $M_0(\sigma,\sigma',\s)$ is the constant from 
\eqref{eq:bd-Sop-simple}.
Since 
\[
|\exp(t)-\exp(t')|\le 3|t-t'|
\] 
whenever $t,t'$ are complex numbers
with $|t|\le 1$ and $|t'|\le 1$, 
we find that for all $f$ and $g$ in the ball of radius $r_0$
in $\mathscr{H}_\sigma^\infty(\Gamma)$
and for all $\s$ such that
\begin{equation}
\label{eq:n1}
|\s|\le \frac{1}{\n_1},\quad\text{where }\; \n_1\eqd \max\Big\{\n_0,
2\lVert \Pop_\Omega \rVert_{\sigma'\to\sigma'} M_0(\sigma,\sigma',\s)
\sup_{|\s|\le\epsilon_0}\lVert 1/L\rVert_{\sigma'}r_0\Big\},
\end{equation}
we have that
\begin{multline}
\lVert \Rop(f,g)\rVert_{\sigma'}\le 3 
\big\lVert \Pop_\Omega\log(1+\s\,L\Sop[f]/L)-
\Pop_\Omega\log(1+\s\,\Sop[g]/L)\big\rVert_{\sigma'}\\
\le 3\lVert\Pop_\Omega\rVert_{\sigma'\to\sigma'}
\big\lVert \log(1+\s\,\Sop[f]/L)-
\log(1+\s\,\Sop[g]/L)\big\rVert_{\sigma'}.
\end{multline}
Similarly, the logarithm meets the Lipschitz bound 
\[
\big|\log(1+t)-\log(1+t)\big|\le 2|t-t'|
\]
whenever $t,t'$ are complex with $|t|,|t'|\le \frac12$. 
As a consequence, we find that
\[
\lVert \Rop(f,g)\rVert_{\sigma'}\le 6 |\s|\;
\big\lVert \Pop_\Omega\big\rVert_{\sigma'\to\sigma'}
\big\lVert \Sop\big\rVert_{\sigma\to \sigma'}
\big\lVert 1/L\big\rVert_{\sigma'}\big\lVert f-g\big\rVert_{\sigma}.
\]
This holds for $f,g$ in the ball of radius $r_0$ in
$\mathscr{H}_\sigma^\infty(\Gamma)$ and for
$\s\in\D(0, \frac1{\n_1})$,
where $\n_1$ is given above in \eqref{eq:n1}.
The proof is complete.
\end{proof}
\begin{rem}\label{rem:circular}
We remark that the condition \eqref{eq:n1} looks like it could be 
circular, since the right-hand side
depends on $\s$. However, if we initially fix 
$|\s|\le\epsilon_0$, then the quantity $\n_1$ will 
be uniformly
bounded for fixed $\sigma$ and $\sigma'$. Hence the
condition \eqref{eq:n1} will be met provided that $|\s|$ is small enough.
\end{rem}

\subsection{The local Lipschitz estimate for \texorpdfstring{$\Tope$}{}}
We proceed to the central statement of this section. We let $\n_1$ 
be the lower bound from Proposition~\ref{prop:bd-R} and recall the number $r_0$
from \eqref{eq:r0}.

\begin{thm}\label{thm:Lipschitz}
Let $\sigma$ and $\sigma'$ be positive real numbers with
\[
0<\sigma'<\sigma<1.
\]
For any $\s\in\D\big(0,\n_1^{-1}\big)$ and for any two functions $f$ and $g$ in the
ball of radius $r_0$ in $\mathscr{H}^\infty_\sigma(\Gamma)$, we have that
\begin{equation}
\lVert \Tope(f)-\Tope(g)\rVert_{\sigma'}\le
C(\sigma,\sigma')\,M_0(\sigma,\sigma'',\s)\,|\s|\,\lVert f-g\rVert_{\sigma},
\end{equation}
where the constant $C(\sigma,\sigma')$ is given by
\begin{multline}
C(\sigma,\sigma')
=\frac{3\big\lVert |\partial V|^{-2}\big\rVert_{\sigma'}\,
\big\lVert(1-|\phi|^2)/V\big\rVert_{\sigma'}}{2(\sigma-\sigma')}
\\
\Big(1+6\lVert \Pop_\Omega\rVert_{\sigma''\to\sigma''}
\sup_{|\s|\le \epsilon_0}\lVert 1/L\rVert_{\sigma''}
\exp\big(\lVert\Pop_\Omega[\log L]\rVert_{\sigma}\big) \Big),
\end{multline}
where $\sigma''$ is the mean $\sigma''=\frac12(\sigma+\sigma')$, 
and where $M_0(\sigma,\sigma'',\s)$
is as in \eqref{eq:bd-Sop-simple}.
\end{thm}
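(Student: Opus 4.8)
The plan is to build directly on the identity \eqref{eq:Top-diff}, which expresses $\Tope(f)-\Tope(g)=\frac{1}{4\hatR|\partial\hatR|^2}\Phi$, where $\Phi$ is the bracketed expression assembled from $\s\,\Sop[f-g]$ and $\e^{\Pop_\Omega[\log L]}\Rop(f,g)$, with $\Rop$ as in \eqref{eq:discrepancy-f-g}. The idea is to estimate $\Phi$ in an \emph{intermediate} space $\mathscr{H}^\infty_{\sigma''}(\Gamma)$, $\sigma''=\tfrac12(\sigma+\sigma')$, and only afterwards to divide by $4\hatR|\partial\hatR|^2$ and descend to $\mathscr{H}^\infty_{\sigma'}(\Gamma)$. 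The genuinely nonlinear estimate has already been secured in Proposition~\ref{prop:bd-R}; the present theorem is, in effect, its repackaging with the $\hatR$-division appended.

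The structural observation I would record first is that $\Phi$ vanishes along $\Gamma$. Since the Poisson extension recovers boundary values, for $u\in\{f,g\}$ one has $\exp\big(\Pop_\Omega[\log(L+\s\,\Sop[u])]\big)\big|_\Gamma=(L+\s\,\Sop[u])\big|_\Gamma$ — here $L+\s\,\Sop[u]$ stays near the positive reals on $\Gamma$ once $|\s|\le\n_1^{-1}$, so the logarithm is unambiguous — hence each of the two brackets defining $\Tope(f)$ and $\Tope(g)$ vanishes on $\Gamma$, and therefore so does $\Phi$. This is exactly what legitimizes the division by $\hatR$ through Lemma~\ref{lem:div}, because $1-|\phi|^2$ vanishes on $\Gamma$ to the same (first) order and $(1-|\phi|^2)/\hatR$ is bounded in $\mathscr{H}^\infty_\sigma(\Gamma)$ by Proposition~\ref{prop:L-bd}.

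Next I would bound $\Phi$ in $\mathscr{H}^\infty_{\sigma''}(\Gamma)$. The linear term is immediate from \eqref{eq:bd-Sop-simple}: $\lVert\s\,\Sop[f-g]\rVert_{\sigma''}\le|\s|\,M_0(\sigma,\sigma'',\s)\lVert f-g\rVert_\sigma$. For the nonlinear term I would invoke Proposition~\ref{prop:bd-R} with the parameter $\sigma'$ there replaced by $\sigma''$ — permissible since $f,g$ lie in the ball of radius $r_0$ and $|\s|\le\n_1^{-1}$ — to get $\lVert\Rop(f,g)\rVert_{\sigma''}\le 6|\s|\,\lVert\Pop_\Omega\rVert_{\sigma''\to\sigma''}\lVert\Sop\rVert_{\sigma\to\sigma''}\lVert 1/L\rVert_{\sigma''}\lVert f-g\rVert_\sigma$. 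Combining these with the Banach-algebra property of $\mathscr{H}^\infty_{\sigma''}(\Gamma)$, the elementary bound $\lVert\e^{\Pop_\Omega[\log L]}\rVert_{\sigma''}\le\exp(\lVert\Pop_\Omega[\log L]\rVert_{\sigma''})\le\exp(\lVert\Pop_\Omega[\log L]\rVert_\sigma)$ (which uses that the polarization of $F\circ f$ is $F\circ f_\phi$ for entire $F$, together with monotonicity of $\lVert\cdot\rVert_\sigma$ in $\sigma$), and the uniform bounds $\lVert\Sop\rVert_{\sigma\to\sigma''}\le M_0(\sigma,\sigma'',\s)$ and $\lVert 1/L\rVert_{\sigma''}\le\sup_{|\s|\le\epsilon_0}\lVert 1/L\rVert_{\sigma''}$, one arrives at
\[
\lVert\Phi\rVert_{\sigma''}\le|\s|\,M_0(\sigma,\sigma'',\s)\Big(1+6\lVert\Pop_\Omega\rVert_{\sigma''\to\sigma''}\sup_{|\s|\le\epsilon_0}\lVert 1/L\rVert_{\sigma''}\,\e^{\lVert\Pop_\Omega[\log L]\rVert_\sigma}\Big)\lVert f-g\rVert_\sigma.
\]
Finally, writing $\frac{1}{4\hatR|\partial\hatR|^2}\Phi=\tfrac14|\partial\hatR|^{-2}\cdot\frac{1-|\phi|^2}{\hatR}\cdot\frac{\Phi}{1-|\phi|^2}$, applying Lemma~\ref{lem:div} to get $\big\lVert\tfrac{\Phi}{1-|\phi|^2}\big\rVert_{\sigma'}\le\frac{6}{\sigma''-\sigma'}\lVert\Phi\rVert_{\sigma''}=\frac{12}{\sigma-\sigma'}\lVert\Phi\rVert_{\sigma''}$, and then using the Banach-algebra property together with the uniform bounds on $\lVert|\partial V|^{-2}\rVert_{\sigma'}$ and $\lVert(1-|\phi|^2)/V\rVert_{\sigma'}$ from Proposition~\ref{prop:L-bd}, one obtains the asserted inequality, the constant $C(\sigma,\sigma')$ absorbing the explicit factors $\tfrac14$, $\tfrac{12}{\sigma-\sigma'}$ and the constant from Proposition~\ref{prop:bd-R}.

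The one point I expect to need care is the management of the scale of analyticity parameters: each of the operations — applying $\Sop$, forming $\Rop$ via Proposition~\ref{prop:bd-R}, and dividing by $\hatR$ via Lemma~\ref{lem:div} — forces a strict loss in the parameter, so the gap $\sigma-\sigma'$ must be apportioned (here the midpoint $\sigma''$ absorbs the loss from $\Sop$ and from $\Rop$, and the residual half-gap $\sigma''-\sigma'$ that from Lemma~\ref{lem:div}), while keeping $|\s|$ below the threshold $\n_1^{-1}$ of Proposition~\ref{prop:bd-R} so that that estimate is available. The substantive nonlinear content having already been established there, everything else is linear bookkeeping together with the vanishing-on-$\Gamma$ step that makes the division admissible.
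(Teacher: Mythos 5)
Your proposal is correct and follows essentially the same route as the paper: express $\Tope(f)-\Tope(g)$ via \eqref{eq:Top-diff}, note vanishing on $\Gamma$, bound $\lVert\Phi\rVert_{\sigma''}$ by combining $M_0(\sigma,\sigma'',\s)$ for the linear $\Sop[f-g]$ term with Proposition~\ref{prop:bd-R} (at $\sigma''$) and the exponential bound for the nonlinear term, then apply Lemma~\ref{lem:div} at the half-gap $\sigma''-\sigma'$ and pull out the remaining factors by the Banach-algebra property. Your bookkeeping is even slightly more careful than the paper's: your factor $\tfrac14\cdot\tfrac{12}{\sigma-\sigma'}=\tfrac{3}{\sigma-\sigma'}$ is the correct consequence of $\sigma''-\sigma'=\tfrac12(\sigma-\sigma')$, whereas the displayed $\tfrac{3}{2(\sigma-\sigma')}$ in the theorem is a harmless misprint; and you implicitly work with the correct sign in the second bracket of \eqref{eq:Top-diff} (which should be $-\exp(\cdots f)+\exp(\cdots g)$ for the boundary cancellation to hold), another sign slip in the paper that does not affect absolute-value estimates.
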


\begin{proof} 
Since $\mathscr{H}_{\sigma'}^\infty(\Gamma)$ is a Banach algebra, we have
\[
\lVert \Tope(f)-\Tope(g)\rVert_{\sigma'}\le \frac{1}{4}
\bigg\lVert\frac{1}{|\partial V|^2}\bigg\rVert_{\sigma'}\,
\bigg\lVert \frac{1-|\phi|^2}{V}\bigg\rVert_{\sigma'}
\bigg\lVert \frac{\s\,\Sop(f-g) + \e^{\Pop_\Omega[\log L]}\Rop(f,g)}{1-|\phi|^2}\bigg\rVert_{\sigma'},
\]
where have used the representation \eqref{eq:Top-diff} of $\Tope(f)-\Tope(g)$,
and where we recall that $\Rop$ was defined in \eqref{eq:discrepancy-f-g}.
If we introduce an intermediate parameter $\sigma''$ by 
\[
\sigma''=\frac12(\sigma+\sigma'),
\]
it follows from Lemma~\ref{lem:div} that
\begin{equation}
\lVert \Tope(f)-\Tope(g)\rVert_{\sigma'}
\le \frac{3}{2(\sigma''-\sigma')}
\bigg\lVert\frac{1}{|\partial V|^2}\bigg\rVert_{\sigma'}\,
\bigg\lVert \frac{1-|\phi|^2}{V}\bigg\rVert_{\sigma'}
\big\lVert \s\,\Sop(f-g)+\e^{\Pop_\Omega[\log L]}\Rop(f,g)\big\rVert_{\sigma''}.
\end{equation}
We conclude the proof by observing that in view of the triangle inequality, we have
\[
\big\lVert \s\,\Sop(f-g)+\e^{\Pop_\Omega[\log L]}\Rop(f,g)\big\rVert_{\sigma''}\le 
|\s|\;\lVert\Sop\rVert_{\sigma\to \sigma''}\lVert f-g\rVert_{\sigma}
+\e^{\lVert \Pop_\Omega[\log L]\rVert_{\sigma}}\lVert \Rop(f,g)\rVert_{\sigma''},
\]
so if we apply Proposition~\ref{prop:bd-R} to the 
last term on the right-hand side,
it follows that
\[
\big\lVert \Tope(f)-\Tope(g)\big\rVert_{\sigma'}\le C(\sigma,\sigma')\,|\s|\;
\lVert \Sop\rVert_{\sigma\to \sigma''}\lVert f-g\rVert_{\sigma}
\]
where 
\begin{multline}
C(\sigma,\sigma')=
\frac{3\big\lVert |\partial V|^{-2}\big\rVert_{\sigma'}\,
\big\lVert (1-|\phi|^2)/V\big\rVert_{\sigma'}
}{2(\sigma''-\sigma')}\\
\times \Big(1+ 6\lVert \Pop_\Omega\rVert_{\sigma''\to\sigma''}
\sup_{|\s|\le\epsilon_0}\lVert 1/L\rVert_{\sigma''}\exp\big(\lVert \Pop_\Omega[\log L]\rVert_{\sigma}\big)\Big)
\end{multline}
provided that $f$ and $g$ have norm at most $r_0$ in $\mathscr{H}_\sigma^\infty(\Gamma)$ and that
$|\s|\le \frac{1}{\n_1}$.
This proves the claim.
\end{proof}

Combining Theorem~\ref{thm:Lipschitz} with Proposition~\ref{prop:S-P-bd} above, 
we obtain the following immediate corollary which illustrates better what is going on.
For the formulation, we recall the positive number $\n_1$ defined above in \eqref{eq:n1}.

\begin{cor}
\label{cor:bd-T-Lip}
There exists a real number $\sigma^*$ with 
$0<\sigma^*<1$ such that whenever $0<\sigma'<\sigma<\sigma^*$, 
the Lipschitz bound 
\[
\big\lVert \Tope f - \Tope g\big\rVert_{\sigma'}\le 
M_1(\sigma,\sigma',\s)\,|\s|\;\lVert f-g\rVert_{\sigma}
\]
holds for $\s$ with $|\s|\le\frac{1}{\n_1}$ and $f,g$ in 
the ball of radius $r_0$ in $\mathscr{H}^\infty_{\sigma}(\Gamma)$,
where the constant $M_1(\sigma,\sigma',\s)$ takes the form
\[
M_1(\sigma,\sigma',\s)=C_1\Big(\frac{1}{(\sigma')^2(\sigma-\sigma')^2}
+\frac{|\s|}{(\sigma')^2(\sigma-\sigma')^4}\Big),
\]
for some positive constant $C_1$ depending only on $V$ and $\Omega$.
\end{cor}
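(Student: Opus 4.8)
The plan is to combine Theorem~\ref{thm:Lipschitz} with the bound \eqref{eq:bd-Sop-simple} from Proposition~\ref{prop:S-P-bd}, keeping the explicit intermediate parameter $\sigma''=\tfrac12(\sigma+\sigma')$ that already appears in Theorem~\ref{thm:Lipschitz}, and then absorbing every genuinely $\sigma$-dependent factor into constants via Proposition~\ref{prop:L-bd} and the Poisson bound \eqref{eq:sigma-bound-P}. Theorem~\ref{thm:Lipschitz} already supplies, for $f,g$ in the ball of radius $r_0$ in $\mathscr{H}^\infty_\sigma(\Gamma)$ and $\s\in\D(0,\n_1^{-1})$,
\[
\lVert \Tope(f)-\Tope(g)\rVert_{\sigma'} \le C(\sigma,\sigma')\,M_0(\sigma,\sigma'',\s)\,|\s|\,\lVert f-g\rVert_{\sigma},
\]
so it only remains to show that $C(\sigma,\sigma')\,M_0(\sigma,\sigma'',\s)$ is dominated by $C_1\big((\sigma')^{-2}(\sigma-\sigma')^{-2}+|\s|(\sigma')^{-2}(\sigma-\sigma')^{-4}\big)$; the restriction $|\s|\le 1/\n_1$ and the ball hypothesis are then inherited verbatim.

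First I would fix $\sigma^*\in(0,1)$ small enough that both Proposition~\ref{prop:L-bd} and the smallness requirement of Proposition~\ref{prop:S-P-bd} hold for all $0<\sigma'<\sigma\le\sigma^*$. In the explicit formula for $C(\sigma,\sigma')$ recorded in Theorem~\ref{thm:Lipschitz}, Proposition~\ref{prop:L-bd} bounds $\lVert|\partial V|^{-2}\rVert_{\sigma'}$, $\lVert(1-|\phi|^2)/V\rVert_{\sigma'}$ and $\sup_{|\s|\le\epsilon_0}\lVert 1/L\rVert_{\sigma''}$ by constants depending only on $V$ and $\phi$; the Poisson estimate \eqref{eq:sigma-bound-P} gives $\lVert\Pop_\Omega\rVert_{\sigma''\to\sigma''}\le 6/\sigma''\le 12/\sigma'$ since $\sigma''>\sigma'$; and the factor $\exp(\lVert\Pop_\Omega[\log L]\rVert_\sigma)$ is uniformly bounded for $0<\sigma\le\sigma^*$ and $|\s|\le\epsilon_0$. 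Here I would \emph{not} invoke the operator-norm bound for $\Pop_\Omega$ (which would introduce an unwanted $1/\sigma$), but argue instead, exactly as in the proof of Proposition~\ref{prop:L-bd}, that $\Pop_\Omega[\log L]$ is a fixed real-analytic function perturbed by an $\Ordo(\s)$ term whose polarization is bounded on $\hat{\A}(\sigma^*)$, so that monotonicity of $\lVert\cdot\rVert_\sigma$ in $\sigma$ (nested domains $\hat{\A}(\sigma)\subseteq\hat{\A}(\sigma^*)$) gives $\lVert\Pop_\Omega[\log L]\rVert_\sigma\le\lVert\Pop_\Omega[\log L]\rVert_{\sigma^*}$. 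Collecting these bounds yields $C(\sigma,\sigma')\le C_2\big(\sigma'(\sigma-\sigma')\big)^{-1}$ with $C_2=C_2(V,\phi)$.

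Next I would unwind $M_0(\sigma,\sigma'',\s)$ from \eqref{eq:bd-Sop-simple}: since $\sigma-\sigma''=\tfrac12(\sigma-\sigma')$ and $\sigma''>\sigma'$,
\[
M_0(\sigma,\sigma'',\s)=C_0\Big(\frac{1}{(\sigma-\sigma'')\sigma''}+\frac{|\s|}{(\sigma-\sigma'')^3\sigma''}\Big)\le C_0\Big(\frac{2}{(\sigma-\sigma')\sigma'}+\frac{8|\s|}{(\sigma-\sigma')^3\sigma'}\Big).
\]
Multiplying the two bounds and using $\sigma'<1$ to subsume the lower-order term gives $C(\sigma,\sigma')M_0(\sigma,\sigma'',\s)\le C_1\big((\sigma')^{-2}(\sigma-\sigma')^{-2}+|\s|(\sigma')^{-2}(\sigma-\sigma')^{-4}\big)$ for a constant $C_1=C_1(V,\Omega)$, which is exactly $M_1(\sigma,\sigma',\s)$. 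The main obstacle is purely organizational: tracking the powers of $\sigma-\sigma'$ and of $\sigma'$ when $\sigma''$ is substituted — in particular checking that no stray negative power of $\sigma$, rather than $\sigma'$, survives (it does not, since $\sigma>\sigma'$) — together with the one place above where $\exp(\lVert\Pop_\Omega[\log L]\rVert_\sigma)$ must be controlled through the nested-domain monotonicity argument rather than the crude operator-norm bound. There is no analytic content beyond what is already contained in Theorem~\ref{thm:Lipschitz} and Propositions~\ref{prop:S-P-bd}--\ref{prop:L-bd}, so the statement follows.
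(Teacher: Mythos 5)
Your proposal is correct and follows the same route as the paper: plug the bound \eqref{eq:bd-Sop-simple} for $M_0$ into Theorem~\ref{thm:Lipschitz}, then use Proposition~\ref{prop:L-bd}, the Poisson bound \eqref{eq:sigma-bound-P}, and the monotonicity of $\lVert\cdot\rVert_\sigma$ in $\sigma$ to check that the $V$-, $\phi$- and $L$-dependent quantities do not degenerate as $\sigma',\sigma\to0$, after which the bookkeeping with $\sigma''=\tfrac12(\sigma+\sigma')$ yields exactly $M_1$. Your explicit handling of $\exp(\lVert\Pop_\Omega[\log L]\rVert_\sigma)$ by nested-domain monotonicity (rather than the operator-norm bound, which would bring in an unwanted $1/\sigma$ inside the exponential) is a sound precision of a point the paper leaves implicit.
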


\begin{proof}
If we plug in the definition of the constant $M_0(\sigma,\sigma'',\s)$
from \eqref{eq:bd-Sop-simple} into the estimate in Theorem~\ref{thm:Lipschitz}, 
we find that it only remains to verify that the quantities
\[
\big\lVert L^{-1}\big\rVert_{\sigma}, \qquad \big\lVert (\partial V)^{-2}\big\rVert_{\sigma}
\quad \text{and}\quad \big\lVert V^{-1}(1-|\phi|^2)\big\rVert_{\sigma},
\]
do not degenerate as $\sigma$ varies in the allowed parameter range.
But this follows from Proposition~\ref{prop:L-bd}.
\end{proof}

\subsection{Approximate solutions to the fixed point equation}
We recall the operator $\Tope$, defined above in \eqref{eq:Top-def}.
In this section, we show that iteration of the non-linear
operator $\Tope$ gives us an approximate fixed point,
after a suitable $\s$-dependent number of steps.

For the formulation, we let
\[
C_2=2\max\Big\{1,C_1\Big(\frac{16}{(\sigma^*)^4}+\frac{64}{(\sigma^*)^6}\Big)\Big\},
\]
where $C_1$ is the constant from Corollary~\ref{cor:bd-T-Lip}.
\begin{thm}
\label{thm:main-iteration}
There exists a positive constant $\varrho_0=\varrho(\hatR,\sigma^*)$, 
such that for all $\s$ in the disk $\D(0,\varrho_0)$
and for any positive integer $k$ with $k\le |\s|^{-1/2}C_2^{-1/2}$, we have
\[
\big\lVert \E_k-\Tope[\E_k]\big\rVert_{\frac12\sigma^*}
\le \big(\tfrac12C_2|\s|\,k^2\big)^k\lVert \E_{0}\rVert_{\sigma^*}.
\]
\end{thm}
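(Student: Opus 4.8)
The plan is to set up a "telescoping" estimate for the successive differences $\E_k - \E_{k-1} = \Tope^{k+1}[0] - \Tope^{k}[0]$ along a shrinking chain of parameters $\sigma_j$ interpolating between $\sigma^*$ and $\tfrac12\sigma^*$, and to combine the Lipschitz bound of Corollary~\ref{cor:bd-T-Lip} applied at each step. First I would fix the chain: for a given target number $k$ of iterates, choose $\sigma_j = \sigma^* - \frac{j}{2k}\sigma^*$ for $j=0,1,\dots,k$, so that $\sigma_0 = \sigma^*$, $\sigma_k = \tfrac12\sigma^*$, and each gap satisfies $\sigma_{j-1}-\sigma_j = \frac{\sigma^*}{2k}$. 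I would also need to verify along the way that every iterate $\E_j = \Tope^{j+1}[0]$ stays in the ball of radius $r_0$ in the relevant space $\mathscr{H}^\infty_{\sigma_j}(\Gamma)$, so that Corollary~\ref{cor:bd-T-Lip} is applicable; this is where the restriction $k \le |\s|^{-1/2}C_2^{-1/2}$ and the smallness of $\varrho_0$ will be used, since the contraction factor will be shown to be $<1$ and the iterates form a convergent-type chain starting from $\|\E_0\|_{\sigma^*} \le r_0/2$.

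The core of the argument is the recursive inequality. Writing $d_j := \|\E_j - \E_{j-1}\|_{\sigma_j}$, I would apply Corollary~\ref{cor:bd-T-Lip} with $\sigma = \sigma_{j-1}$ and $\sigma' = \sigma_j$ to obtain
\[
d_j = \big\|\Tope[\E_{j-1}] - \Tope[\E_{j-2}]\big\|_{\sigma_j}
\le M_1(\sigma_{j-1},\sigma_j,\s)\,|\s|\,\|\E_{j-1}-\E_{j-2}\|_{\sigma_{j-1}}.
\]
Since $\sigma_{j-1}-\sigma_j = \frac{\sigma^*}{2k}$ and $\sigma_j \ge \tfrac12\sigma^*$, the form of $M_1$ gives
\[
M_1(\sigma_{j-1},\sigma_j,\s)\,|\s| \le C_1\Big(\frac{(2k)^2}{(\tfrac12\sigma^*)^2(\sigma^*)^2} + \frac{|\s|(2k)^4}{(\tfrac12\sigma^*)^2(\sigma^*)^4}\Big)|\s| \le \tfrac12 C_2\,|\s|\,k^2,
\]
where the bound $|\s|k^2 \le C_2^{-1} \le 1$ is used to absorb the quartic-in-$k$ term into the quadratic one, and the definition of $C_2$ absorbs the remaining constants. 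Thus $d_j \le \big(\tfrac12 C_2 |\s| k^2\big) d_{j-1}$, and iterating from $d_0 = \|\E_0 - \E_{-1}\|_{\sigma^*} = \|\E_0\|_{\sigma^*}$ (interpreting $\E_{-1}=0$) yields $d_k \le \big(\tfrac12 C_2 |\s| k^2\big)^k \|\E_0\|_{\sigma^*}$. Since $\E_k - \Tope[\E_k] = \E_k - \E_{k+1} = -(\E_{k+1}-\E_k)$, and $\|\E_{k+1}-\E_k\|_{\tfrac12\sigma^*} \le \|\E_{k+1}-\E_k\|_{\sigma_k}$ with one more Lipschitz step of the same size, this is exactly the claimed bound (after renaming indices so the chain has one extra link, or equivalently noting that the final difference $\E_k - \Tope[\E_k]$ already equals $d_{k}$ in the telescoped chain of length $k+1$).

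The main obstacle I anticipate is the bookkeeping needed to guarantee that all iterates remain in the fixed ball of radius $r_0$ so that the Lipschitz estimate stays valid throughout — this requires summing the geometric-type series $\sum_j d_j$ and checking it stays below $r_0/2$, which in turn forces the choice of $\varrho_0$ and the constraint $k \le |\s|^{-1/2}C_2^{-1/2}$ (so that $\tfrac12 C_2|\s|k^2 \le \tfrac12 < 1$, making the series converge with a ratio bounded away from $1$). A secondary technical point is handling the parameter-chain carefully: the Lipschitz constant $M_1$ degenerates as the gaps $\sigma_{j-1}-\sigma_j$ shrink, so one must verify that with $k$ gaps each of size $\frac{\sigma^*}{2k}$ the accumulated loss is precisely the factor $k^2$ per step (not worse), which is why the linear interpolation is chosen rather than, say, a dyadic one. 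Once these two points are in place, the estimate is a routine induction.
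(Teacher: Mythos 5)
Your proposal is correct and follows essentially the same route as the paper: the same linearly spaced parameter chain $\sigma_j = \sigma^*(1 - j/(2k))$, the same use of Corollary~\ref{cor:bd-T-Lip} at each step with the factor $|\s|\,M_1 \le \tfrac12 C_2|\s|k^2$ obtained by bounding $\sigma_j - \sigma_{j+1} = \sigma^*/(2k)$ from below and $\sigma_j \ge \tfrac12\sigma^*$, the same inductive verification via the geometric series that each $\E_j$ remains in the ball of radius $r_0$, and the same derivation of $\varrho_0$ from the $\s$-uniform lower bound needed to invoke the Lipschitz estimate. The only cosmetic difference is that the paper telescopes directly on $\|\E_j - \Tope[\E_j]\|_{\sigma_j}$ rather than on $d_j = \|\E_j - \E_{j-1}\|$, which avoids the index shift you note at the end, but these are the same bound since $\E_j - \Tope[\E_j] = -(\E_{j+1}-\E_j)$.
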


In particular, by taking the diagonal restriction in the relevant 
bi-annulus and choosing $\mathcal{N}=\phi^{-1}(\A(\frac13\sigma^*))$
and letting, e.g., $k=\frac12|\s|^{-\frac12}C_2^{-\frac12}$, we obtain 
Lemma~\ref{lem:approx-fix-pt-prel} as a corollary. The factors $\frac13$ 
and $\frac12$ are not optimal, but are rather taken to have extra room for later.

\begin{proof}
Recall the definition \eqref{eq:r0} of the radius $r_0$:
\[
r_0=2\sup_{\s\in\D(0,\epsilon_0)}\lVert \E_0\rVert_{\sigma^*}.
\]
The Lipschitz estimates for $\Tope$ from \S\ref{s:est-Top} are then valid for all $f,g$
in the ball of radius $r_0$ in $\mathscr{H}^\infty_{\sigma}(\Gamma)$, provided that $\sigma\le\sigma^*$.
For a parameter $k\in\N$ which may depend on $|\s|$, we 
let $\sigma_{j}=\sigma_{j,k}$ be the numbers
\[
\sigma_j=\sigma^*-\frac{\sigma^* j}{2k},\qquad j=1,\ldots, k.
\]
Then $\sigma_j$ decreases with $j$, we have $\sigma_j-\sigma_{j+1}=\sigma^*/(2k)$,
while $\sigma_{k}=\sigma_{k,k}=\frac{1}{2}\sigma^*$ remains bounded away from 0.

In order for the estimates
from the previous section to apply, we need to require that 
\begin{equation}\label{eq:n-large-enough}
|\s|^{-1}\ge \n_1=
M_0(\sigma_j, \sigma_{j+1},\s)
\;\lVert 1/L\rVert_{\sigma_{j+1}}\max\Big\{2,
\lVert \Pop_\Omega \rVert_{\sigma_{j+1}\to\sigma_{j+1}}\Big\}r_0
\end{equation}
where we point out that the right-hand is expected to depend on $\s$
as a consequence of the choice of the parameters $\sigma_j$ and the $\s$-dependence
of $M_0(\sigma,\sigma',\s)$.
But in view of the exact formula for $M_0(\sigma,\sigma',\s)$ and the bounds given in
Proposition~\ref{prop:L-bd} and \eqref{eq:sigma-bound-P},
there exists some constant $C_3$
depending only on $\sigma^*$, $V$ and $\phi$, such that the quantity $\n_1$ given 
in \eqref{eq:n-large-enough} satisfies
\[
\n_1\le C_3\left(k +k^3|\s|\right).
\]
Hence, if we require that
$k\le |\s|^{-\frac12}C_2^{-\frac12}$, we obtain
\[
\n_1\le C_3(k + k^3|\s|)\le C_3 k(1+C_2^{-1}) \le \frac{C_3}{\sqrt{C_2}}(1+C_2^{-1})|\s|^{-\frac12}.
\]
It follows that if $\frac{C_3}{\sqrt{C_2}}(1+C_2^{-1})\le |\s|^{-\frac12}$, i.e., if
\[
|\s|\le \frac{C_2}{C_3^2}\frac{1}{(1+C_2^{-1})^2}=:\varrho_0,
\]
the inequality \eqref{eq:n-large-enough} holds.
In other words, the upper bound on $|\s|$, which is required for the 
estimates from the previous section to apply, is now
replaced by a $\s$-independent upper bound.

We let $k$ be a parameter with $k\le |\s|^{-\frac12}C_2^{-\frac12}$ and let 
$|\s|\le \varrho_0$.
Then, for any parameter $j=1,\ldots k$, we apply Corollary~\ref{cor:bd-T-Lip} 
to obtain the estimate
\[
\big\lVert \E_j-\Tope[\E_j]\big\rVert_{\sigma_j}
=\big\lVert \Tope[\E_{j-1}]-\Tope[\E_j]\big\rVert_{\sigma_j}
\le M_1(\sigma_j,\sigma_{j+1},\s)|\s|
\big\lVert \E_{j-1}-\E_j\big\rVert_{\sigma_{j+1}},
\]
where $M_1(\sigma,\sigma',\s)$ was defined in Corollary~\ref{cor:bd-T-Lip}.
Since $C_2\ge 2$ we have in particular the crude 
bound $k\le |\s|^{-\frac12}$, and hence we may 
combine the identity $\sigma_{j+1}-\sigma_j=\sigma^*/(2k)$ with the bound $\sigma_j\ge \frac12\sigma^*$
to get that
\begin{align}
M_1(\sigma_j,\sigma_{j+1},\s)&
=C_1\Big(\frac{1}{(\sigma_{j+1})^2(\sigma_j-\sigma_{j+1})^2}+
\frac{|\s|}{(\sigma_{j+1})^2(\sigma_j-\sigma_{j+1})^4}\Big)\\
&\le C_1\Big(\frac{16k^2}{(\sigma^*)^4}+
\frac{64|\s|k^4}{(\sigma^*)^6}\Big)\le \frac12 C_2k^2
\end{align}
Hence, we find that
\begin{equation}
\label{eq:iterative-step}
\big\lVert \E_j-\Tope[\E_j]\big\rVert_{\sigma_j}
\le \tfrac12 C_2|\s|\,k^2
\big\lVert \E_{j-1}-\Tope[\E_{j-1}]\big\rVert_{\sigma_{j-1}},
\end{equation}
provided that the norm on the right-hand side is at most $r_0$.

It remains to verify that the relevant norms of $\E_j$
remain bounded by $r_0$ throughout the iteration, and finally to apply the 
estimate \eqref{eq:iterative-step}
iteratively to estimate $\lVert \E_k\rVert_{\sigma_k}$
in terms of $\lVert \E_0\rVert_{\sigma_0}$.

\subsubsection*{The norms of $\E_j$}
We recall that $\E_0=\Tope[0]$ and that the radius $r_0$
was defined as $r_0=2\sup_{|\s|\le\epsilon_0}\lVert \E_0\rVert_{\sigma^*}$. 
If we write $\E_{-1}\equiv 0$,
we find that the estimate \eqref{eq:iterative-step} holds for $j=0$.
This forms the base step for an induction.

Suppose next that $\E_\ell$ lies in the ball of 
radius $r_0$ in $\mathscr{H}^\infty_{\sigma_{l}}$
for all $0\le \ell\le j-1$.
We may then estimate the norm of $\E_j$ from above by
\[
\lVert \E_j\rVert_{\sigma_{j}}\le \sum_{\ell=0}^j\lVert \E_\ell-
\E_{\ell-1}\rVert_{\sigma_\ell}=\sum_{\ell=0}^j\lVert \Tope \E_{\ell-1}-
\Tope \E_{\ell-2}\rVert_{\sigma_\ell}.
\]
If $j> |\s|^{-\frac12}C_2^{-\frac12}$ we are done, and else it follows that 
\begin{equation}
\lVert \E_j\rVert_{\sigma_{j}}\le \sum_{\ell=0}^{j}(\tfrac12 C_2 k^2 |\s|)^\ell
\lVert \E_0\rVert_{\sigma_0}\le \sum_{\ell=0}^{j}\frac1{2^\ell} 
\lVert \E_0\rVert_{\sigma_0}\le r_0.
\end{equation}
By induction, obtain the bound
\begin{equation}
\label{eq:Ek-bd}
\max_{0\le j\le k}\lVert \E_j\rVert_{\sigma_{j}}\le r_0
\end{equation}
for the full sequence of iterates.

\subsubsection*{The iteration of the estimate \eqref{eq:iterative-step}}\;
As a consequence of \eqref{eq:Ek-bd}, the estimate 
\eqref{eq:iterative-step} is in force for $j=0,1,2,\ldots,k$. Hence,
we may apply it iteratively, which yields
\begin{multline}
\label{eq:iteration-est}
\lVert \E_k-\Tope[\E_k]\rVert_{\frac12\sigma^*}=
\lVert \E_k-\Tope[\E_k]\rVert_{\sigma_k}\\
\le \tfrac12 C_2|\s| k^2\,\lVert \E_{k-1}-\Tope[\E_{k-1}\rVert_{\sigma_{k-1}}
\le\ldots\le \big(\tfrac12 C_2|\s|\,k^2\big)^k\lVert \E_0\rVert_{\sigma^*},
\end{multline}
and the proof is complete.
\end{proof}

\subsection{The auxiliary coefficient functions}
We let $\E_k=\Tope^{k+1}[0]$ be the
function from Theorem~\ref{thm:main-iteration}, and 
define $h_{k}$
in accordance with \eqref{eq:h-sol}, so that
\begin{equation}\label{eq:def-hkn}
h_{k}\eqd \frac12\Pop_\Omega\big[\log(L+\s \Sop [\E_{k}])\big]
-\frac{1}{4}\log\frac{\pi}{2}.
\end{equation}
We also let $H_{k}$ and $G_{k}$ be given by 
\[
H_{k}\eqd \log|\phi|^2+\s\Pop_{\Omega}[\E_{k}].
\]
and
\[
G_{k}\eqd \frac{\s(\Pop_\Omega-\Iop)[\E_{k}]+\log|\phi|^2}{2\hatR}
\]
respectively (cf.\ \eqref{eq:H-sol} and \eqref{eq:G-sol}),
where we point out that the division does not create any singularity, 
so that the functions $h_{k}$, $H_{k}$ and $G_{k}$ 
are all well-defined and real-analytically smooth on the 
neighborhood $\mathcal{N}$ of $\Gamma$, given by
\[
\mathcal{N}=\phi^{-1}\big(\mathbb{A}\big(\rho(\tfrac13\sigma^*)\big)\big),
\]
In fact, we can quantify this further.
\begin{prop}
\label{prop:bdd-coeff}
There exists a constant $C_4=C_4(\hatR,\phi,\sigma^*)$ such that for all $\s\in\D(0,\varrho_0)$ 
and positive integers $k$ with $k\le |\s|^{-\frac12}C_2^{-\frac12}$,
the functions $h_{k}$, $H_{k}$ and $G_{k}$
along with their gradients are bounded in modulus by $C_4$ on $\mathcal{N}$.
The same bound holds also for the harmonic conjugate $h^*_k$ of $h_k$ in $\Omega$.
\end{prop}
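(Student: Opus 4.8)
The plan is to control the three quantities $\E_k$, $h_k$, $H_k$, $G_k$ (and $h_k^*$) by combining the uniform bound on $\E_k$ obtained during the iteration with the boundedness estimates for the linear operators $\Pop_\Omega$, $\Sop$, and with the basic bounds collected in Proposition~\ref{prop:L-bd}. The key point is that everything will be controlled not directly on $\mathcal N$ but in the Banach-algebra norms $\lVert\cdot\rVert_\sigma$ at a fixed scale $\sigma=\tfrac13\sigma^*$, and then one passes from the $\mathscr H^\infty_\sigma(\Gamma)$-norm (which bounds the Hermitian polarization on the bi-annulus $\hat{\A}(\sigma)$) to the sup-norm and the $C^1$-norm of the underlying function on the diagonal neighborhood $\mathcal N=\phi^{-1}(\A(\rho(\tfrac13\sigma^*)))$ by a standard Cauchy-estimate argument on the complexified annulus (this loss of a scale is exactly why $\mathcal N$ is taken at $\tfrac13\sigma^*$ while the iteration in Theorem~\ref{thm:main-iteration} was run down to $\tfrac12\sigma^*$; the factor $\tfrac13$ leaves room for the derivative estimate).

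First I would fix $\sigma_0=\tfrac12\sigma^*$, so that by \eqref{eq:Ek-bd} in the proof of Theorem~\ref{thm:main-iteration} we have $\lVert\E_k\rVert_{\sigma_0}\le r_0$ for all admissible $k$ and all $\s\in\D(0,\varrho_0)$. Next, using \eqref{eq:def-hkn}, the sub-multiplicativity of the norms, the operator bound $\lVert\Pop_\Omega\rVert_{\sigma\to\sigma}\le 6/\sigma$ from \eqref{eq:sigma-bound-P}, the bound $\lVert\Sop\rVert_{\sigma_0\to\sigma_1}\le M_0(\sigma_0,\sigma_1,\s)$ from \eqref{eq:bd-Sop-simple} (with $\sigma_1$ a fixed intermediate scale, say $\sigma_1=\tfrac{5}{12}\sigma^*$, whence $M_0$ is uniformly bounded for $|\s|\le\varrho_0$), and the boundedness of $\lVert 1/L\rVert_{\sigma_1}$ from Proposition~\ref{prop:L-bd}, one sees that $\log(L+\s\Sop[\E_k])$ has $\lVert\cdot\rVert_{\sigma_1}$-norm bounded uniformly in $k$ and $\s$ — here one invokes $L+\s\Sop[\E_k]>0$ on $\Gamma$ together with the fact that $\log$ is holomorphic and Lipschitz away from zero, so that $\log$ applied to an element of the Banach algebra with norm-distance to a positive constant bounded below still lies in the algebra with controlled norm. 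Applying $\Pop_\Omega$ and the constant shift then gives $\lVert h_k\rVert_{\sigma_1}\le C$ uniformly. The bounds $\lVert H_k\rVert_{\sigma}\le C$ and $\lVert G_k\rVert_{\sigma}\le C$ follow the same way from their defining formulas, using Lemma~\ref{lem:div} (or rather the smoothness-of-the-quotient remark: the numerator of $G_k$ vanishes on $\Gamma$, so division by $\hatR$ costs one scale via Lemma~\ref{lem:div} with the factor $(1-|\phi|^2)/\hatR$ absorbed using Proposition~\ref{prop:L-bd}) and $\lVert 1/|\partial V|^2\rVert_\sigma\le C$. For the harmonic conjugate $h_k^*$, I would note that $h_k$, being harmonic and bounded with controlled $C^1$-norm near $\Gamma$, has a harmonic conjugate in $\Omega$ whose $C^1$-norm on $\mathcal N$ is controlled by the same Cauchy-Riemann/Cauchy-estimate mechanism; alternatively one realizes $h_k^*$ from the boundary trace of the analytic completion of $h_k$ and applies the same Poisson-type bounds (as in \cite{HedenmalmPotential}).

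Finally, to convert these $\lVert\cdot\rVert_{\sigma}$-bounds into the asserted pointwise bound by $C_4$ on $\mathcal N$ for the functions and their gradients, I would use that membership in $\mathscr H^\infty_\sigma(\Gamma)$ with norm at most $A$ means the Hermitian polarization is bounded by $A$ on $\hat\A(\sigma)$, hence by the Cauchy integral formula in one of the two complex variables, restricted to the diagonal $\phi^{-1}(\A(\rho(\tfrac13\sigma^*)))$ which sits compactly inside the domain of analyticity, both the function and its first derivatives are bounded by a constant times $A$ depending only on the ratio of the two scales — i.e.\ only on $\sigma^*$. Taking $C_4$ to be the maximum of the resulting constants over the finitely many quantities $h_k,H_k,G_k,h_k^*$ completes the proof. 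The main obstacle I anticipate is keeping the bound on $\lVert\log(L+\s\Sop[\E_k])\rVert_{\sigma_1}$ uniform in $k$: one must be careful that although $\Sop[\E_k]$ is small only after multiplication by $\s$, the lower bound $L+\s\Sop[\E_k]>0$ on $\Gamma$ needs a quantitative lower bound on $\coeffL_0=L|_{\s=0}$ on $\Gamma$ (available from Proposition~\ref{prop:L-bd}) together with $|\s|\,\lVert\Sop[\E_k]\rVert\le$ small, which is exactly the smallness condition already enforced by $|\s|\le\varrho_0$ and $k\le|\s|^{-1/2}C_2^{-1/2}$; once this is in hand the rest is routine Banach-algebra bookkeeping.
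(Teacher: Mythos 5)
Your proposal is correct and follows essentially the same route as the paper's proof: use the uniform bound $\lVert\E_k\rVert_{\frac12\sigma^*}\le r_0$ from the iteration, control $\lVert\log(L+\s\Sop[\E_k])\rVert_{\sigma'}$ at an intermediate scale via the boundedness of $1/L$, $\Pop_\Omega$ and $\Sop$ together with the smallness of $\s L^{-1}\Sop[\E_k]$ (exactly the factored form $\log L+\log(1+\s L^{-1}\Sop[\E_k])$ and the Lipschitz bound on $\log(1+t)$ that the paper uses), then pass to gradient control on $\mathcal{N}$ by losing one more scale. The only cosmetic difference is that the paper estimates $\partial h_k$ directly in the $\mathscr{H}^\infty_{\frac13\sigma^*}$-norm via the $\bar\partial$-bound \eqref{eq:d-bar-bd} rather than invoking a separate Cauchy-estimate step, but these amount to the same computation.
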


In particular, this implies the preliminary claim from Lemma~\ref{lem:bdd-coeff-prel}.

\begin{proof}
This proof relies on the observation that $\E_{k}$ lies in the ball 
of radius $r_0$ in $\mathscr{H}^\infty_{\frac12\sigma^*}(\Gamma)$ (see \eqref{eq:Ek-bd}), together with
Lemma~\ref{lem:div}, Proposition~\ref{prop:S-P-bd} and the equation \eqref{eq:d-bar-bd}.
We illustrate how to bound the complex gradient 
$\partial h_{k}$ on $\mathcal{N}$, and omit the 
verification of the other claims 
since they are very similar. We recall that
\[
h_{k}=\frac12\Pop_\Omega\big[\log(\s\,\Sop [\E_{k}]+L)\big]
-\frac{1}{4}\log\frac{\pi}{2}.
\]
Denote by $\sigma'$ the mid point between $\frac13\sigma^*$ and $\frac12\sigma^*$.
It follows from \eqref{eq:d-bar-bd} and \eqref{eq:sigma-bound-P} that 
\begin{multline}
\label{eq:est-d-bar-h}
\lVert \partial h_{k}\rVert_{\frac13\sigma^*}\le \frac{18}{3\sigma'-\sigma^*}\,
\lVert h_{k}\rVert_{\sigma'}
\lVert \phi'\rVert_{\frac13\sigma^*}+\frac{1}{4}\log\frac{\pi}{2}\\
\le \frac{54}{(3\sigma'-\sigma^*)\sigma'}
\big\lVert \log(\s\,\Sop [\E_{k}]+L)\big\rVert_{\sigma'}
\lVert \phi'\rVert_{\frac13\sigma^*}+\frac{1}{4}\log\frac{\pi}{2}
\end{multline}
But $\E_k$ is has norm at most $r_0$ in 
$\mathscr{H}^\infty_{\frac12\sigma^*}(\Gamma)$ 
and hence $\Sop[\E_k]$ 
is bounded in $\mathscr{H}^\infty_{\sigma'}(\Gamma)$, uniformly for $k$ and $\s$ subject
to the given requirements. Since moreover
$1/L\in \mathscr{H}^\infty_{\sigma^*}(\Gamma)$, we write
\[
\big\lVert \log(\s\,\Sop [\E_k]+L)\big\rVert_{\sigma'}\le
\big\lVert \log L\big\rVert_{\sigma'} 
+ \big\lVert \log(1+\s\, L^{-1}\Sop [\E_k])\big\rVert_{\sigma'}.
\]
Since $\varrho_0^{-1}\ge \n_1\ge \n_0$, we have $\s\in\D(0,\n_0^{-1})$, and hence 
$\lVert \s L^{-1}\Sop[\E_k]\rVert_{\sigma'}\le \frac12$
(see \eqref{eq:n0}).
As a consequence, we may use the elementary fact that $\log(1+t)\le 2t$ for $|t|\le 2$ together with 
Proposition~\ref{prop:S-P-bd} to obtain that
\begin{multline}
\label{eq:est-log-Sop+L}
\big\lVert \log(\s \Sop [\E_k]+L)\big\rVert_{\sigma'}\le
2\big\lVert \log L\big\rVert_{\sigma'} +2|\s|\;\lVert L^{-1}\rVert_{\sigma'}
\big\lVert \Sop [\E_k]\big\rVert_{\sigma'}+2\lVert L\rVert_{\sigma'}
\\ \le C\lVert \E_k\rVert_{\frac12\sigma^*} + 2\lVert L\rVert_{\frac12\sigma^*},
\end{multline}
where we suppress the exact formula for the constant $C$ (it may be found by inspecting
the constant in Proposition~\ref{prop:S-P-bd}). However, it is clear that we may take $C$
to depend only on $\hatR$ and $\sigma^*$.
The claim follows by putting together \eqref{eq:est-d-bar-h} and \eqref{eq:est-log-Sop+L}.
\end{proof}

\section{Truncation of asymptotic expansions}
\label{s:truncation}
In Theorem~\ref{thm:main}, we obtained an asymptotic formula for the planar orthogonal polynomials
of the form
\[
P=\n^{\frac14}(\phi')^{\frac12}\phi^n e^{h^\approx+\imag (h^\approx)^* + m\mathcal{Q}}
\Big(1+\Ordo(\e^{-c_0\sqrt{\n}})\Big), 
\]
where the error terms can either be interpreted in a suitable pointwise sense or in the space 
$L^2(\C,\e^{-2\n Q}\diffA)$.
The function $h^\approx=h_k$ is given by \eqref{eq:h-sol} in terms of the approximate solution 
$\E^\approx=\Tope^{k+1}[0]$
to the fixed-point equation $\Tope \E=\E$, where $k=k_\n$ is a large parameter of order $\Ordo(\n^{\frac12})$.
The function $h^\approx$ formally has some asymptotic expansion
\[
h^\approx\sim \hat{h}_{0} + \n^{-1} \hat{h}_{1} + \n^{-2} \hat{h}_2+\ldots,
\]
where the coefficients $\hat{h}_{j}$ for $j$ smaller than the iteration parameter $k$
are stable, meaning they do not change if we increase $\n$ and iterate further.
The goal of this section is to show that we can truncate the asymptotic expansion
at some appropriate $\n$-dependent finite term,
\[
\mathfrak{h}=\hat h_{0} + \n^{-1} \hat h_{1} + \n^{-2} \hat h_2+\ldots + \n^{-k}\hat h_{k}
\] 
and still have
\[
P=\n^{\frac14}(\phi')^{\frac12}\phi^n e^{\mathfrak{h}+\imag \mathfrak{h}^* + m\mathcal{Q}}
\Big(1+\e^{-c_0\sqrt{\n}}\Big),
\]
valid in the same region $\{z:\mathrm{d}_\C(z,\calS_\tau^c)\le \delta\n^{-\frac14}\}$. 

We begin with a lemma concerning the error term in the complex version of Taylor's formula.

\begin{lem}
\label{lem:Taylors-formula}
Let $f$ be a bounded analytic function on the unit disk $\D$. 
Denote by $\mathcal{P}_k f(z)$ the $k$-th
Taylor polynomial of $f(z)$. Then
\[
\left|\mathcal{P}_k f(z)-f(z)\right|\le |z|^{k+1} 
\Big(1+\frac{1}{\pi}\log\frac{1}{1-|z|^2}\Big)\;\lVert f\rVert_{H^\infty(\D)},\qquad z\in\D.
\]
\end{lem}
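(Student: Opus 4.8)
The plan is to represent the Taylor remainder by a Cauchy integral and then reduce the estimate to a sharp bound on the circle integral of the kernel $|e^{i\theta}-z|^{-1}$. First, for $|z|<r<1$, apply Cauchy's formula on $|\zeta|=r$ both to $f(z)$ and to the Taylor coefficients $a_j=\frac{1}{2\pi i}\int_{|\zeta|=r}f(\zeta)\zeta^{-j-1}\,\mathrm d\zeta$, and sum the finite geometric series
\[
\sum_{j=0}^{k}\frac{z^{j}}{\zeta^{j+1}}=\frac{1-(z/\zeta)^{k+1}}{\zeta-z},
\]
which yields the exact identity
\[
f(z)-\mathcal{P}_k f(z)=\frac{z^{k+1}}{2\pi i}\int_{|\zeta|=r}\frac{f(\zeta)}{\zeta^{k+1}(\zeta-z)}\,\mathrm d\zeta .
\]
Estimating the modulus with $|f|\le\lVert f\rVert_{H^\infty(\D)}$ on $|\zeta|=r$ and then letting $r\to1^{-}$ (the integrand is dominated uniformly in $r$ near $1$ for fixed $z$, since $|re^{i\theta}-z|\ge r-|z|$ stays bounded away from $0$) gives
\[
\bigl|f(z)-\mathcal{P}_k f(z)\bigr|\le |z|^{k+1}\,\lVert f\rVert_{H^\infty(\D)}\cdot\frac{1}{2\pi}\int_0^{2\pi}\frac{\mathrm d\theta}{|e^{i\theta}-z|}.
\]

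It then suffices to prove that $\frac{1}{2\pi}\int_0^{2\pi}|e^{i\theta}-z|^{-1}\,\mathrm d\theta\le 1+\frac1\pi\log\frac1{1-|z|^2}$. The key observation is that $|e^{i\theta}-z|^{-1}=|1-\bar z e^{i\theta}|^{-1}=\bigl|(1-\bar z e^{i\theta})^{-1/2}\bigr|^{2}$, where the principal branch is analytic because $1-\bar z e^{i\theta}$ stays in the right half-plane. Expanding in the binomial series $(1-w)^{-1/2}=\sum_{n\ge0}c_n w^{n}$ with $c_n=\binom{2n}{n}4^{-n}>0$ (absolutely and uniformly convergent on the circle since $c_n=\Ordo(n^{-1/2})$ and $|z|<1$), Parseval's identity yields
\[
\frac{1}{2\pi}\int_0^{2\pi}\frac{\mathrm d\theta}{|e^{i\theta}-z|}=\sum_{n\ge0}c_n^{2}\,|z|^{2n}.
\]
Using the elementary central-binomial bound $\binom{2n}{n}\le 4^{n}/\sqrt{\pi n}$ for $n\ge1$ (so that $c_n^{2}\le\frac1{\pi n}$) together with $c_0=1$, we conclude
\[
\sum_{n\ge0}c_n^{2}\,|z|^{2n}\le 1+\frac1\pi\sum_{n\ge1}\frac{|z|^{2n}}{n}=1+\frac1\pi\log\frac1{1-|z|^2},
\]
which is exactly the claimed bound. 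Combining the two chains of inequalities finishes the proof.

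I expect the only delicate point to be the sharp evaluation of the circle integral: naive estimates — for instance invoking Jordan's inequality for $\sin$ after the substitution $\varphi=\theta/2$ — do reproduce the logarithmic singularity $\log\frac{1}{1-|z|}$ but with a constant worse than $\frac1\pi$. The Parseval computation, combined with the precise bound $\binom{2n}{n}\le 4^{n}/\sqrt{\pi n}$ (which follows, e.g., by noting that $a_n:=\binom{2n}{n}\sqrt{\pi n}/4^{n}$ satisfies $a_n/a_{n-1}\ge1$ while $a_n\to1$ by Stirling), is what delivers the stated constant. Everything else — the Cauchy-remainder identity, the passage $r\to1^{-}$, and the term-by-term summation — is routine.
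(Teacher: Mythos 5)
Your proposal is correct and follows essentially the same route as the paper: both derive the Cauchy-remainder identity, reduce to bounding $\int_{\T}|1-z\bar\eta|^{-1}\,\diffs(\eta)$, write the kernel as $|(1-z\bar\eta)^{-1/2}|^2$ and apply Parseval, and then bound the resulting coefficients by $\frac{1}{\pi n}$. Your central-binomial bound $\binom{2n}{n}\le 4^n/\sqrt{\pi n}$ is precisely the paper's inequality $\Gamma(\tfrac12+j)\le\sqrt{j}\,\Gamma(j)$ rewritten via the duplication formula, so the two arguments differ only cosmetically (e.g.\ your explicit $r\to1^-$ limit instead of working directly on $\T$).
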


\begin{proof}
We begin with the Cauchy integral formula
\[
f(z)=\int_{\T}\frac{f(\eta)}{1-z\bar{\eta}}\diffs(\eta),\qquad z\in\D.
\]
The Taylor coefficients are similarly given by
\[
\widehat{f}(j)=\int_{\T}\bar\eta^{j}f(\eta)\diffs(\eta),
\]
so that
\[
\mathcal{P}_kf(z)=\sum_{j=0}^k z^j\int_{\T}\bar\eta^{j}f(\eta)\diffs(\eta).
\]
As a consequence, we get
\[
f(z)-\mathcal{P}_kf(z)=z^{k+1}\int_{\T}\frac{\bar\eta^{k+1}}{1-z\bar{\eta}}f(\eta)\diffs(\eta),
\]
which leads to the estimate
\[
\big|f(z)-\mathcal{P}_kf(z)\big|\le |z|^{k+1}\lVert f\rVert_{H^\infty(\D)}
\int_{\T}\frac{1}{|1-z\bar{\eta}|}\diffs(\eta).
\]
We rewrite the right-hand side integral as
\[
\int_{\T}\frac{1}{|1-z\bar{\eta}|}\diffs(\eta)=
\int_{\T}\big|(1-z\bar{\eta})^{-\frac12}\big|^2\diffs(\eta)=
\sum_{j=0}^\infty\frac{((\frac12)_j)^2}{(j!)^2}|z|^{2j}
=\sum_{j=0}^\infty \frac{\Gamma(\frac12+j)^2}{\Gamma(\frac12)^2\Gamma(j+1)^2}|z|^{2j}.
\]
In view of the estimate
\[
\Gamma\big(\tfrac12+j\big)\le \sqrt{j}\,\Gamma(j),\qquad j= 1,2,3,\ldots
\]
and the identity $\Gamma(\frac12)=\sqrt{\pi}$, we obtain
\[
\int_{\T}\frac{1}{|1-z\bar{\eta}|}\diffs(\eta)\le 1+\frac{1}{\pi}\sum_{j=1}^\infty\frac{|z|^{2j}}{j}
=1+\frac{1}{\pi}\log\frac{1}{1-|z|^2}.
\]
This completes the proof.
\end{proof}

Recall the operator $\Tope=\Tope_\s$ defined in \eqref{eq:Top-def},
and let $\E_k=\E_{k,\s}=\Tope^{k+1}[0]$. 
We recall also from the proof of Theorem~\ref{thm:main-iteration} that there exist
positive constants $\varrho_0=\varrho_0(V,\sigma^*)$
and $r_0=r_0(V,\sigma^*)$ such that $\E_{k}$ 
is holomorphic in $\s\in \D(0,\varrho_0)$ and such that
\begin{equation}
\label{eq:Ek-bd-app}
\lVert \E_j\rVert_{\frac12\sigma^*}\le r_0
\end{equation}
holds for $j=0,1,2,\ldots,k_\n$.

The following result implies Lemma~\ref{lem:truncation-h-prel}, if we specify to
$k'=k=k_\s=\frac12C_2^{-\frac12}|\s|^{-\frac12}$ and restrict to the ring domain 
$\mathcal{N}$.

\begin{lem}\label{lem:truncation}
There exists a positive constant $C_5=C_5(V,\sigma^*)$ such that 
for any $\s\in\D(0, \varrho_0)$ and any positive integer 
$k$ with $k\le |\s|^{-\frac12}C_2^{-\frac12}$ and any positive integer $k'$,
we have that
\[
\big\lVert\mathcal{P}_{k'}h_{k}-h_{k}
\big\rVert_{\frac13\sigma^*}\le C_5 \big(C_2|\s|k^2\big)^{k'+1}.
\]
\end{lem}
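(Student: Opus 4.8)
The plan is to bound the truncation error $\mathcal P_{k'}h_k - h_k$ by viewing $h_k = h_{k,\s}$ as a bounded analytic function of the complex parameter $\s$ on the disk $\D(0,\varrho_0)$, and then applying the quantitative Taylor estimate of Lemma~\ref{lem:Taylors-formula}. The key point is that the only $\s$-dependence in the formula \eqref{eq:def-hkn} for $h_k$ enters through $L = \coeffL_0+\s\coeffL_1$ and through $\E_k = \Tope_\s^{k+1}[0]$, both of which depend holomorphically on $\s$; the operators $\Pop_\Omega$ and the building blocks of $\Sop_\s$ are polynomial in $\s$, and the outer composition with $\log$ is analytic provided the argument stays away from the negative reals. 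So $\s\mapsto h_{k,\s}$ is an $\mathscr H^\infty_{\frac13\sigma^*}(\Gamma)$-valued holomorphic function on $\D(0,\varrho_0)$, and what remains is to estimate its sup-norm on that disk uniformly in $k$ (subject to $k\le C_2^{-1/2}|\s|^{-1/2}$), so that Lemma~\ref{lem:Taylors-formula} converts analyticity into decay of Taylor tails.

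First I would record that, by \eqref{eq:Ek-bd-app} (equivalently \eqref{eq:Ek-bd} in the proof of Theorem~\ref{thm:main-iteration}), the iterate $\E_k$ lies in the ball of radius $r_0$ in $\mathscr H^\infty_{\frac12\sigma^*}(\Gamma)$ for all admissible $k$ and all $\s\in\D(0,\varrho_0)$. Combining this with the operator bound of Proposition~\ref{prop:S-P-bd} on a slightly smaller scale, and with $\|1/L\|_{\sigma}$ bounded by Proposition~\ref{prop:L-bd}, one gets $\|\log(L+\s\Sop[\E_k])\|_{\sigma'} \le C$ on an intermediate $\sigma'$ between $\frac13\sigma^*$ and $\frac12\sigma^*$ — exactly the estimate \eqref{eq:est-log-Sop+L} already carried out in the proof of Proposition~\ref{prop:bdd-coeff}. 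Applying the Poisson bound \eqref{eq:sigma-bound-P} and shrinking the scale once more then gives $\|h_{k,\s}\|_{\frac13\sigma^*}\le C_5'$, a bound depending only on $V$, $\phi$, and $\sigma^*$, uniform in $\s\in\D(0,\varrho_0)$ and in admissible $k$.

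Next I would invoke Lemma~\ref{lem:Taylors-formula}, applied to $\s\mapsto h_{k,\s}$ after rescaling $\s$ to the unit disk. At a point $\s$ with $|\s|\le C_2^{-1/2}|\s|^{-1/2}$ inverted, i.e.\ using $|\s|^{1/2}\le C_2^{-1/2}k^{-1}$, the rescaled variable has modulus $|\s|/\varrho_0$, and the Taylor remainder is bounded by $(|\s|/\varrho_0)^{k'+1}$ times a logarithmic factor times the uniform norm $C_5'$. Since on the relevant range $|\s|\le C_2 k^{-2}$ (coming from $k\le C_2^{-1/2}|\s|^{-1/2}$), one has $|\s| \le C_2|\s|k^2$ up to constants, one can absorb $\varrho_0$, the logarithmic factor, and $C_5'$ into a single constant $C_5=C_5(V,\sigma^*)$ and conclude $\|\mathcal P_{k'}h_k - h_k\|_{\frac13\sigma^*}\le C_5(C_2|\s|k^2)^{k'+1}$, as claimed; note the logarithmic factor $1+\frac1\pi\log\frac1{1-|\s/\varrho_0|^2}$ is harmless since $|\s|$ is bounded away from $\varrho_0$ in the regime of interest (or, if not, it is dominated by a geometric loss absorbed into lowering the base constant).

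The main obstacle is the holomorphy-and-uniform-bound step: one must be sure that the composition $\E_k=\Tope_\s^{k+1}[0]$ genuinely depends holomorphically on $\s$ as an $\mathscr H^\infty_\sigma(\Gamma)$-valued map, which requires that at each of the $k+1$ iterations the argument of the logarithm inside $\Tope_\s$ stays in a domain where $\log$ is analytic — this is precisely the positivity $L+\s\Sop[\E]>0$ on $\Gamma$ used in \S\ref{s:sol-master}, now needed for all $\s$ in a full complex disk rather than just small real $\s$; the estimates of \S\ref{s:est-Top}, together with $\s\in\D(0,\varrho_0)\subset\D(0,\n_1^{-1})$, already guarantee $\|\s L^{-1}\Sop[\E_k]\|_\sigma\le\tfrac12$, which keeps the argument in $\{\Re>0\}$ and makes the composition well-defined and analytic, so in fact this obstacle is dispatched by quoting the bounds already established.
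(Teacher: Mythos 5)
Your overall strategy — establish a uniform $\mathscr{H}^\infty_{\frac13\sigma^*}(\Gamma)$-bound on $h_k$ via the Banach-scale machinery, view $h_k$ as a bounded holomorphic function of the complex parameter $\s$, and then convert analyticity into decay of Taylor tails through Lemma~\ref{lem:Taylors-formula} after rescaling — is exactly the paper's, and your bookkeeping of the uniform bound via \eqref{eq:est-log-Sop+L} and \eqref{eq:Ek-bd-app} is right. The gap is in the rescaling step, and it is a real one.

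You rescale $\s$ from the \emph{fixed} disk $\D(0,\varrho_0)$ to the unit disk, which produces the remainder bound $C'\,(|\s|/\varrho_0)^{k'+1}$. You then claim one can ``absorb $\varrho_0$ ... into a single constant $C_5$.'' That absorption is not legitimate: comparing $(|\s|/\varrho_0)^{k'+1}$ with the target $(C_2|\s|k^2)^{k'+1}$ requires $\varrho_0^{-1}\le C_2 k^2$, and when this fails (e.g.\ $k=1$ with $C_2\varrho_0<1$) the discrepancy is $(\varrho_0 C_2 k^2)^{-(k'+1)}$, which grows \emph{exponentially in $k'$} and so cannot be swept into a $k'$-independent constant. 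The auxiliary inequality you invoke, ``$|\s|\le C_2|\s|k^2$ up to constants,'' is trivially true (it is just $1\le C_2k^2$) and does not address the needed inequality $|\s|/\varrho_0 \le C_2|\s|k^2$. The fix, which is what the paper does, is to rescale to the $k$-\emph{dependent} disk $\D(0,C_2^{-1}k^{-2})$ — this is precisely the disk to which the standing constraint $k\le|\s|^{-1/2}C_2^{-1/2}$ confines $\s$ for fixed $k$. Then the rescaled variable has modulus $C_2|\s|k^2$ on the nose, Lemma~\ref{lem:Taylors-formula} yields a remainder proportional to $(C_2|\s|k^2)^{k'+1}$ directly, and restricting further to $|\s|\le\frac12 C_2^{-1}k^{-2}$ keeps the rescaled modulus $\le\frac12$ so that the logarithmic factor $1+\frac1\pi\log\frac{1}{1-r^2}$ stays bounded without further argument. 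Your worry about holomorphy of $\s\mapsto\E_k$ is well placed but is already handled by the positivity bound $\|\s L^{-1}\Sop[\E_k]\|\le\frac12$, as you observe.
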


\begin{proof}
By repeating the proof of Proposition~\ref{prop:bdd-coeff}, in particular
by \eqref{eq:est-log-Sop+L}, we have
\[
\lVert h_{k}\rVert_{\frac13\sigma^*}\le 
C|\s|\,\lVert \E_k\rVert_{\frac12\sigma^*}+
2\lVert L\rVert_{\frac{1}{2}\sigma^*},
\]
where $C=C(\hatR,\phi,\sigma^*)$ is a positive constant.
By the bound \eqref{eq:Ek-bd-app} we have that $\E_k$ has norm
at most $r_0$ in $\mathscr{H}^\infty_{\frac12\sigma^*}(\Gamma)$,
whence
\[
\lVert h_{k}\rVert_{\frac13\sigma^*}\le C|\s| r_0
+2\sup_{|\s|\in\D(0,\varrho_0)}\lVert L\rVert_{\frac12\sigma^*}\le C_5
\]
for some constant $C_5=C_5(\hatR,\sigma^*)$ when $k$ and $\s$ satisfy
the given requirements.
In other words, as a function of $(z,\s)$, $h_{k}$
belongs to the tensor product space 
\[
\mathscr{H}^\infty_{\frac13 \sigma^*}(\Gamma)\otimes H^\infty(\D(0,C_2^{-1} k^{-2})),
\]
and has norm at most $C'$. Here, the tensor product space should be understood
as the class of functions $f(z,\s)$ which are holomorphic in $\s\in\D(0,\epsilon_1 k^{-2})$,
such that for fixed $\s$, we have $f(\cdot,\s)\in \mathscr{H}^\infty_\sigma(\Gamma)$.
But then, by Lemma~\ref{lem:Taylors-formula} and a simple scaling argument,
the Taylor polynomial $\mathcal{P}_{k'} h_{k}$ of $h_{k}$
in the variable $\s$ satisfies
\[
\big\lVert\mathcal{P}_{k'} h_{k}-h_{k}\big\rVert_{\frac13\sigma^*}\le 
C_5\big(C_2|\s|k^2\big)^{k'+1},
\]
for $\s$ in the smaller disk $\D(0,\frac12C_2^{-1} k^{-2})$. 
This completes the proof.
\end{proof}

\subsection*{Acknowledgement of funding}
H.\ H.\ acknowledges support from Vetenskapsr{\aa}det (VR grant 2020-03733), the Leverhulme
trust (grant VP1-2020-007), and by grant 075-15-2021-602 of the Government of the
Russian Federation for the state support of scientific research, carried out under the
supervision of leading scientists. 
A.\ W.\ acknowledges support from the Knut and Alice Wallenberg Foundation (grant 017.0389)

\end{document}